\documentclass[12pt]{article}
\usepackage[english]{babel}

\usepackage[letterpaper]{geometry}
\usepackage{pdfpages}
\usepackage{geometry} 
\usepackage{amsmath}  
\usepackage{graphicx} 
\usepackage{float}
\usepackage{color}
\usepackage{amsmath}
\usepackage{amssymb}
\usepackage{amsthm}
\usepackage{array}
\usepackage{xcolor}
\usepackage{float}
\usepackage{bm}

\usepackage{amsmath}
\usepackage{graphicx}
\usepackage[colorlinks=true, allcolors=blue]{hyperref}
\usepackage{caption}
\usepackage{subcaption}

\definecolor{keywordcolor}{rgb}{0.7,0.1,0.1}
\definecolor{commentcolor}{rgb}{0.4,0.4,0.4}
\definecolor{symbolcolor}{rgb}{0.0,0.1,0.6}
\definecolor{sortcolor}{rgb}{0.1,0.5,0.1}

\usepackage{lstlean}

\lstdefinestyle{LeanCode}{
  language=lean,
  basicstyle=\ttfamily\fontsize{8pt}{8pt}\selectfont,
  keywordstyle=[1]{\color{keywordcolor}},
  keywordstyle=[2]{\color{sortcolor}},
  commentstyle=\itshape\color{commentcolor},
  stringstyle=\ttfamily,
  breaklines=true,
  breakatwhitespace=false,
  columns=fullflexible,
  keepspaces=true,
  showstringspaces=false,
  tabsize=2,
  extendedchars=false,
  captionpos=b
}

\usepackage{newunicodechar}
\newunicodechar{⦃}{\ensuremath{\{\!\{}}
\newunicodechar{⦄}{\ensuremath{\}\!\}}}
\newunicodechar{⋂}{\ensuremath{\bigcap}}
\newunicodechar{ₗ}{\ensuremath{_{l}}}
\newunicodechar{ᵢ}{\ensuremath{_{i}}}
\newunicodechar{ℝ}{\ensuremath{\mathbb{R}}}
\newunicodechar{ₜ}{\ensuremath{_{t}}}

\newtheorem{theorem}{Theorem}

\theoremstyle{definition}

\theoremstyle{lemma}
\newtheorem{lemma}[theorem]{Lemma}

\theoremstyle{remark}

\theoremstyle{boldremark}

\newtheorem{corollary}[theorem]{Corollary}

\title{Universal Triangle Covering Curve and Polygonal Chain: Escaping Forest and Fitting Worm}

\author{Zhipeng Deng \thanks{I post this paper to celebrate a truly special, memorable, historical, and proud day when two Chinese mathematicians won the Fields medal!}}

\date{}

\begin{document}
\maketitle

\begin{abstract}
In this paper, we present a general formulation to address the problems of covering curves and polygonal chains with triangle, and fitting these curves into triangle. These problems can be formulated as special cases of Bellman’s lost-in-a-forest problem (escaping triangular forest) and Moser’s worm problem (covered by triangle). We model and reformulate the problem by keeping the curve stationary while allowing the triangle to translate and rotate. Subsequently, we derive the functional minimization formulation with support function constraints to solve. We also prove the equivalence and convergence of the formulas. Finally, we employ numerical methods and present results for covering curves with arbitrary triangles of various angles. We also present some corollaries and variant results, including closed curves and closed polygonal chains. 
\end{abstract}

\noindent \textbf{Keywords:} 
Bellman’s lost-in-a-forest problem, Moser’s worm problem, Universal cover, Discrete geometry.
\\

\noindent \textbf{Classification}

Optimization and Control (math.OC)

Metric Geometry (math.MG)

Discrete Mathematics (cs.DM)

Computational Geometry (cs.CG)

49K30 (Optimal Solutions in Calculus of Variations)

49Q10 (Optimization of Shapes Other Than Minimal Surfaces)

52A40 (Inequalities and Extremum Problems)

\tableofcontents

\section{Introduction}

Bellman’s lost-in-a-forest problem and Moser’s worm problem are challenging, problems in geometry \cite{Finch2004} \cite{Norwood1992}, and they still remain open and unsolved \cite{Brass2005} \cite{Croft2012} . The former was introduced by Bellman in 1956, asking for the shortest path that guarantees escape from a forest of known shape and dimensions, given that the starting position and orientation are unknown \cite{Gross1955}. A decade later, in 1966, Leo Moser proposed the worm problem \cite{Gerriets1974}, which seeks the region of minimum area that can accommodate every continuous planar curve of unit length. 

Regarding Bellman’s lost-in-a-forest problem, rigorously proven optimal escape paths exist for only a severely limited number of highly symmetric forest shapes, such as half plane by Isbell \cite{Isbell1957}, and infinite unit strip by Zalgaller \cite{Zalgaller2005}. For triangular forests, progress has remained fragmented. Besicovitch \cite{Besicovitch1965} provided a zigzag construction yielding an optimal escape path for the equilateral triangle. It was proved in 2006 \cite{Coulton2006}. Numerical bounds for isosceles triangles were later investigated by Gibbs \cite{Gibbs2016} and recently by Temerev \cite{Temerev2026}. For fat convex forest shapes, the diameter is the optimal escape path (Theorem 4 in \cite{Finch2004}). Finch's paper also present the escape path results for regular polygons \cite{Finch2004}. However, no generalized algorithmic solution or mathematical proof for arbitrary triangle, let alone arbitrary convex polygons \cite{Gibbs2016-1}, currently exists in the established literature. In our previous work \cite{Deng2024}, we sought to bridge this gap by proposing a general framework for Bellman’s lost-in-a-forest problem. By reformulating the escape path as a Traveling Salesman Problem with Neighborhoods (TSPN), we also established a proof for the general solution \cite{Deng2026}. Furthermore, we applied this approach to the classic unit strip \cite{Deng2026-1} and yielded the same solution as Zalgaller \cite{Zalgaller2005}.

As for Moser’s Worm problem, classified as a universal curve cover problem \cite{Wetzel2003} \cite{Brass2005} , it has been notably discussed by Finch that the worm problem is dual to the forest problem \cite{Finch2004}. Consequently, research into Moser’s worm has heavily relied on constructing and refining planar covers based directly on the escape paths derived from forest problem, such as Zalgaller’s broadworm and Besicovitch’s zigzag. This approach has led to incremental tightening of the bounds with the area currently known to be bounded below by $0.2322$ and above by $0.2604$ \cite{Poole1973} \cite{Adhikari1989} \cite{Norwood1992} \cite{Norwood2003} \cite{Johnson2004} \cite{Wang2006} \cite{Wetzel2013} \cite{Khandhawit2013} \cite{Movshovich2017} \cite{Wetzel2019} following recent advancements \cite{Movshovich2025}
\cite{Wichiramala2026}. However, earlier studies relied almost on classical geometric constructions for certain shape. 

In our previous works, we explored escape paths of one line \cite{Deng2024}, two lines \cite{Deng2026}, and unit strip \cite{Deng2026-1}. Therefore, this paper will explore three lines, i.e., arbitrary triangle. The contribution of this paper lies in applying the general solution method which we previously proposed for escaping from arbitrary triangular forest and fitting a worm into a triangle. We reformulated the problem as a constrained functional minimization problem by using support function. Unlike the TSPN mixed integer programming approach, this formulation allows for direct discretization and subsequent solutions, offering a simpler, more convenient method. We prove the equivalence and convergence of the formulation. We also extend the methods to find polygonal chains escaping paths of arbitrary triangle, and fitting open and closed polygonal chains into triangles. Finally, we employ numerical methods and present results regarding the escape from arbitrary triangles of various angles and the covering of curves with arbitrary triangles. In particular, we provide novel formulas and obtain numerical results for non-isosceles arbitrary triangles, which was not presented in any previous literature.

\section{Derivation and proof of formulas}

\subsection{Proof of concept}

\begin{figure}[H]
    \centering
    \includegraphics[width=1\linewidth]{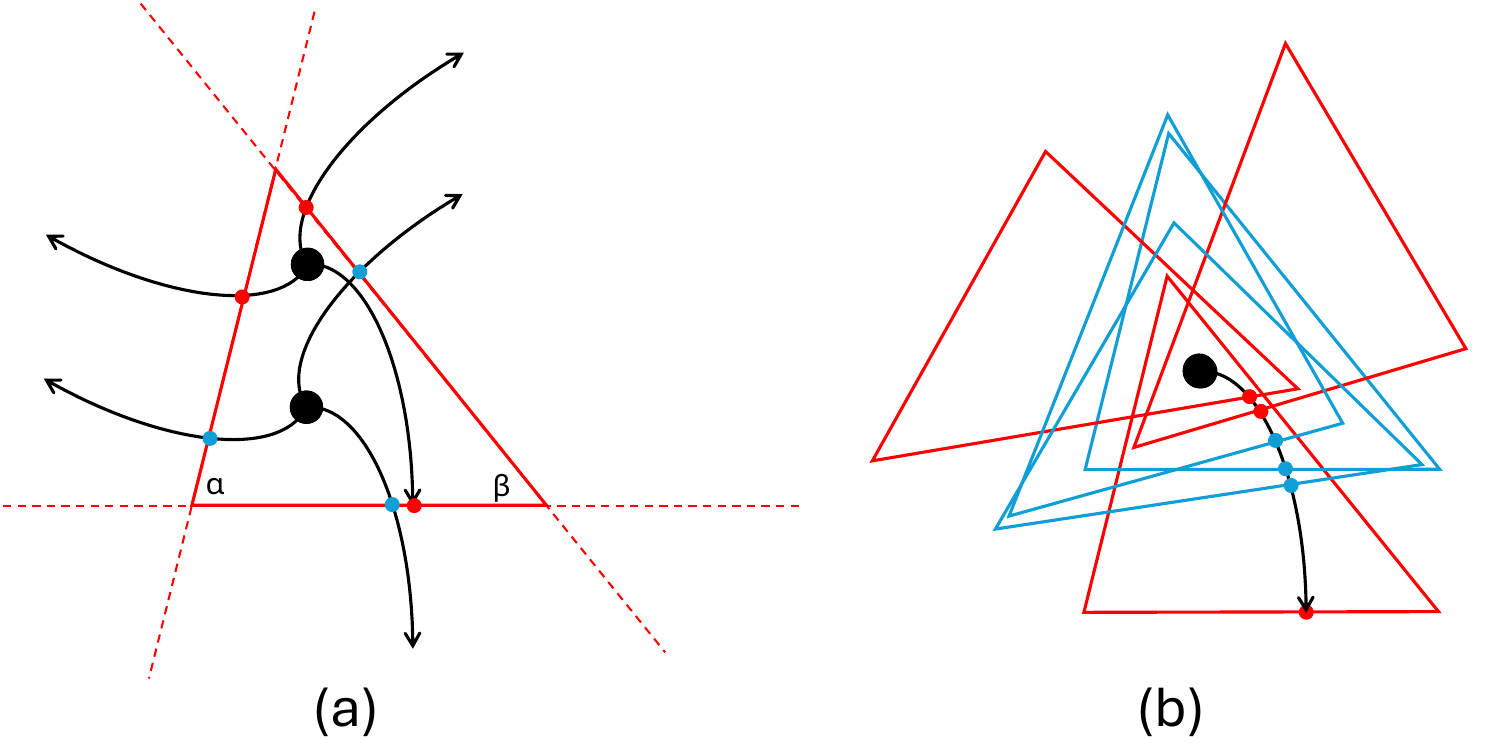}
    \caption{Proof of concept for escaping from arbitrary triangle forest, and fitting worm into triangle}
\end{figure}

Figure 1 shows the proof of concept. We transform the escape path from the triangle across various orientations and locations into a path which is stationary by the translation and rotation of the triangle. We establish that for any translation or rotation of the triangle, the escape path must pass through an escape point on the triangle. We have established and proved this transformation in previous papers \cite{Deng2024} \cite{Deng2026}. In this way, we can convert the discretization lost-in-a-triangular-forest problem as equivalent Traveling Salesperson Problem with Neighborhoods (TSPN). Furthermore, due to the duality between the forest problem and the worm problem—based on Theorem 3 from \cite{Finch2004}, the shortest path to escape a triangular forest corresponds to a worm that can be covered by triangles. Thus, we can also obtain the solution for the worm problem.

Without loss of generality, let us assume a triangle with a base of length 1 on x-axis joining $(0,0)$ and $(1,0)$. Assume $0 < \alpha < \pi, 0 < \beta < \pi, \alpha + \beta < \pi.$ as Figure 1 (a) shows. The third vertex lies in the upper half-plane. 
The equations of the lines forming the three sides are:
\[y=0\]
\[y=x\tan\alpha \]
\[y=(x - 1)\tan(-\beta) \]

The normalized half-space equations are
$
\mathbf{n}_1 = (-\sin\alpha, \cos\alpha), \mathbf{n}_2 = (\sin\beta, \cos\beta), \mathbf{n}_3 = (0, -1),
$
and
$
b_1 = 0, b_2 = \sin\beta, b_3 = 0.
$
Then
$$
\mathcal{T} = \{ \mathbf{x} \in \mathbb{R}^2 : \mathbf{n}_j \cdot \mathbf{x} \le b_j, \ j = 1, 2, 3 \}.
$$
The domain $\mathcal{T}$ is an arbitrary triangle bounded by three lines. 
This is equivalent to ${(x,y)|y \ge 0, y \le x \tan\alpha, y \le (1 - x) \tan\beta}$.

The points within the triangle domain represent the unknown starting positions for the forest problem, with the range of unknown orientations $\theta \in [0, 2\pi)$.

\subsection{Derivation of formula for covering curve}

Based on the proof concept in Fig 1 and Theorem 1 proven in  \cite{Deng2024}
 \cite{Deng2026}, the problem can be reformulated by fixing the escape path starting point (assuming the origin) and considering the intersection of the path with the translating and rotating forest boundaries, which corresponds to the three lines of triangle. Let $\mathbf{s}_i = (\xi_i, \eta_i) \in \mathcal{T}$ be a possible starting point, let $R_\theta$ denote counterclockwise rotation through $\theta$, and write the physical path as

$$ \mathbf{s}_i + R_\theta \mathbf{q}. $$

In path-fixed coordinates, the translated and rotated triangle is

$$ Q(\mathbf{s}_i, \theta) = R_{-\theta}(\mathcal{T} - \mathbf{s}_i) = \{ \mathbf{q} : \mathbf{u}_j(\theta) \cdot \mathbf{q} \le d_j(\mathbf{s}_i), \ j = 1, 2, 3 \} \, , $$

where $ \mathbf{u}_j(\theta) = R_{-\theta}\mathbf{n}_j, d_j(\mathbf{s}_i) = b_j - \mathbf{n}_j \cdot \mathbf{s}_i \ge 0. $

Consequently, its three supporting lines are

\[\mathbf u_j(\theta)\cdot\mathbf q=d_j(\mathbf s_i), j=1,2,3. \]

Explicitly, they are 

\[ x_i\tan \alpha  -y_i+x (\tan \alpha  \cos \theta +\sin \theta )+y (\tan \alpha  \sin \theta -\cos \theta )=0    \]
\[(\tan (-\beta)  \cos \theta +\sin \theta ) x+(\tan (-\beta) \sin \theta-\cos \theta ) y+\left(x_i\tan (-\beta)  -y_i-\tan (-\beta
   )\right)=0     \]
\[  -x\sin \theta  +y\cos \theta  +y_i=0   \]

Thus, based on the TSPN construction from our previous paper \cite{Deng2024}, the general framework for the discretized lost-in-a-triangular-forest problem is follows. Let $\mathcal{A}$ be a discrete uniform grid of points over domain $\mathcal{T} \subset \mathbb{R}^2$. Let the grid spacing approach zero, yielding $M$ interior points. Let $\Theta$ be a discrete set of $N$ angles uniformly distributed in $[0, 2\pi)$. The original discrete optimization problem seeks an ordered sequence of vertices, defined by $\{a_1, a_2, \ldots, a_{MN}\} \in S_{MN}$, a permutation group \cite{Deng2024}, that minimizes the open sequence's total Euclidean variation subject to a product of algebraic intersection constraints for all rotated angles. The product constraint places the selected visit point on at least one supporting line of the transformed triangle. Because the polygonal path starts at the origin, which lies in the transformed triangle, any continuous route reaching such a point must intersect the actual triangle boundary, possibly before reaching that selected point.
\begin{equation}
\begin{aligned}
    \text{minimize}_{a \in S_{MN}} \quad & \sqrt{x_{a_1}^2 + y_{a_1}^2} + \sum_{i=2}^{MN} \sqrt{(x_{a_i} - x_{a_{i-1}})^2 + (y_{a_i} - y_{a_{i-1}})^2}, \\
    \text{s.t.} \quad & [x_i\tan \alpha - y_i + x_{a_h} (\tan \alpha \cos \theta + \sin \theta) + y_{a_h} (\tan \alpha \sin \theta - \cos \theta)] \\
    & [(\tan (-\beta) \cos \theta + \sin \theta) x_{a_h} + (\tan (-\beta) \sin \theta - \cos \theta) y_{a_h} \\
    & \quad + \left(x_i\tan (-\beta) - y_i - \tan (-\beta)\right)] \\
    & [-x_{a_h}\sin \theta + y_{a_h}\cos \theta + y_i] = 0, \\
    & \forall i \in \{1, \dots, M\},\forall k \in \{1, \dots, N\}, \exists h \in \{1, \dots, MN\},
\end{aligned}
\end{equation}
As the grid densities $M, N \to \infty$, the uncertainty grids become dense, the finite scenario constraints approximate the robust requirement over all starting positions and orientations. The bounded-length polygonal apths are compact after arc-length reparameterization, and subsequences converge uniformly to rectifiable continuous paths\cite{Deng2026}. We parameterize the sequence of coordinates $\{ (x_{a_i}, y_{a_i}) \}$ as sample points of a continuous, vector-valued function $\mathbf{r}(p) = (x(p), y(p))$, where $p \in [0, 2\pi]$ represents path parameter for arc length. 

The summation in Eq (1) evaluates the sequence from $i=2$ to $MN$, the topology of $\mathbf{r}(p)$ is strictly defined as open curve. Under uniform convergence together with mesh refinement, polygonal interpolation provides a recovery sequence whose lengths converge to the length of the limiting rectifiable curve \cite{Deng2026}. By Riemann sum, the limit converges to the total arc length. The discrete objective becomes continuous functional:
\begin{equation}
    J[\mathbf{r}] = \|\mathbf{r}(0)\| + \int_{0}^{2\pi} \|\mathbf{r}'(p)\| \, dp.
\end{equation}

The combinatorial constraint in Eq (1) requires that for any rotation angle $\theta_k$, the sequence must intersect one of the three lines forming the triangle boundary of $\mathcal{T}$. We map this to a continuous projection.

Let $L_1$, $L_2$, and $L_3$ denote the lengths of the left, right, and bottom edges. We define the bottom edge $L_3 = 1$. The other edge lengths are
\begin{equation}
    L_1 = \frac{\sin\beta}{\sin(\alpha+\beta)}, \quad L_2 = \frac{\sin\alpha}{\sin(\alpha+\beta)}.
\end{equation}

In the continuum limit, the discrete rotations $\theta_k$ become a continuous phase $t \in [0, 2\pi)$. The outward normal vectors of $\mathcal{T}$, parameterized by rotation $t$ occur at the projection angles:
\begin{equation}
    \phi_1(t) = t + \pi + \alpha, \quad \phi_2(t) = t + \pi - \beta, \quad \phi_3(t) = t.
\end{equation}
The orthogonal distances from the origin to the boundaries are $p_1 = 0$, $p_3 = 0$, and $p_2 = \sin\beta$.

We define the support function \cite{Ball2019} $h(\phi)$ used before in infinite unit strip in \cite{Deng2026-1} for continuous curve $\mathbf{r}(p)$:
\begin{equation}
    h(\phi) = \max_{p \in [0,2\pi]} \left( \mathbf{r}(p) \cdot \hat{\mathbf{n}}(\phi) \right),
\end{equation}
where $\hat{\mathbf{n}}(\phi) = (\cos\phi, \sin\phi)$. 

With support functions of $\mathbf{r}(p)$, its projection onto any vector $\hat{\mathbf{n}}(\phi)$ yields a continuous scalar mapping. By the Intermediate Value Theorem, it forms a connected, gapless interval corresponding identically to the shadow of the curve's convex hull. Hence, if the transformed triangle is $ Q = \{ \mathbf{q} : \mathbf{u}_j \cdot \mathbf{q} \le d_j, \ j = 1, 2, 3 \}, d_j \ge 0, $ and the path begins at $\mathbf{0} \in Q$, then $ h_\Gamma(\mathbf{u}_j) \ge d_j $ implies that the path reaches a point outside the interior of the $j$-th half-space. By continuity, the path algebraically guarantees to intersect boundary lines $\partial Q$.

Thus, the edge-length weights arise from the equilibrium identity $ L_1\mathbf{n}_1 + L_2\mathbf{n}_2 + L_3\mathbf{n}_3 = \mathbf{0}.$ For every starting point $\mathbf{s} \in \mathcal{T}$, the side slacks satisfy
\[L_1d_1(\mathbf s)+L_2d_2(\mathbf s)+L_3d_3(\mathbf s) = \frac{\sin\alpha\sin\beta}{\sin(\alpha+\beta)} = 2\,\operatorname{Area}(\mathcal T). \]

Thus, the support functions must satisfy the linear combination of edge lengths and domain offsets
\begin{equation}
    L_1 h(\phi_1(t)) + L_2 h(\phi_2(t)) + L_3 h(\phi_3(t)) \ge L_1 p_1 + L_2 p_2 + L_3 p_3.
\end{equation}
Substituting lengths and distances, we obtain the constraint
\begin{equation}
    \sin\beta \, h(t+\pi+\alpha) + \sin\alpha \, h(t+\pi-\beta) + \sin(\alpha+\beta) \, h(t) \ge \sin\alpha \sin\beta.
\end{equation}

Synthesizing the arc length functional Eq (2) with the support constraints Eq (7), we completely eliminate the combinatorial complexity constraints in Eq (1). The globally generalized continuous optimization formulation becomes
\begin{equation}
\begin{aligned}
    \text{minimize}_{\mathbf{r}(p)} \quad & \|\mathbf{r}(0)\| + \int_{0}^{2\pi} \|\mathbf{r}'(p)\| \, dp \\
    \text{s.t.} \quad & \sin\beta \, h(t + \pi + \alpha) + \sin\alpha \, h(t + \pi - \beta) + \sin(\alpha + \beta) \, h(t) \ge \sin\alpha \sin\beta, \\
    & h(t) \ge 0, \forall t \in [0, 2\pi)
\end{aligned}
\end{equation}
where $\mathbf{r}(p)$ is an open curve. 

Eq (1) is the general discrete optimization formulation. As we used support function, Eq (8) can be used for every continuous rectifiable path beginning at the origin, including nonconvex curves and nonconvex polygonal chains. This variational architecture allows arbitrary topological scaling, as this formulation removes the explicit permutation and position variables in Eq (1). But the support constraints remain nonsmooth with sums of maxima of functions. Numerical global optimality requires a certified global solver, or matching lower and upper bounds. 

With finite difference method for $[0, 2\pi)$ with $M$ parts in Eq (8), a piecewise-linear path discretization together with angular collocation gives the following finite-dimensional relaxation. Enforcing the constraint at $ t_j = \frac{2\pi j}{M} $, the formula is

\begin{equation}
\begin{aligned}
\text{minimize}_{\{x_i,y_i\}_{i=0}^{N}} 
& \sqrt{x_0^2+y_0^2} +\sum_{i=0}^{N-1}
\sqrt{(x_{i+1}-x_i)^2+(y_{i+1}-y_i)^2}\\
\mathrm{s.t.}& \sin\beta\, \max_{0\le i\le N} \left[
x_i\cos(t_j+\pi+\alpha) +y_i\sin(t_j+\pi+\alpha) \right] \\
&+\sin\alpha\, \max_{0\le i\le N} \left[ x_i\cos(t_j+\pi-\beta)
+y_i\sin(t_j+\pi-\beta) \right] \\
&+\sin(\alpha+\beta)\, \max_{0\le i\le N} \left[ x_i\cos t_j+y_i\sin t_j
\right] \ge \sin\alpha\sin\beta\\
& \max_{0\le i\le N} \left[ x_i\cos t_j+y_i\sin t_j
\right]\ge 0, j=0,\ldots,M-1,\\
& t_j=\frac{2\pi j}{M}
\end{aligned}
\end{equation}
Consequently, the collocation problems converge as $M \to \infty$. This finite difference form can be solved highly efficiently via global optimization algorithm.

\begin{corollary}[Isosceles triangle covering curves]
From Eq (8), the continuous optimization for escaping isosceles triangle with only one base angle $\alpha$ is
\begin{equation}
\begin{aligned}
    \text{minimize}_{\mathbf{r}(p)} \quad &  \|\mathbf{r}(0)\| + \int_{0}^{2\pi} \|\mathbf{r}'(p)\| \, dp \\
    \text{s.t.} \quad & h(t + \pi + \alpha) + h(t + \pi - \alpha) + 2\cos\alpha \, h(t) \ge \sin\alpha, \quad \forall t \in [0, 2\pi), \\
    & h(t) \ge 0, \forall t \in [0, 2\pi)
\end{aligned}
\end{equation}

Recently, Temerev's paper \cite{Temerev2026} also obtained the same constraints.
\end{corollary}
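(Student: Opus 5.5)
The corollary should follow from Eq (8) by specialization, so the plan is to substitute $\beta=\alpha$ and simplify. For an isosceles triangle on the unit base from $(0,0)$ to $(1,0)$, the base angles coincide. Setting $\beta=\alpha$ in the constraint of Eq (8) gives
\[
\sin\alpha\, h(t+\pi+\alpha)+\sin\alpha\, h(t+\pi-\alpha)+\sin(2\alpha)\, h(t)\ge \sin^2\alpha .
\]
The objective functional and the nonnegativity constraint $h(t)\ge 0$ do not depend on $\beta$, so they carry over unchanged.

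Next, divide the displayed inequality by $\sin\alpha$. This is legitimate because $0<\alpha<\pi/2$ here: indeed $\alpha+\beta=2\alpha<\pi$ forces it, so $\sin\alpha>0$ and the direction of the inequality is preserved. Using $\sin 2\alpha=2\sin\alpha\cos\alpha$, the constraint becomes
\[
h(t+\pi+\alpha)+h(t+\pi-\alpha)+2\cos\alpha\,h(t)\ge \sin\alpha ,
\]
which is exactly the constraint in the corollary.

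As a consistency check, I will recompute the weights directly from the geometry. With $\beta=\alpha$, Eq (3) gives $L_1=L_2=\frac{1}{2\cos\alpha}$ and $L_3=1$. The offsets are $p_1=p_3=0$ and $p_2=\sin\alpha$. Multiplying Eq (6) through by $2\cos\alpha$ reproduces the same inequality. This confirms that no normalization was lost and that the right-hand side equals $2\,\mathrm{Area}(\mathcal T)\cdot 2\cos\alpha=\sin\alpha$, in agreement with the equilibrium identity $L_1\mathbf n_1+L_2\mathbf n_2+L_3\mathbf n_3=\mathbf 0$ specialized to $\beta=\alpha$.

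There is no real obstacle here. The only points needing care are the positivity of $\sin\alpha$, which justifies the division, and checking that the phase angles $\phi_1=t+\pi+\alpha$ and $\phi_2=t+\pi-\alpha$ from Eq (4) specialize correctly. Every validity statement established for Eq (8), in particular its applicability to arbitrary rectifiable open paths starting at the origin, is inherited directly, since Eq (10) is simply the $\beta=\alpha$ instance of Eq (8).
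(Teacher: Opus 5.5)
Your proposal is correct and follows the same route the paper intends: set $\beta=\alpha$ in Eq (8), divide by $\sin\alpha>0$ (justified since $2\alpha<\pi$), and use $\sin 2\alpha=2\sin\alpha\cos\alpha$. Your extra consistency check through Eq (6), with $L_1=L_2=1/(2\cos\alpha)$, is also correct, though not needed.
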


\begin{corollary}[Triangle covering closed curve]
If the curve is closed, then based on the formula from our previous paper \cite{Deng2026}, we simply add a term to the objective function representing the distance between the curve's end and origin, or add a constraint requiring the end and start points to be identical. The formula for closed curve similar to Eq (8) is
\begin{equation}
\begin{aligned}
    \text{minimize}_{\mathbf{r}(p)} \quad &  \|\mathbf{r}(0)\| + \int_{0}^{2\pi} \|\mathbf{r}'(p)\| \, dp+\|\mathbf{r}(2\pi)\| \\
    \text{s.t.} \quad & \sin\beta \, h(t + \pi + \alpha) + \sin\alpha \, h(t + \pi - \beta) + \sin(\alpha + \beta) \, h(t) \ge \sin\alpha \sin\beta, \\
    & h(t) \ge 0,  \forall t \in [0, 2\pi),
\end{aligned}
\end{equation}

\end{corollary}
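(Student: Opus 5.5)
The plan is to derive Eq (11) from Eq (8) by the same reduction used there, changing only the topology of the admissible curves. The constraints are unchanged and the objective gains one closing term.

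\textbf{Step 1: the constraint set is unchanged.} The constraints of Eq (8) depend on $\mathbf{r}$ only through its support function $h(\phi)=\max_p \mathbf{r}(p)\cdot\hat{\mathbf{n}}(\phi)$. This is the support function of the convex hull of the trace of $\mathbf{r}$, and closing the curve does not change that trace. The sufficiency argument before Eq (8) still applies. For every placement $(\mathbf{s},\theta)$, the combination
\[
L_1h(\phi_1)+L_2h(\phi_2)+L_3h(\phi_3)\ \ge\ L_1d_1+L_2d_2+L_3d_3=2\,\mathrm{Area}(\mathcal T)
\]
together with $h\ge 0$ and the equilibrium identity forces $h(\mathbf{u}_j)\ge d_j$ for some $j$. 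By the intermediate value argument, the connected curve starting at $\mathbf{0}\in Q$ then meets $\partial Q$. Connectedness and the presence of the origin are the only properties used, and both hold for closed curves. So the feasible set in Eq (11) is exactly the feasible set of Eq (8), restricted to curves that return to the origin.

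\textbf{Step 2: closure through the objective.} Following the closed-path formulation of \cite{Deng2026}, I would impose closure in one of two equivalent ways. The first is the hard constraint $\mathbf{r}(2\pi)=\mathbf{r}(0)$, with the start joined to the origin. The second is to keep $\mathbf{r}$ open and add the return leg $\|\mathbf{r}(2\pi)\|$ to the cost. Any feasible open $\mathbf{r}$ of cost $\|\mathbf{r}(0)\|+\int\|\mathbf{r}'\|+\|\mathbf{r}(2\pi)\|$ gives a closed loop $\mathbf{0}\to\mathbf{r}(0)\to\cdots\to\mathbf{r}(2\pi)\to\mathbf{0}$ of exactly that length. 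The added straight segments only enlarge the trace, so $h$ can only increase and feasibility is preserved. Conversely, a closed loop through $\mathbf{0}$, parameterized starting from $\mathbf{0}$, has zero end terms, and its cost in Eq (11) equals its length. The two infima therefore coincide.

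\textbf{Step 3: discretization.} Discretization and convergence then follow verbatim from the argument for Eq (8). Taking the polygonal collocation of Eq (9) with the extra term $\sqrt{x_N^2+y_N^2}$, the bounded-length closed polygons are compact under uniform convergence after arc-length reparameterization. Length is lower semicontinuous, and polygonal interpolation provides recovery sequences. The sampled constraints at $t_j$ pass to the limit by continuity of $h$ in $\phi$ and uniform convergence of the support functions.

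\textbf{Main obstacle.} I expect the hardest step to be justifying that the single aggregated inequality is equivalent to, and not merely a consequence of, the per-placement requirement ``$h(\mathbf{u}_j)\ge d_j$ for some $j$''. This concerns the constraint set, not the closing term. Since Eq (8) is assumed established earlier, I would simply invoke it rather than reprove it.
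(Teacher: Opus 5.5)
Your proposal is correct and takes the same route as the paper. The paper only asserts in one line that Eq (8)'s support-function constraints carry over unchanged and that closure is imposed by the return-leg term $\|\mathbf{r}(2\pi)\|$ or by the endpoint constraint; your Steps 1--3 supply the justification for this, invoking the exact support characterization (Theorem 4) for the constraint set. One small precision: closing the curve does add the return segment to the trace, contrary to your Step 1. What stays unchanged is the support function, because $h\ge 0$ already places the origin in the convex hull of the trace, so the open curve and the closed loop have the same $h$.
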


\subsection{Derivation of formula for covering polygonal chain}

The previous Eq (8) yields results for any curve. Many previous studies found the optimal path with polylines, particularly non-convex zigzag polylines in many cases like equilateral triangle \cite{Besicovitch1965} and some isosceles triangles with large base angle \cite{Gibbs2016}. Especially, the cases with polygonal chains have been previously studied by John Wetzel and others \cite{Sroysang2008} \cite{Füredi2011} \cite{Panraksa2021}. The purpose of this subsection is therefore not to derive different coverage condition, but to restrict the continuous variational problem to the finite-dimensional class of polygonal chain paths. Thus, we derive formula for polygonal path curves. 

First, to clarify the correspondence, we list the forest and worm problems corresponding to different path curves (worms) in Table 1.

\begin{table}[h!]
\centering
\begin{tabular}{|p{7cm}|p{7cm}|}
\hline
\textbf{Description of forest problem} & \textbf{Description of corresponding worm problem} \\ \hline
Lost in a forest of any shape with shortest escape path & Covering worm with any shape \\ \hline
Lost in arbitrary triangle forest with shortest escape path & Covering worm with arbitrary triangle \\ \hline
Lost in arbitrary triangle forest with shortest polygonal chain escape path & Covering polygonal chain with arbitrary \\ \hline
Lost in triangle with shortest closed polygonal chain escape path & Covering a closed polygonal chain with triangle \\ \hline
\end{tabular}
\caption{Description of forest problem and corresponding worm problem}
\end{table}

Then for finite polygonal chain, let $\mathbf{r}_i=(x_i,y_i)\in\mathbb{R}^2,
i=1,\ldots,K$ denote the vertices of an open polygonal curve. Including the initial connection from the origin to the first vertex, the polygonal path is
$
\mathbf{0}\rightarrow \mathbf{r}_1
\rightarrow \mathbf{r}_2
\rightarrow \cdots
\rightarrow \mathbf{r}_K
$.

For a prescribed angular discretization parameter \(M\), we define
\[
t_j=\frac{2\pi j}{M},
j=0,1,\ldots,M-1
\]
The discrete support function generated by the polygonal vertices is
\[
h_K(\theta)
=
\max_{1\leq i\leq K}
\left\{
x_i\cos\theta+y_i\sin\theta
\right\}
=
\max_{1\leq i\leq K}
\left\{
\mathbf{r}_i\cdot
\begin{pmatrix}
\cos\theta\\
\sin\theta
\end{pmatrix}
\right\}.
\]

The finite-dimensional optimization of polygonal chain is
\begin{equation}
\begin{aligned}
\text{minimize}_{\{x_i,y_i\}_{i=1}^{K}}
&\sqrt{x_1^2+y_1^2}+\sum_{i=2}^{K}\sqrt{(x_i-x_{i-1})^2+(y_i-y_{i-1})^2}\\
\mathrm{s.t.} &\sin\beta\,h_K(t_j+\pi+\alpha)+
\sin\alpha\,h_K(t_j+\pi-\beta)+\sin(\alpha+\beta)\,h_K(t_j)\\
&\geq\sin\alpha\,\sin\beta,\forall j=0,\ldots,M-1,\\
& h_K(t_j)\geq 0, \forall j=0,\ldots,M-1.
\end{aligned}
\end{equation}

As \(K\to\infty\) with more segments, polygonal interpolation is dense in the class of rectifiable continuous paths under uniform convergence. A small dilation restores exact support feasibility when interpolation produces an $o(1)$ constraint deficit. Hence the optimal polygonal values converge to the optimal continuous value!

\begin{corollary}[Covering closed polygonal chain]
If the polygonal curve is closed, then we simply add a term to the objective function representing the distance between the curve's end and origin. The curve becomes a closed polygonal chain based at the origin. The formula is
\begin{equation}
\begin{aligned}
\text{minimize}_{\{x_i,y_i\}_{i=1}^{K}}
&
\sqrt{x_1^2+y_1^2}+\sum_{i=2}^{K}\sqrt{(x_i-x_{i-1})^2+(y_i-y_{i-1})^2}
+\sqrt{x_K^2+y_K^2}\\
\mathrm{s.t.} &\sin\beta\,h_K(t_j+\pi+\alpha)+
\sin\alpha\,h_K(t_j+\pi-\beta)+\sin(\alpha+\beta)\,h_K(t_j)\\
&\geq\sin\alpha\,\sin\beta,\forall j=0,\ldots,M-1,\\
& h_K(t_j)\geq 0, \forall j=0,\ldots,M-1.
\end{aligned}
\end{equation}
\end{corollary}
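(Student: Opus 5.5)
We will prove that the formulation for closed polygonal chains is obtained from Eq (12) by the same argument that gave Eq (11) from Eq (8). That is, closing the chain only changes the objective and leaves the coverage constraints untouched. The work splits into two parts: (i) the constraints of Eq (12) remain necessary and sufficient for escape when the chain is closed, and (ii) the objective of the new problem is exactly the length of the closed chain based at the origin.

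For (i), consider the closed chain $\mathbf{0}\to\mathbf r_1\to\cdots\to\mathbf r_K\to\mathbf 0$. Its vertex set is $\{\mathbf 0,\mathbf r_1,\dots,\mathbf r_K\}$, so its convex hull coincides with that of the open chain $\mathbf 0\to\mathbf r_1\to\cdots\to\mathbf r_K$. The support function of a polygonal chain is attained at a vertex, so it equals $\max\{0,h_K(\theta)\}$. This is the same as for the open chain, since the origin is always on the path. The constraint $h_K(t_j)\ge 0$ already reflects this, and adding the return edge alters no value of the support function. We therefore reuse verbatim the argument before Eq (6) in the transformed-triangle picture. For each $(\mathbf s,t)$ we have $\mathbf 0\in Q(\mathbf s,t)$, and the weighted identity $L_1d_1+L_2d_2+L_3d_3=2\operatorname{Area}(\mathcal T)$ holds. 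The support inequality forces some $h(\mathbf u_j)\ge d_j$, so some vertex lies outside the open half-space. By the Intermediate Value Theorem along the path from $\mathbf 0$, the chain then meets $\partial Q$. The closing edge lies after all vertices in the traversal, so escape occurs no later than in the open chain. The feasible set is thus the same, and we impose the constraints at the collocation angles $t_j$ exactly as in Eq (12).

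For (ii), the length of the closed chain is the open-chain sum in Eq (12) plus the final segment $\|\mathbf r_K-\mathbf 0\|=\sqrt{x_K^2+y_K^2}$, which gives the stated objective. This mirrors Corollary (closed curve), Eq (11), where $\|\mathbf r(2\pi)\|$ was added. Convergence as $K,M\to\infty$ follows as after Eq (12). Polygonal interpolation of a closed rectifiable curve through the origin is dense under uniform convergence, and the extra term is continuous under that convergence. A small dilation about the origin repairs any $o(1)$ deficit in the support constraints. This dilation scales all lengths, including the closing segment, by $1+o(1)$.

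The main obstacle is step (i). The difficulty is justifying that the weighted sum inequality actually forces some individual $h(\mathbf u_j)\ge d_j$ for every starting point $\mathbf s$, rather than only on average. We would handle it as in the derivation of Eq (7): suppose all three inequalities are strict, multiply by $L_j$, and sum. The result $\sum L_jh(\mathbf u_j)<\sum L_jd_j$ contradicts the constraint, once we check that the left side equals the expression in Eq (12) after the rotation and edge-length substitution of Eqs (3)–(4).
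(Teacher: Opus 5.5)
Your proposal is correct and follows essentially the same route as the paper, which simply asserts that closing the chain adds the return segment $\sqrt{x_K^2+y_K^2}$ to the objective of Eq (12) while leaving the constraints unchanged. Your observation that the closed and open chains share the vertex set $\{\mathbf 0,\mathbf r_1,\dots,\mathbf r_K\}$, and hence the same support function, is the justification the paper leaves implicit for reusing those constraints via Theorem 4; the ``main obstacle'' you single out is just the sufficiency half of Theorem 4 and is not specific to the closed case.
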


\subsection{Proof of equivalence, existence, and convergence}
In our previous paper \cite{Deng2026} on general solution of Bellman's lost-in-a-forest problem, we have already proved the equivalence, certificate, and convergence of the discretized general TSPN formulation for any shape. Here in this subsection, we prove for the optimal formulations in last two subsections for triangle forest.

For the support function, it must be taken over the complete path. Thus, define
\[
\widetilde{\Gamma}_{\mathbf r} = \{\lambda\mathbf r(0):0\leq\lambda\leq1\} \cup \{\mathbf r(p):0\leq p\leq2\pi\},
\]
\[
h_{\mathbf r}(\phi) = \max_{\mathbf q\in\widetilde{\Gamma}_{\mathbf r}} \mathbf q\cdot(\cos\phi,\sin\phi).
\]
Then $h_{\mathbf r}(\phi)\geq0$ automatically, and the objective function in Eq (2) is precisely the length of the complete path. Set 
\[
\sin\alpha\sin\beta=c
\]
\[
F_{\boldsymbol{\gamma}}(t) = \sin\beta \, h_{\boldsymbol{\gamma}}(t+\pi+\alpha) + \sin\alpha \, h_{\boldsymbol{\gamma}}(t+\pi-\beta) + \sin(\alpha+\beta) \, h_{\boldsymbol{\gamma}}(t).
\]

\begin{theorem}[Exact support function constraint characterization]
A continuous path starting at the origin escapes the triangle for every
unknown starting position and every orientation if and only if
\begin{equation}
F_{\boldsymbol{\gamma}}(t)\geq c, \forall t\in[0,2\pi).
\end{equation}
\end{theorem}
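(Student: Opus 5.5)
The plan is to reduce escape to a statement about the three slacks $d_j(\mathbf s)$ versus the three support values of the complete path $\widetilde\Gamma_{\boldsymbol\gamma}$, and then to eliminate the unknown starting point $\mathbf s$ using the fact that the slacks are rescaled barycentric coordinates. I work in the path-fixed coordinates of Section 2.2. The physical path escapes from starting point $\mathbf s\in\mathcal T$ with orientation $\theta$ exactly when $\widetilde\Gamma_{\boldsymbol\gamma}$ is not contained in the interior of $Q(\mathbf s,\theta)=\{\mathbf q:\mathbf u_j(\theta)\cdot\mathbf q\le d_j(\mathbf s)\}$, i.e.\ it reaches $\partial Q$.

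\textbf{Step 1: escape for a fixed $(\mathbf s,\theta)$.} I claim $\widetilde\Gamma_{\boldsymbol\gamma}\subset\operatorname{int}Q$ if and only if $h_{\boldsymbol\gamma}(\mathbf u_j(\theta))<d_j(\mathbf s)$ for $j=1,2,3$. Here $\operatorname{int}Q=\{\mathbf u_j\cdot\mathbf q<d_j\ \forall j\}$, and $\widetilde\Gamma_{\boldsymbol\gamma}$ is compact, so each maximum $h(\mathbf u_j)$ is attained. Hence the path fails to escape iff every $\mathbf u_j\cdot\mathbf q$ stays strictly below $d_j$, which is the claim. Conversely, if $h(\mathbf u_j)\ge d_j$ for some $j$, then $\mathbf u_j\cdot\mathbf q$ is continuous along the path, starts at $0\le d_j$ at the origin, and attains a value $\ge d_j$. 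By the intermediate value theorem the path meets the $j$-th supporting line, while it stays in $Q$ until the first such time; this is the argument already sketched before Eq.~(6).

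\textbf{Step 2: matching angles.} Since $R_{-\theta}$ subtracts $\theta$ from polar angles, I set $t=-\theta-\pi/2$. The normal $\mathbf n_3=(0,-1)$ has angle $-\pi/2$, so $\mathbf u_3$ has angle $t$. The normal $\mathbf n_1$ has angle $\pi/2+\alpha$, giving $t+\pi+\alpha$, and $\mathbf n_2$ has angle $\pi/2-\beta$, giving $t+\pi-\beta$. These agree with $\phi_1,\phi_2,\phi_3$ in Eq.~(4). Since $\theta\mapsto t$ is a bijection of the circle, quantifying over all $\theta$ is the same as quantifying over all $t$. Write $h_j=h_{\boldsymbol\gamma}(\phi_j(t))$ and note that $h_j\ge0$ because $\mathbf 0\in\widetilde\Gamma_{\boldsymbol\gamma}$.

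\textbf{Step 3: eliminating $\mathbf s$.} The map $\mathbf s\mapsto(d_1,d_2,d_3)$ is affine, with $d_j(\mathbf s)$ the distance from $\mathbf s$ to side $j$. By the identity $L_1d_1+L_2d_2+L_3d_3=2\operatorname{Area}(\mathcal T)$ stated before Eq.~(6), the quantities $L_jd_j/(2\operatorname{Area})$ are barycentric coordinates. Hence this map is a bijection from $\mathcal T$ onto the simplex $\Delta=\{d\ge0:\sum_j L_jd_j=2\operatorname{Area}\}$. Failure of escape for some $\mathbf s$ at angle $t$ therefore means there is $d\in\Delta$ with $d_j>h_j$ for all $j$.
\begin{itemize}
\item If such a $d$ exists, then $\sum_j L_jh_j<\sum_j L_jd_j=2\operatorname{Area}$.
\item Conversely, suppose $\sum_j L_jh_j<2\operatorname{Area}$. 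Put $\delta=2\operatorname{Area}-\sum_j L_jh_j>0$ and $d_j=h_j+\delta/(3L_j)$. Then $d_j>h_j\ge0$ and $d\in\Delta$.
\end{itemize}
So the path escapes for all $\mathbf s$ at angle $t$ iff $\sum_j L_jh_j\ge 2\operatorname{Area}=\sin\alpha\sin\beta/\sin(\alpha+\beta)$. Multiplying by $\sin(\alpha+\beta)>0$ and using Eq.~(3) gives exactly $F_{\boldsymbol\gamma}(t)\ge c$. Taking all $t$ proves the theorem.

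\textbf{Main obstacle.} The delicate point is the boundary convention in Step 1. ``Escape'' must mean reaching $\partial Q$, a closed condition, which matches the non-strict inequality. One must also be careful that the support function is taken over $\widetilde\Gamma_{\boldsymbol\gamma}$, including the initial segment from the origin, so that $h_j\ge0$. This nonnegativity is what makes the constructed $d$ lie in $\Delta$; without it the converse in Step 3 can fail. I would also double-check the sign and orientation bookkeeping in Step 2 against the explicit line equations given earlier.
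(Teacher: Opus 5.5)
Your proposal is correct and follows essentially the same route as the paper. You fix the phase, use the weighted slack identity to get sufficiency, and get necessity by placing a slack vector strictly above the support values inside the simplex; your perturbation $d_j=h_j+\delta/(3L_j)$ replaces the paper's uniform shift $d_j=H_j+\delta$, which makes no difference. Your explicit angle matching $t=-\theta-\pi/2$, the barycentric bijection $\mathcal T\to\Delta$, and the remark that $h_j\ge 0$ is needed to keep $d\in\Delta$ fill in steps the paper only asserts.
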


\begin{proof}
Fix $t$. For a starting point $\mathbf s$ in the triangle, let
$(d_1(\mathbf s),d_2(\mathbf s),d_3(\mathbf s))$ be the nonnegative
orthogonal distances from the origin to the three supporting lines of the
translated and rotated triangle. The triangle geometry gives
\begin{equation}
\sin\beta \, d_1(\mathbf s) + \sin\alpha \, d_2(\mathbf s) + \sin(\alpha+\beta) \, d_3(\mathbf s) = c.
\end{equation}
Moreover, as $\mathbf s$ ranges over the triangle, the distance vector
ranges over 
\[
\mathcal D = \left\{ (d_1,d_2,d_3)\in\mathbb R_+^3: \sin\beta \, d_1 + \sin\alpha \, d_2 + \sin(\alpha+\beta) \, d_3 = c \right\}.
\]

Let 
$
H_1=h_{\boldsymbol{\gamma}}(t+\pi+\alpha),H_2=h_{\boldsymbol{\gamma}}(t+\pi-\beta),H_3=h_{\boldsymbol{\gamma}}(t).
$
Since the path starts at the origin, it escapes the transformed triangle with
distance vector $(d_1,d_2,d_3)$ exactly when
\[
\max_{j=1,2,3}(H_j-d_j)\geq0.
\]

Suppose first that
\[
\sin\beta \, H_1 + \sin\alpha \, H_2 + \sin(\alpha+\beta) \, H_3 \geq c.
\]
It is impossible to have $H_j<d_j$ for every $j$, because multiplying
these inequalities by the positive weights and summing would contradict
Eq (15). Hence the path escapes every
translated triangle.

Conversely, suppose that the weighted support inequality fails. Define
\[
\delta = \frac{c - \sin\beta \, H_1 - \sin\alpha \, H_2 - \sin(\alpha+\beta) \, H_3}{\sin\beta+\sin\alpha+\sin(\alpha+\beta)} > 0
\]
and let $d_j=H_j+\delta, j=1,2,3.$ Then $(d_1,d_2,d_3)\in\mathcal D$, but $H_j<d_j$ for every $j$.
The corresponding starting position therefore produces a transformed
triangle that contains the entire path in its interior. Thus escape
fails.

The argument uses only continuity of the path and the attainment of each
support maximum. 
\end{proof}

\begin{theorem}[Existence of optimal escape path for triangle]
The continuous optimization problem, escape triangle with shortest escape path admits a minimizer.
\end{theorem}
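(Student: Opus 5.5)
I will prove existence by the direct method of the calculus of variations. By the preceding Theorem (Exact support function constraint characterization), the feasible set consists of exactly those continuous rectifiable paths $\boldsymbol\gamma$ starting at the origin with $F_{\boldsymbol\gamma}(t)\ge c$ for all $t$. The objective is the length of the complete path $\widetilde\Gamma_{\mathbf r}$. First I show the feasible set is nonempty with finite length. A concrete competitor is a segment from the origin of length equal to the triangle's diameter, traversed twice. Simpler still, take a path that traces any closed curve enclosing the triangle scaled appropriately. The precise choice does not matter; I only need the infimum $\ell^*$ to be finite. Let $\boldsymbol\gamma_n$ be a minimizing sequence with lengths $\ell_n\downarrow\ell^*$, all bounded by some $\Lambda$.

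Next I extract compactness. I reparametrize each $\boldsymbol\gamma_n$ (including the initial segment from the origin) by constant speed on $[0,1]$. The maps are then Lipschitz with constant $\ell_n\le\Lambda$, start at $\mathbf 0$, and lie in the closed disk of radius $\Lambda$. By Arzel\`a--Ascoli a subsequence converges uniformly to a $\Lambda$-Lipschitz curve $\boldsymbol\gamma$ with $\boldsymbol\gamma(0)=\mathbf 0$. Length is lower semicontinuous under uniform convergence, since it is a supremum of inscribed polygonal sums, each of which is continuous under pointwise convergence. Hence $L(\boldsymbol\gamma)\le\liminf \ell_n=\ell^*$.

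The remaining step, and the one needing care, is that the constraint passes to the limit. For each direction $\phi$, the support function $h_{\boldsymbol\gamma}(\phi)=\max_s \boldsymbol\gamma(s)\cdot\hat{\mathbf n}(\phi)$ satisfies $|h_{\boldsymbol\gamma_n}(\phi)-h_{\boldsymbol\gamma}(\phi)|\le\|\boldsymbol\gamma_n-\boldsymbol\gamma\|_\infty$, because the maximum of a sup-norm perturbation moves by at most the perturbation. The origin stays in the image, so $h\ge 0$ is preserved. Therefore $F_{\boldsymbol\gamma_n}(t)\to F_{\boldsymbol\gamma}(t)$ uniformly in $t$, with error at most $(\sin\alpha+\sin\beta+\sin(\alpha+\beta))\|\boldsymbol\gamma_n-\boldsymbol\gamma\|_\infty$. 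The inequality $F_{\boldsymbol\gamma}(t)\ge c$ then follows for every $t$ as a closed condition. Applying the characterization theorem in the reverse direction, $\boldsymbol\gamma$ is an escape path, and its length is at most $\ell^*$, so it is a minimizer.

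The main obstacle I expect is bookkeeping rather than depth. I must make sure the reparametrization handles degenerate cases where $\ell_n$ tends to a constant-speed limit with stationary pieces. Any stationary pieces are harmless, since the image and support function are unaffected. The fixed parameter domain $[0,2\pi]$ used in Eq (8) is recovered by rescaling. I also need the feasible set to be nonempty, for which I will simply exhibit a long enough segment or loop and verify $F\ge c$ via the characterization.
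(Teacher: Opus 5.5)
Your proof is correct, but it takes a different and substantially more complete route than the paper. The paper's proof has a single step. It cites Finch's result that a segment of length equal to the diameter is an escape path, so the feasible set is nonempty and $v<\infty$. It then asserts directly that the infimum is attained.

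That step matches your first paragraph, since your diameter segment is exactly Finch's competitor. You can simplify it. There is no need to traverse the segment twice, and the vague closed-curve alternative can be dropped. Any path reaching distance $D=\operatorname{diam}\mathcal T$ from the origin is feasible, because every congruent copy of $\mathcal T$ containing the origin lies in the closed disk of radius $D$, and points of the copy at distance exactly $D$ lie on its boundary. This also lets you take $\Lambda=D$.

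Everything after that in your proposal is absent from the paper's proof of this theorem:
\begin{itemize}
\item constant-speed reparametrization;
\item Arzel\`a--Ascoli;
\item lower semicontinuity of length;
\item closedness of $F_{\boldsymbol\gamma}\ge c$ under uniform convergence, via $|h_{\boldsymbol\gamma_n}-h_{\boldsymbol\gamma}|\le\|\boldsymbol\gamma_n-\boldsymbol\gamma\|_\infty$.
\end{itemize}
The paper does later appeal to ``the same Arzel\`a--Ascoli and lower-semicontinuity argument used in Theorem 5'' inside the proof of Theorem 6, and uses the same $W\varepsilon$ support estimate there. But that argument is only actually written out in your version.

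What each buys: the paper's version is brief and gives the explicit bound $v\le\operatorname{diam}\mathcal T$. However, nonemptiness alone yields only a finite infimum, not attainment. Your direct-method argument is what actually proves the statement. It relies essentially on the fact, visible in Theorem 4, that escape is a closed condition: reaching the boundary counts as escaping.
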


\begin{proof}
Since triangle is a closed shape, and Finch's paper \cite{Finch2004} proved that the diameter of a closed shape is necessarily an escape path. Thus, an optimal continuous escape path for the triangle must exist. Therefore, the continuous optimization problem
\[
v = \inf \left\{ \ell(\boldsymbol{\gamma}): \boldsymbol{\gamma}(0)=\mathbf0, \; F_{\boldsymbol{\gamma}}(t)\geq c \ \forall t \right\}
\]
admits a minimizer.
\end{proof}

For the polygonal formulation, set $\mathbf q_0=\mathbf0$ and use
\[
h_K(\phi) = \max_{0\leq i\leq K} \mathbf q_i\cdot(\cos\phi,\sin\phi).
\]
This is the support function of the entire polygonal path, as a linear
functional attains its maximum on a line segment at one of its endpoints.
Let $v_K$ denote the minimum of the exact polygonal problem in which the
support constraint is imposed for every $t\in[0,2\pi)$.

\begin{theorem}[Convergence of polygonal paths]
The exact polygonal optimal values satisfy
\begin{equation}
v_K\rightarrow v \text{as }K\to\infty.
\end{equation}
Moreover, every bounded sequence of asymptotically optimal polygonal paths
a uniformly convergent subsequence whose limit is optimal continuous path.
\end{theorem}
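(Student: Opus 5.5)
We prove $v_K\to v$ by establishing matching upper and lower bounds on $v_K$, and then obtain the limit statement from compactness combined with stability of the constraint. The tools are Theorem 1 (the exact characterization $F_{\boldsymbol\gamma}\ge c$) and Theorem 2 (existence of a continuous minimizer of length $v$).

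\textbf{Lower bound $v_K\ge v$.} Every polygonal path $\mathbf 0=\mathbf q_0\to\mathbf q_1\to\cdots\to\mathbf q_K$ is a continuous rectifiable path starting at the origin. Its support function is exactly $h_K$, since a linear functional attains its maximum on a segment at an endpoint. So if the polygonal path satisfies the support constraint for all $t$, it is admissible for the continuous problem, and $v_K\ge v$. Polygonal chains with $K$ vertices also embed into chains with $K+1$ vertices by repeating a vertex, so $v_K$ is nonincreasing.

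\textbf{Upper bound $\limsup v_K\le v$.} Let $\boldsymbol\gamma^*$ be the minimizer from Theorem 2, with length $v$. Take the inscribed polygon $P_K$ through the points $\boldsymbol\gamma^*(p_i)$ on a uniform partition. Then $\ell(P_K)\le v$, and $\|P_K-\boldsymbol\gamma^*\|_\infty\le \epsilon_K:=v/K$, since chord endpoints lie on the curve and each piece has arc length at most $v/K$. Support functions are $1$-Lipschitz with respect to Hausdorff distance, so $h_{P_K}\ge h_{\boldsymbol\gamma^*}-\epsilon_K$. This gives $F_{P_K}(t)\ge c-\epsilon_K S$, where $S=\sin\alpha+\sin\beta+\sin(\alpha+\beta)$.

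Feasibility is restored by the dilation $\lambda_K P_K$ about the origin, which keeps the base point at $\mathbf 0$. Because $h\ge 0$ and support functions scale linearly, we have $F_{\lambda P}=\lambda F_P$. Choosing $\lambda_K=c/(c-\epsilon_K S)\to 1$ therefore makes the path feasible. Its length is $\lambda_K\ell(P_K)\le\lambda_K v$, so $v_K\le\lambda_K v\to v$. This step needs $c>0$, which holds since $\alpha,\beta\in(0,\pi)$.

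\textbf{Convergence of near-optimal sequences.} Take feasible polygonal paths $P_K$ with $\ell(P_K)\to v$. Reparametrize each by constant speed on $[0,1]$. They are then uniformly Lipschitz with constant bounded by $\sup_K\ell(P_K)$, and all start at $\mathbf 0$. By Arzel\`a--Ascoli, a subsequence converges uniformly to a path $\boldsymbol\gamma$. Length is lower semicontinuous under uniform convergence, so $\ell(\boldsymbol\gamma)\le\liminf\ell(P_K)=v$.

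To check feasibility of the limit, note that uniform convergence of paths gives Hausdorff convergence of their images, hence uniform convergence $h_{P_K}\to h_{\boldsymbol\gamma}$. Thus $F_{P_K}\to F_{\boldsymbol\gamma}$ uniformly in $t$, and $F_{\boldsymbol\gamma}\ge c$. By Theorem 1, $\boldsymbol\gamma$ is an escape path, so $\ell(\boldsymbol\gamma)\ge v$. Hence $\boldsymbol\gamma$ is optimal, and its length equals $v$.

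\textbf{Main obstacle.} The delicate step is the recovery-sequence construction, because interpolation alone loses feasibility. The fix is the scaling argument, which uses the homogeneity of the constraint together with the strict positivity of $c$. A secondary point is that the existence argument in Theorem 2 is quite terse. If we need independence from it, the compactness argument above can instead be run on a minimizing sequence of continuous paths, after restricting to lengths at most the triangle's diameter, which is a known escape path.
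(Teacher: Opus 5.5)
Your proof is correct and follows the paper's own argument. You use the inclusion bound $v\le v_K$, then the inscribed-polygon recovery sequence made feasible by the dilation $\rho_K=c/(c-W\varepsilon_K)$, and finally Arzel\`a--Ascoli with lower semicontinuity of length for the limit statement, adding explicit details (the bound $\varepsilon_K\le v/K$ and the passage of the constraint $F\ge c$ to the uniform limit) that the paper leaves implicit.
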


\begin{proof}
Since polygonal chain paths are admissible continuous paths, 
\[
v\leq v_K.
\]

Let $\boldsymbol{\gamma}^*$ be an optimal continuous path and let $P_K$
be its polygonal interpolation on a partition whose mesh tends to zero. Then
\[
\|P_K-\boldsymbol{\gamma}^*\|_\infty = \varepsilon_K\to0, \qquad \ell(P_K)\leq\ell(\boldsymbol{\gamma}^*)=v.
\]
Define $W = \sin\beta+\sin\alpha+\sin(\alpha+\beta).$ Uniform support convergence gives
\[
F_{P_K}(t)\geq c-W\varepsilon_K.
\]
For sufficiently large $K$, set $\rho_K = \frac{c}{c-W\varepsilon_K}. $ Then $\rho_K\to1$, and positive homogeneity of the support function yields
\[
F_{\rho_KP_K}(t) = \rho_KF_{P_K}(t) \geq c.
\]
Thus $\rho_KP_K$ is an exactly feasible polygonal recovery sequence and
\[
v_K \leq \rho_K\ell(P_K) \leq \rho_Kv.
\]
Therefore
\[
\limsup_{K\to\infty}v_K\leq v,
\]
which, together with $v\leq v_K$, proves Eq (16).

Compactness and convergence of asymptotically optimal polygonal paths follow
from the same Arzel\`a--Ascoli and lower-semicontinuity argument used in
Theorem 5. Similar approach was used in proof of Theorem 2 and 7 in \cite{Deng2026}.
\end{proof}

Let $v_{K,M}$ be the polygonal problem in which the support constraint is
enforced only at $t_j=\frac{2\pi j}{M}, j=0,\ldots,M-1.$

\begin{lemma}[Angular collocation error]
If escape path starts at the origin and has length at most $B$, then
\begin{equation}
|F_{\boldsymbol{\gamma}}(t)-F_{\boldsymbol{\gamma}}(s)| \leq WB \, \operatorname{dist}_{\mathbb S^1}(t,s).
\end{equation}
Consequently, satisfaction of the constraints at all collocation points
implies
\begin{equation}
F_{\boldsymbol{\gamma}}(t) \geq c-\frac{\pi WB}{M}, \qquad \forall t\in[0,2\pi).
\end{equation}
\end{lemma}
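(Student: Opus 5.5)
The plan is to first prove a Lipschitz bound for the support function of a single path and then sum over the three weighted terms. Fix a path $\boldsymbol{\gamma}$ starting at the origin with length at most $B$. Every point $\mathbf q$ on the complete path $\widetilde{\Gamma}$ can be joined to the origin by an arc of the path, so $\|\mathbf q\|\le B$. For any angles $\phi,\psi$, the map $\phi\mapsto \mathbf q\cdot(\cos\phi,\sin\phi)$ satisfies
\[
|\mathbf q\cdot \hat{\mathbf n}(\phi)-\mathbf q\cdot \hat{\mathbf n}(\psi)|\le \|\mathbf q\|\,\|\hat{\mathbf n}(\phi)-\hat{\mathbf n}(\psi)\|\le B\,\operatorname{dist}_{\mathbb S^1}(\phi,\psi),
\]
using that the chord length $2|\sin((\phi-\psi)/2)|$ is at most the arc length. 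A maximum of $B$-Lipschitz functions is $B$-Lipschitz; the maximum is attained by compactness of $\widetilde{\Gamma}$. Hence $|h_{\boldsymbol\gamma}(\phi)-h_{\boldsymbol\gamma}(\psi)|\le B\,\operatorname{dist}_{\mathbb S^1}(\phi,\psi)$.

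Next I apply this bound to each term of $F_{\boldsymbol\gamma}$. The three arguments $t+\pi+\alpha$, $t+\pi-\beta$ and $t$ are shifts of $t$ by constants, so each pair of corresponding arguments at $t$ and $s$ differs on the circle by exactly $\operatorname{dist}_{\mathbb S^1}(t,s)$. The weights $\sin\beta$, $\sin\alpha$ and $\sin(\alpha+\beta)$ are all positive since $0<\alpha,\beta$ and $\alpha+\beta<\pi$. The triangle inequality then gives
\[
|F_{\boldsymbol\gamma}(t)-F_{\boldsymbol\gamma}(s)|\le (\sin\beta+\sin\alpha+\sin(\alpha+\beta))\,B\,\operatorname{dist}_{\mathbb S^1}(t,s)=WB\,\operatorname{dist}_{\mathbb S^1}(t,s).
\]

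For the consequence, note that the collocation points $t_j=2\pi j/M$ partition the circle into arcs of length $2\pi/M$. Any $t$ therefore lies within circular distance $\pi/M$ of some $t_j$. Since $F_{\boldsymbol\gamma}(t_j)\ge c$, the Lipschitz bound gives $F_{\boldsymbol\gamma}(t)\ge c-WB\pi/M$, which is the second claim of the statement.

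The only step needing care is the bound $\|\mathbf q\|\le B$ on the whole path. This must include the initial segment from the origin to $\mathbf r(0)$, which is why the complete path $\widetilde{\Gamma}$ is used. With that in place, and with the chord-versus-arc inequality, every remaining step is routine.
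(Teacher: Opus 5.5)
Your proposal is correct and follows the same route as the paper's proof. You bound $\|\mathbf q\|\le B$ on the complete path, deduce that $h_{\boldsymbol\gamma}$ is $B$-Lipschitz in circular distance, apply this to the three positively weighted shifted terms to get the constant $WB$, and use that every angle lies within $\pi/M$ of a collocation point. You also make explicit the steps the paper leaves implicit: the chord-versus-arc inequality, that a maximum of Lipschitz functions is Lipschitz, shift invariance of $\operatorname{dist}_{\mathbb S^1}$, and positivity of the weights.
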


\begin{proof}
Since the path starts at the origin, every point on it has norm at most
$B$. Therefore
\[
|h_{\boldsymbol{\gamma}}(\phi) -h_{\boldsymbol{\gamma}}(\psi)| \leq B \, \operatorname{dist}_{\mathbb S^1}(\phi,\psi).
\]
Applying this estimate to the three terms defining
$F_{\boldsymbol{\gamma}}$ proves
Eq (17). Every angle lies within
$\pi/M$ of a collocation point, which gives Eq (18).
\end{proof}

\begin{theorem}[Convergence of the fully discrete formulation]
For each fixed $K$, $v_{K,M}\rightarrow v_K \text{as }M\to\infty.$ Furthermore, $K_n\to\infty$ and $M_n\to\infty$, then
\begin{equation}
v_{K_n,M_n}\rightarrow v.
\end{equation}
\end{theorem}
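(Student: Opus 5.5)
The plan is a sandwich argument. The collocation lemma gives the lower bound, and the dilation trick from the proof of polygonal convergence gives feasibility.

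\textbf{Step 1 (monotonicity).} For fixed $K$, every exact polygonal feasible path satisfies the collocated constraints, since the latter are a subset of the constraints for all $t$. Hence
\[
v_{K,M}\le v_K .
\]

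\textbf{Step 2 (reverse bound for fixed $K$).} Take an optimal (or $\eta$-optimal) collocated polygonal path $P$ for $v_{K,M}$. Existence for fixed $K$ follows by a finite-dimensional compactness argument after restricting the length to at most $v_K$.

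Since $\ell(P)\le v_K=:B$, the Angular collocation error lemma gives
\[
F_P(t)\ge c-\frac{\pi W B}{M}\quad\text{for all } t .
\]
Set
\[
\rho_M=\frac{c}{c-\pi W B/M},
\]
which is well defined for $M$ large. Positive homogeneity of the support function (already used in the proof of convergence of polygonal paths) gives $F_{\rho_M P}\ge c$ everywhere. Scaling also preserves polygonality and the vertex count $K$, and the base point stays at the origin.

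Therefore
\[
v_K\le \rho_M\,\ell(P)\le \rho_M\,(v_{K,M}+\eta).
\]
Letting $\eta\to0$ and $M\to\infty$ gives $\rho_M\to1$, so $\liminf_M v_{K,M}\ge v_K$. Combined with Step 1, $v_{K,M}\to v_K$.

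\textbf{Step 3 (joint limit).} For the diagonal sequence, the upper bound is
\[
v_{K_n,M_n}\le v_{K_n}\to v
\]
by the polygonal convergence theorem.

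For the lower bound, the same dilation applies with $B=v_{K_n}$. This sequence is bounded uniformly in $n$: by the polygonal convergence theorem $v_{K_n}\to v$, and $v$ is finite by the existence theorem, e.g.\ bounded by the length of the triangle's diameter path. So
\[
v\le v_{K_n}\le \rho_{M_n}\, v_{K_n,M_n},\qquad \rho_{M_n}=\frac{c}{c-\pi W\sup_n v_{K_n}/M_n}\to1 .
\]
The first inequality holds because exact polygonal paths are admissible continuous paths. Hence $\liminf_n v_{K_n,M_n}\ge v$, and the two bounds together give Eq (19).

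\textbf{Main obstacle.} The delicate point is the uniform length bound $B$ needed to invoke the lemma. Collocated minimizers could a priori be long, so I restrict attention to paths of length at most $v_K$ (respectively $\sup_n v_{K_n}$). This costs nothing, since $v_{K,M}\le v_K$ means near-optimal paths automatically satisfy the bound.

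A second technical point is that the $\max$ in $h_K$ must include $\mathbf q_0=\mathbf0$. This guarantees $h\ge0$ and that the support function of the scaled path is the scaled support function.
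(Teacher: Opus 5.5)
Your proof is correct. The upper bounds, $v_{K,M}\le v_K$ and $v_{K_n,M_n}\le v_{K_n}\to v$, are the same as in the paper, but your lower bound takes a different route.

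The paper obtains $v_K\le\liminf_{M}v_{K,M}$ and $v\le\liminf_n v_{K_n,M_n}$ by compactness. It takes near-optimal collocation solutions of bounded length, extracts a subsequence whose vertices (or paths) and support functions converge uniformly, and uses Eq (18) to show the limit violates no constraint. It then finishes with lower semicontinuity of length.

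You instead repair each collocation-feasible path directly by the dilation $\rho_M=c/(c-\pi WB/M)$. This uses only the angular collocation lemma and positive homogeneity of $F$ under dilation about the base point, which preserves the vertex count and the start at the origin. It gives the explicit estimate $v_{K,M}\ge\bigl(1-\pi WB/(cM)\bigr)v_K$ with $B$ independent of $K$. The joint limit then needs no Arzel\`a--Ascoli step, no lower semicontinuity, and no coupling between $K_n$ and $M_n$. This is exactly the quantitative squeeze, $(1-a/M)v_K\le v_{K,M}$ and $(1-\mathrm{err}_n)v\le v_{K_n,M_n}$, that the paper's Lean code for this theorem takes as hypotheses. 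So your argument matches the formalization more closely than the prose proof does.

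What the compactness route gives in exchange:
\begin{itemize}
\item convergence, along a subsequence, of the near-optimal discrete paths themselves to an optimal continuous path, not just of the optimal values;
\item independence from homogeneity of the constraint, which your argument needs ($c>0$, and a positive combination of support functions of a set containing the origin).
\end{itemize}

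One cosmetic point: for an $\eta$-optimal path the length bound is $v_K+\eta$ rather than $v_K$. This changes nothing, and you can also just use the exact minimizer whose existence you noted.
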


\begin{proof}
Exact angular feasibility implies collocation feasibility, so
\[
v_{K,M}\leq v_K.
\]
Conversely, choose nearly optimal collocation solutions with uniformly
bounded objective values. Their vertices lie in a common bounded set.
After passing to a subsequence, the vertices and the associated support
functions converge uniformly. By Eq (18), their maximum constraint
violation tends to zero as $M\to\infty$; hence the limiting polygonal path
is feasible for every angle. Lower semicontinuity of the polygonal length
then gives
\[
v_K \leq \liminf_{M\to\infty}v_{K,M}.
\]
Thus $v_{K,M}\to v_K$.

For $K_n,M_n\to\infty$, compactness, the preceding collocation estimate,
and lower semicontinuity give
\[
v \leq \liminf_{n\to\infty}v_{K_n,M_n}.
\]
The polygonal recovery sequence constructed in Theorem 6 is feasible for all angles and hence for every collocation grid, yielding
\[
\limsup_{n\to\infty}v_{K_n,M_n}\leq v.
\]
Therefore Eq(19) follows.
\end{proof}

The proofs provide the compactness and liminf inequality, together
with an explicit polygonal recovery sequence. Hence the polygonal and
discretized formulations converge to the continuous
optimization problem.

\subsection{Extend to universal polygon covering curve and polygonal chain}
A direct corollary and generalization of Eq(8) is that, by applying a similar process to multiple lines (boundary of polygons), such as in the "escape from polygon" or "polygon covering curves" rather than triangles, we can derive optimization formulas.

For a polygon defined by $m$ lines,
\[
    P = \{q : u_j \cdot q \le b_j, \ j = 1, \dots, m\},
\]
the general continuous optimization formula of covering convex escape curve is
\begin{equation}
\begin{aligned}
\text{minimize}_{r \in AC([0,2\pi];\mathbb{R}^2)} \quad & \|r(0)\| + \int_0^{2\pi} \|r'(p)\| \, dp, \\
\text{s.t.} \quad & \max_{\substack{\lambda_j \ge 0 \\ \sum_{j=1}^m \lambda_j = 1 \\ \sum_{j=1}^m \lambda_j u_j = 0}} \sum_{j=1}^m \lambda_j [h_r(t + \phi_j) - b_j] \ge 0, \\
& \forall t \in [0, 2\pi), \\
& h_r(t) = \max \left\{ 0, \max_{p \in [0,1]} r(p) \cdot (\cos t, \sin t) \right\}.
\end{aligned}
\end{equation}

We can also derive optimization formulas for polygon covering polygonal chain curves from Eq (12) and Eq (13) as
\begin{equation}
\begin{aligned}
\text{minimize}_{\{x_i,y_i\}_{i=1}^{K}}
&\sqrt{x_1^2+y_1^2}+\sum_{i=2}^{K}\sqrt{(x_i-x_{i-1})^2+(y_i-y_{i-1})^2}\\
\mathrm{s.t.} &\max_{\substack{\lambda_j\ge 0,\; j=1,\ldots,m\\
\sum_{j=1}^{m}\lambda_j=1\\
\sum_{j=1}^{m}\lambda_j u_j=0}}
\sum_{j=1}^{m}\lambda_j\left[h_K(t+\phi_j)-b_j\right]
\ge 0,\forall t\in[0,2\pi),\\
&
h_K(\theta)
=
\max\left\{
0,\,
\max_{1\le i\le K}
\left[
x_i\cos\theta+y_i\sin\theta
\right]
\right\}.
\end{aligned}
\end{equation}

The proof of above formulas is very similar to that of the previous subsection and will not be repeated again.

\section{Results for arbitrary triangle cover}

\subsection{Triangle covering curves}
Figure 2 shows the numerical results of triangle covering curve. We solved the optimization in Eq (9) by discretizing interval $[0, 2\pi]$ into 1440 parts. The optimization was terminated at iteration \(k\) when the relative change in the decision vector
\(\mathbf{z}^{(k)}=(x_0^{(k)},y_0^{(k)},\ldots,x_N^{(k)},y_N^{(k)})\) satisfied
\[
\left\|\mathbf{z}^{(k)}-\mathbf{z}^{(k-1)}\right\|_2
\leq
\max\left\{10^{-9},\,10^{-8}\left\|\mathbf{z}^{(k)}\right\|_2\right\},
\]
and both the maximum constraint violation and the generalized KKT stationarity residual were less than \(10^{-9}\). The generalized KKT residual was employed because the Euclidean-norm objective function and the maximum constraints are nonsmooth.

The figure shows the results when base angles are multiples of 5 degrees, and $0<\alpha \le \beta \le \pi - \alpha - \beta$, without duplications. For polygonal chain, we calculated no more than six segments.  We selected the shortest result of each triangle for curve and polygonal chain to show in the figure.

For an isosceles triangle, we can obtain the same results of various escape path shape as in Gibbs's paper \cite{Gibbs2016}. For shape of arbitrary triangles especially non-isosceles ones, we obtain all the results for the first time.

\begin{figure}[p]
\centering
\begin{subfigure}[b]{0.326\linewidth}
\centering
\includegraphics[width=1\linewidth]{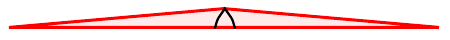}
\caption*{$5^{\circ}-5^{\circ}$}
\end{subfigure}
\begin{subfigure}[b]{0.326\linewidth}
\centering
\includegraphics[width=1\linewidth]{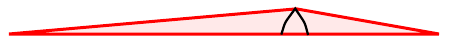}
\caption*{$5^{\circ}-10^{\circ}$}
\end{subfigure}
\begin{subfigure}[b]{0.326\linewidth}
\centering
\includegraphics[width=1\linewidth]{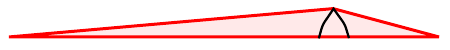}
\caption*{$5^{\circ}-15^{\circ}$}
\end{subfigure}
\begin{subfigure}[b]{0.326\linewidth}
\centering
\includegraphics[width=1\linewidth]{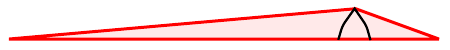}
\caption*{$5^{\circ}-20^{\circ}$}
\end{subfigure}
\begin{subfigure}[b]{0.326\linewidth}
\centering
\includegraphics[width=1\linewidth]{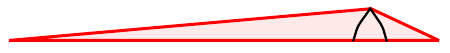}
\caption*{$5^{\circ}-25^{\circ}$}
\end{subfigure}
\begin{subfigure}[b]{0.326\linewidth}
\centering
\includegraphics[width=1\linewidth]{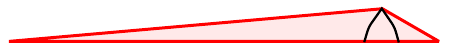}
\caption*{$5^{\circ}-30^{\circ}$}
\end{subfigure}
\begin{subfigure}[b]{0.326\linewidth}
\centering
\includegraphics[width=1\linewidth]{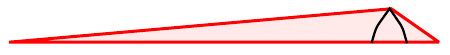}
\caption*{$5^{\circ}-35^{\circ}$}
\end{subfigure}
\begin{subfigure}[b]{0.326\linewidth}
\centering
\includegraphics[width=1\linewidth]{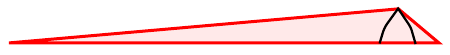}
\caption*{$5^{\circ}-40^{\circ}$}
\end{subfigure}
\begin{subfigure}[b]{0.326\linewidth}
\centering
\includegraphics[width=1\linewidth]{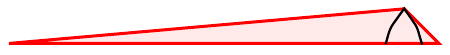}
\caption*{$5^{\circ}-45^{\circ}$}
\end{subfigure}
\begin{subfigure}[b]{0.326\linewidth}
\centering
\includegraphics[width=1\linewidth]{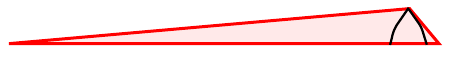}
\caption*{$5^{\circ}-50^{\circ}$}
\end{subfigure}
\begin{subfigure}[b]{0.326\linewidth}
\centering
\includegraphics[width=1\linewidth]{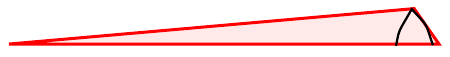}
\caption*{$5^{\circ}-55^{\circ}$}
\end{subfigure}
\begin{subfigure}[b]{0.326\linewidth}
\centering
\includegraphics[width=1\linewidth]{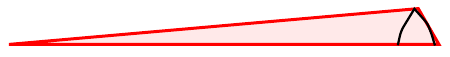}
\caption*{$5^{\circ}-60^{\circ}$}
\end{subfigure}
\begin{subfigure}[b]{0.326\linewidth}
\centering
\includegraphics[width=1\linewidth]{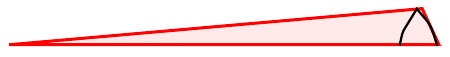}
\caption*{$5^{\circ}-65^{\circ}$}
\end{subfigure}
\begin{subfigure}[b]{0.326\linewidth}
\centering
\includegraphics[width=1\linewidth]{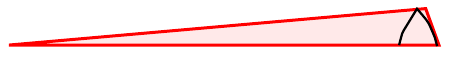}
\caption*{$5^{\circ}-70^{\circ}$}
\end{subfigure}
\begin{subfigure}[b]{0.326\linewidth}
\centering
\includegraphics[width=1\linewidth]{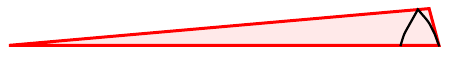}
\caption*{$5^{\circ}-75^{\circ}$}
\end{subfigure}
\begin{subfigure}[b]{0.326\linewidth}
\centering
\includegraphics[width=1\linewidth]{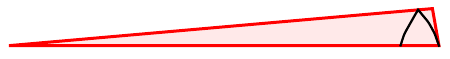}
\caption*{$5^{\circ}-80^{\circ}$}
\end{subfigure}
\begin{subfigure}[b]{0.326\linewidth}
\centering
\includegraphics[width=1\linewidth]{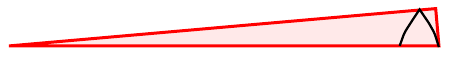}
\caption*{$5^{\circ}-85^{\circ}$}
\end{subfigure}
\begin{subfigure}[b]{0.326\linewidth}
\centering
\includegraphics[width=1\linewidth]{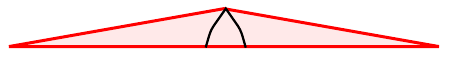}
\caption*{$10^{\circ}-10^{\circ}$}
\end{subfigure}
\begin{subfigure}[b]{0.326\linewidth}
\centering
\includegraphics[width=1\linewidth]{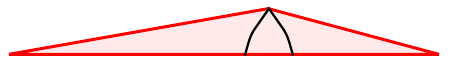}
\caption*{$10^{\circ}-15^{\circ}$}
\end{subfigure}
\begin{subfigure}[b]{0.326\linewidth}
\centering
\includegraphics[width=1\linewidth]{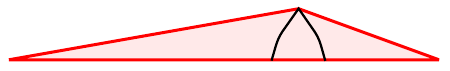}
\caption*{$10^{\circ}-20^{\circ}$}
\end{subfigure}
\begin{subfigure}[b]{0.326\linewidth}
\centering
\includegraphics[width=1\linewidth]{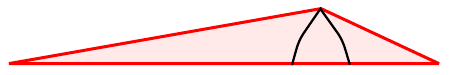}
\caption*{$10^{\circ}-25^{\circ}$}
\end{subfigure}
\begin{subfigure}[b]{0.326\linewidth}
\centering
\includegraphics[width=1\linewidth]{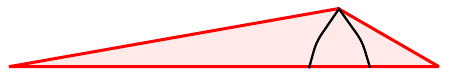}
\caption*{$10^{\circ}-30^{\circ}$}
\end{subfigure}
\begin{subfigure}[b]{0.326\linewidth}
\centering
\includegraphics[width=1\linewidth]{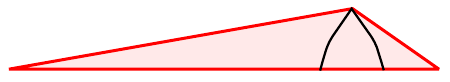}
\caption*{$10^{\circ}-35^{\circ}$}
\end{subfigure}
\begin{subfigure}[b]{0.326\linewidth}
\centering
\includegraphics[width=1\linewidth]{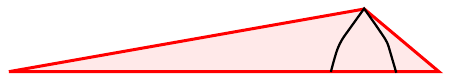}
\caption*{$10^{\circ}-40^{\circ}$}
\end{subfigure}
\begin{subfigure}[b]{0.326\linewidth}
\centering
\includegraphics[width=1\linewidth]{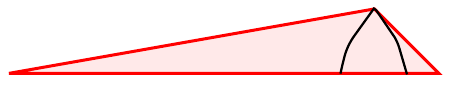}
\caption*{$10^{\circ}-45^{\circ}$}
\end{subfigure}
\begin{subfigure}[b]{0.326\linewidth}
\centering
\includegraphics[width=1\linewidth]{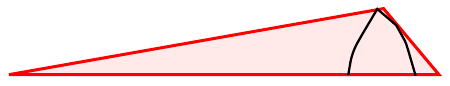}
\caption*{$10^{\circ}-50^{\circ}$}
\end{subfigure}
\begin{subfigure}[b]{0.326\linewidth}
\centering
\includegraphics[width=1\linewidth]{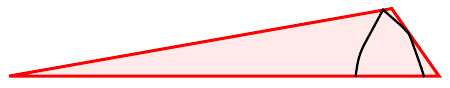}
\caption*{$10^{\circ}-55^{\circ}$}
\end{subfigure}
\begin{subfigure}[b]{0.326\linewidth}
\centering
\includegraphics[width=1\linewidth]{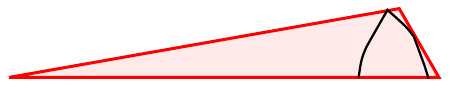}
\caption*{$10^{\circ}-60^{\circ}$}
\end{subfigure}
\begin{subfigure}[b]{0.326\linewidth}
\centering
\includegraphics[width=1\linewidth]{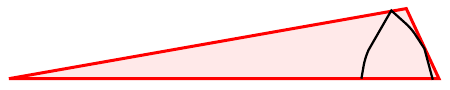}
\caption*{$10^{\circ}-65^{\circ}$}
\end{subfigure}
\begin{subfigure}[b]{0.326\linewidth}
\centering
\includegraphics[width=1\linewidth]{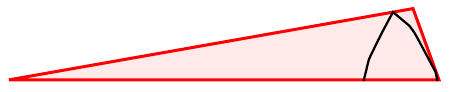}
\caption*{$10^{\circ}-70^{\circ}$}
\end{subfigure}
\begin{subfigure}[b]{0.326\linewidth}
\centering
\includegraphics[width=1\linewidth]{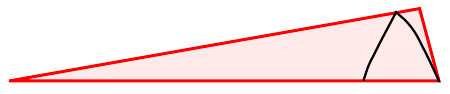}
\caption*{$10^{\circ}-75^{\circ}$}
\end{subfigure}
\begin{subfigure}[b]{0.326\linewidth}
\centering
\includegraphics[width=1\linewidth]{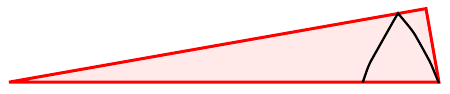}
\caption*{$10^{\circ}-80^{\circ}$}
\end{subfigure}
\begin{subfigure}[b]{0.326\linewidth}
\centering
\includegraphics[width=1\linewidth]{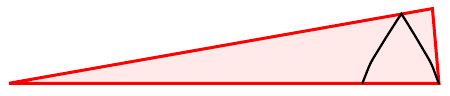}
\caption*{$10^{\circ}-85^{\circ}$}
\end{subfigure}
\begin{subfigure}[b]{0.326\linewidth}
\centering
\includegraphics[width=1\linewidth]{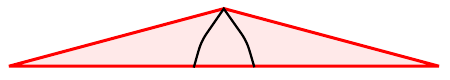}
\caption*{$15^{\circ}-15^{\circ}$}
\end{subfigure}
\begin{subfigure}[b]{0.326\linewidth}
\centering
\includegraphics[width=1\linewidth]{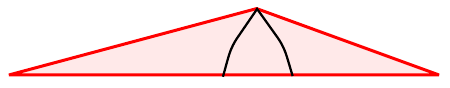}
\caption*{$15^{\circ}-20^{\circ}$}
\end{subfigure}
\begin{subfigure}[b]{0.326\linewidth}
\centering
\includegraphics[width=1\linewidth]{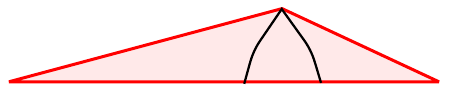}
\caption*{$15^{\circ}-25^{\circ}$}
\end{subfigure}
\caption{Results of triangle covering curve (Black curve is escape path, red curve is forest boundary, and caption is the base angle)}
\end{figure}

\begin{figure}[p]
\ContinuedFloat
\centering
\begin{subfigure}[b]{0.326\linewidth}
\centering
\includegraphics[width=1\linewidth]{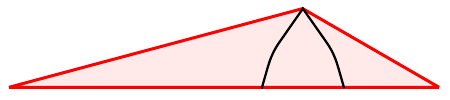}
\caption*{$15^{\circ}-30^{\circ}$}
\end{subfigure}
\begin{subfigure}[b]{0.326\linewidth}
\centering
\includegraphics[width=1\linewidth]{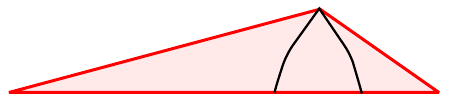}
\caption*{$15^{\circ}-35^{\circ}$}
\end{subfigure}
\begin{subfigure}[b]{0.326\linewidth}
\centering
\includegraphics[width=1\linewidth]{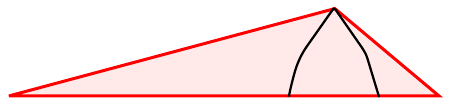}
\caption*{$15^{\circ}-40^{\circ}$}
\end{subfigure}
\begin{subfigure}[b]{0.326\linewidth}
\centering
\includegraphics[width=1\linewidth]{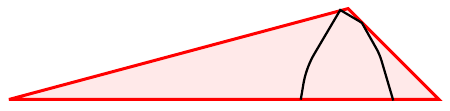}
\caption*{$15^{\circ}-45^{\circ}$}
\end{subfigure}
\begin{subfigure}[b]{0.326\linewidth}
\centering
\includegraphics[width=1\linewidth]{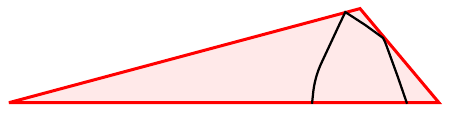}
\caption*{$15^{\circ}-50^{\circ}$}
\end{subfigure}
\begin{subfigure}[b]{0.326\linewidth}
\centering
\includegraphics[width=1\linewidth]{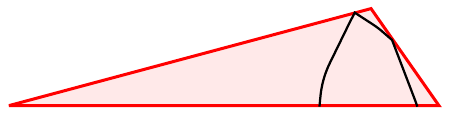}
\caption*{$15^{\circ}-55^{\circ}$}
\end{subfigure}
\begin{subfigure}[b]{0.326\linewidth}
\centering
\includegraphics[width=1\linewidth]{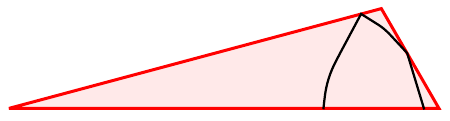}
\caption*{$15^{\circ}-60^{\circ}$}
\end{subfigure}
\begin{subfigure}[b]{0.326\linewidth}
\centering
\includegraphics[width=1\linewidth]{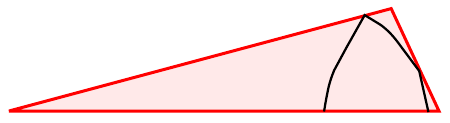}
\caption*{$15^{\circ}-65^{\circ}$}
\end{subfigure}
\begin{subfigure}[b]{0.326\linewidth}
\centering
\includegraphics[width=1\linewidth]{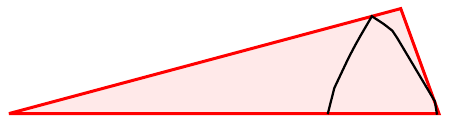}
\caption*{$15^{\circ}-70^{\circ}$}
\end{subfigure}
\begin{subfigure}[b]{0.326\linewidth}
\centering
\includegraphics[width=1\linewidth]{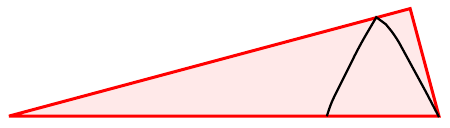}
\caption*{$15^{\circ}-75^{\circ}$}
\end{subfigure}
\begin{subfigure}[b]{0.326\linewidth}
\centering
\includegraphics[width=1\linewidth]{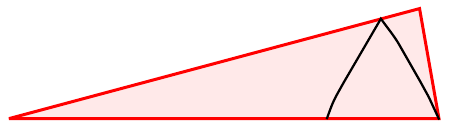}
\caption*{$15^{\circ}-80^{\circ}$}
\end{subfigure}
\begin{subfigure}[b]{0.326\linewidth}
\centering
\includegraphics[width=1\linewidth]{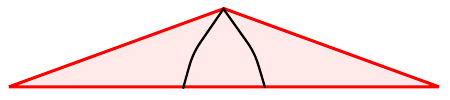}
\caption*{$20^{\circ}-20^{\circ}$}
\end{subfigure}
\begin{subfigure}[b]{0.326\linewidth}
\centering
\includegraphics[width=1\linewidth]{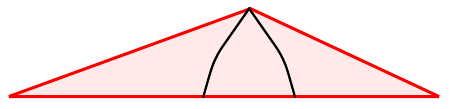}
\caption*{$20^{\circ}-25^{\circ}$}
\end{subfigure}
\begin{subfigure}[b]{0.326\linewidth}
\centering
\includegraphics[width=1\linewidth]{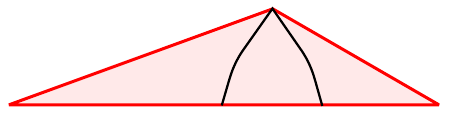}
\caption*{$20^{\circ}-30^{\circ}$}
\end{subfigure}
\begin{subfigure}[b]{0.326\linewidth}
\centering
\includegraphics[width=1\linewidth]{20-25.pdf}
\caption*{$20^{\circ}-25^{\circ}$}
\end{subfigure}
\begin{subfigure}[b]{0.326\linewidth}
\centering
\includegraphics[width=1\linewidth]{20-30.pdf}
\caption*{$20^{\circ}-30^{\circ}$}
\end{subfigure}
\begin{subfigure}[b]{0.326\linewidth}
\centering
\includegraphics[width=1\linewidth]{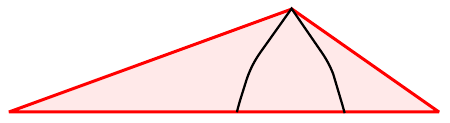}
\caption*{$20^{\circ}-35^{\circ}$}
\end{subfigure}
\begin{subfigure}[b]{0.326\linewidth}
\centering
\includegraphics[width=1\linewidth]{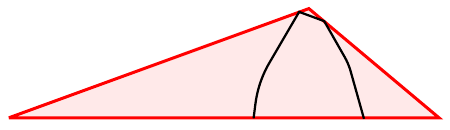}
\caption*{$20^{\circ}-40^{\circ}$}
\end{subfigure}
\begin{subfigure}[b]{0.326\linewidth}
\centering
\includegraphics[width=1\linewidth]{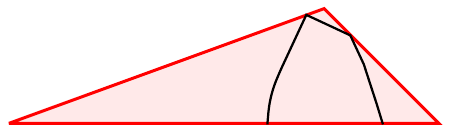}
\caption*{$20^{\circ}-45^{\circ}$}
\end{subfigure}
\begin{subfigure}[b]{0.326\linewidth}
\centering
\includegraphics[width=1\linewidth]{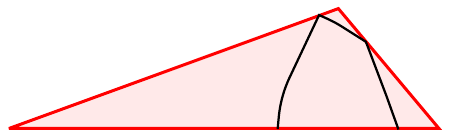}
\caption*{$20^{\circ}-50^{\circ}$}
\end{subfigure}
\begin{subfigure}[b]{0.326\linewidth}
\centering
\includegraphics[width=1\linewidth]{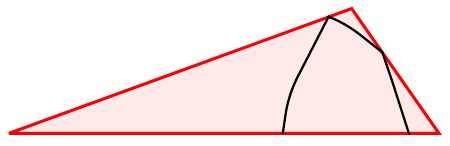}
\caption*{$20^{\circ}-55^{\circ}$}
\end{subfigure}
\begin{subfigure}[b]{0.326\linewidth}
\centering
\includegraphics[width=1\linewidth]{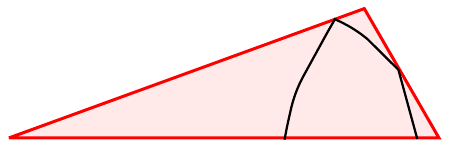}
\caption*{$20^{\circ}-60^{\circ}$}
\end{subfigure}
\begin{subfigure}[b]{0.326\linewidth}
\centering
\includegraphics[width=1\linewidth]{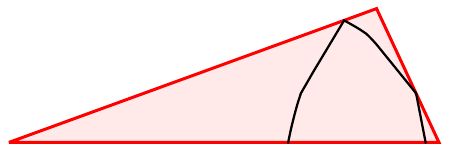}
\caption*{$20^{\circ}-65^{\circ}$}
\end{subfigure}
\begin{subfigure}[b]{0.326\linewidth}
\centering
\includegraphics[width=1\linewidth]{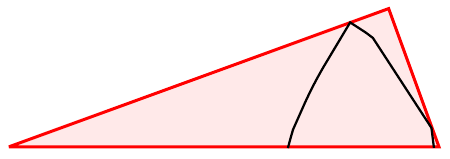}
\caption*{$20^{\circ}-70^{\circ}$}
\end{subfigure}
\begin{subfigure}[b]{0.326\linewidth}
\centering
\includegraphics[width=1\linewidth]{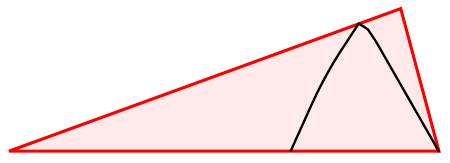}
\caption*{$20^{\circ}-75^{\circ}$}
\end{subfigure}
\begin{subfigure}[b]{0.326\linewidth}
\centering
\includegraphics[width=1\linewidth]{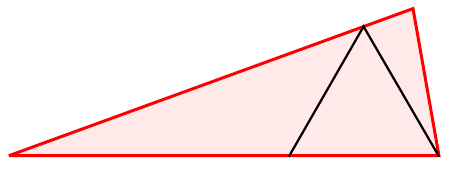}
\caption*{$20^{\circ}-80^{\circ}$}
\end{subfigure}
\begin{subfigure}[b]{0.326\linewidth}
\centering
\includegraphics[width=1\linewidth]{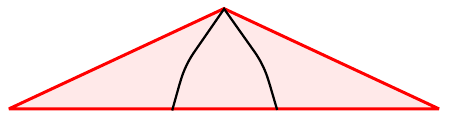}
\caption*{$25^{\circ}-25^{\circ}$}
\end{subfigure}
\caption{Continued results of triangle covering curve (Black curve is escape path, red curve is forest boundary, and caption is the base angle)}
\end{figure}

\begin{figure}[p]
\ContinuedFloat
\centering
\begin{subfigure}[b]{0.326\linewidth}
\centering
\includegraphics[width=1\linewidth]{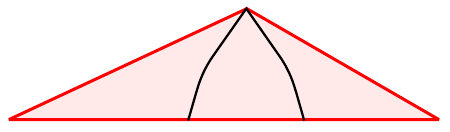}
\caption*{$25^{\circ}-30^{\circ}$}
\end{subfigure}
\begin{subfigure}[b]{0.326\linewidth}
\centering
\includegraphics[width=1\linewidth]{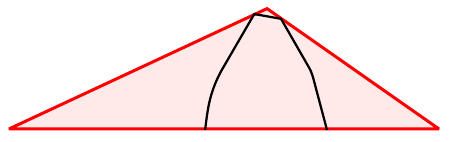}
\caption*{$25^{\circ}-35^{\circ}$}
\end{subfigure}
\begin{subfigure}[b]{0.326\linewidth}
\centering
\includegraphics[width=1\linewidth]{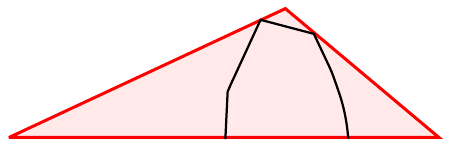}
\caption*{$25^{\circ}-40^{\circ}$}
\end{subfigure}
\begin{subfigure}[b]{0.326\linewidth}
\centering
\includegraphics[width=1\linewidth]{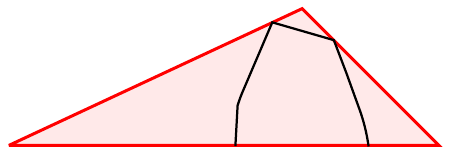}
\caption*{$25^{\circ}-45^{\circ}$}
\end{subfigure}
\begin{subfigure}[b]{0.326\linewidth}
\centering
\includegraphics[width=1\linewidth]{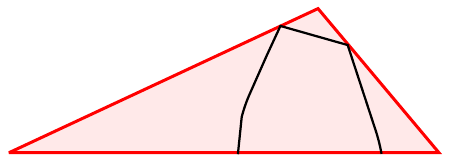}
\caption*{$25^{\circ}-50^{\circ}$}
\end{subfigure}
\begin{subfigure}[b]{0.326\linewidth}
\centering
\includegraphics[width=1\linewidth]{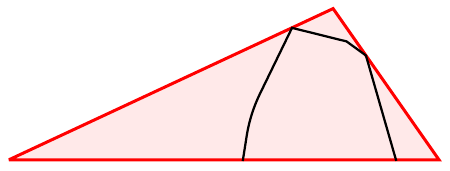}
\caption*{$25^{\circ}-55^{\circ}$}
\end{subfigure}
\begin{subfigure}[b]{0.326\linewidth}
\centering
\includegraphics[width=1\linewidth]{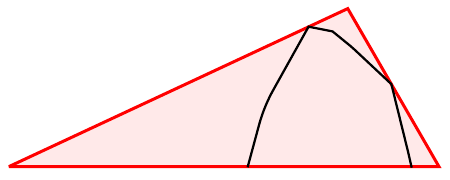}
\caption*{$25^{\circ}-60^{\circ}$}
\end{subfigure}
\begin{subfigure}[b]{0.326\linewidth}
\centering
\includegraphics[width=1\linewidth]{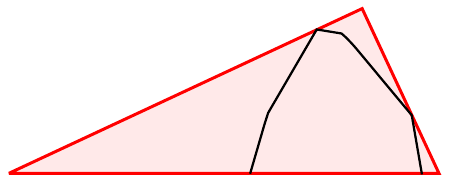}
\caption*{$25^{\circ}-65^{\circ}$}
\end{subfigure}
\begin{subfigure}[b]{0.326\linewidth}
\centering
\includegraphics[width=1\linewidth]{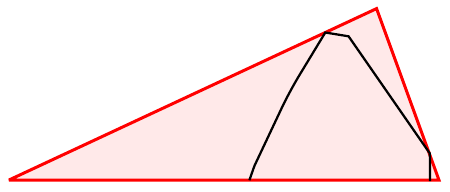}
\caption*{$25^{\circ}-70^{\circ}$}
\end{subfigure}
\begin{subfigure}[b]{0.326\linewidth}
\centering
\includegraphics[width=1\linewidth]{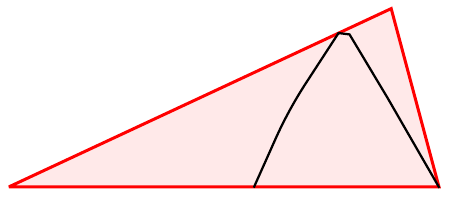}
\caption*{$25^{\circ}-75^{\circ}$}
\end{subfigure}
\begin{subfigure}[b]{0.326\linewidth}
\centering
\includegraphics[width=1\linewidth]{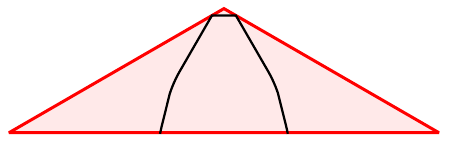}
\caption*{$30^{\circ}-30^{\circ}$}
\end{subfigure}
\begin{subfigure}[b]{0.326\linewidth}
\centering
\includegraphics[width=1\linewidth]{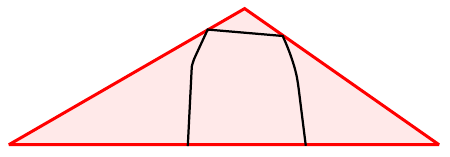}
\caption*{$30^{\circ}-35^{\circ}$}
\end{subfigure}
\begin{subfigure}[b]{0.326\linewidth}
\centering
\includegraphics[width=1\linewidth]{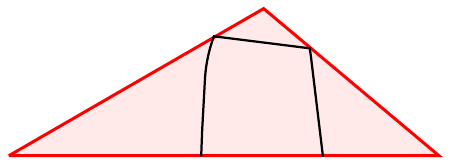}
\caption*{$30^{\circ}-40^{\circ}$}
\end{subfigure}
\begin{subfigure}[b]{0.326\linewidth}
\centering
\includegraphics[width=1\linewidth]{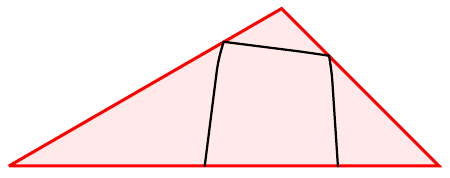}
\caption*{$30^{\circ}-45^{\circ}$}
\end{subfigure}
\begin{subfigure}[b]{0.326\linewidth}
\centering
\includegraphics[width=1\linewidth]{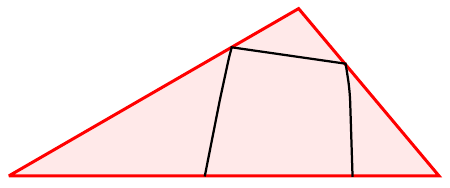}
\caption*{$30^{\circ}-50^{\circ}$}
\end{subfigure}
\begin{subfigure}[b]{0.326\linewidth}
\centering
\includegraphics[width=1\linewidth]{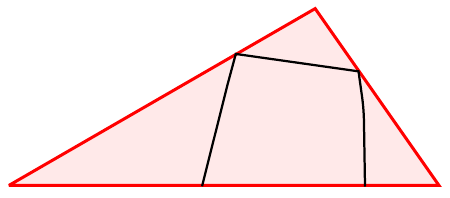}
\caption*{$30^{\circ}-55^{\circ}$}
\end{subfigure}
\begin{subfigure}[b]{0.326\linewidth}
\centering
\includegraphics[width=1\linewidth]{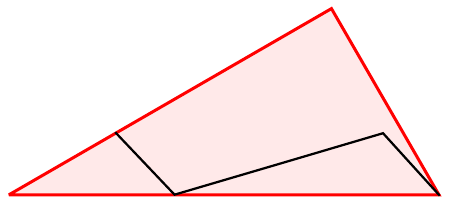}
\caption*{$30^{\circ}-60^{\circ}$}
\end{subfigure}
\begin{subfigure}[b]{0.326\linewidth}
\centering
\includegraphics[width=1\linewidth]{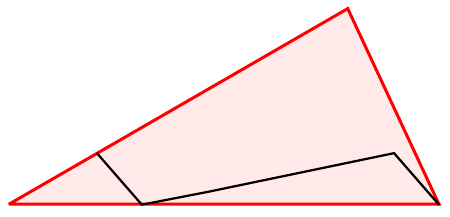}
\caption*{$30^{\circ}-65^{\circ}$}
\end{subfigure}

\caption{Continued results of triangle covering curve (Black curve is escape path, red curve is forest boundary, and caption is the base angle)}
\end{figure}

\begin{figure}[p]
\ContinuedFloat
\centering
\begin{subfigure}[b]{0.326\linewidth}
\centering
\includegraphics[width=1\linewidth]{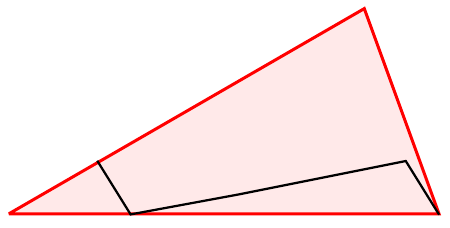}
\caption*{$30^{\circ}-70^{\circ}$}
\end{subfigure}
\begin{subfigure}[b]{0.326\linewidth}
\centering
\includegraphics[width=1\linewidth]{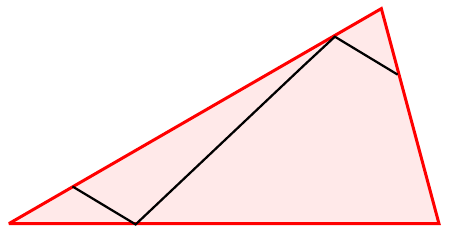}
\caption*{$30^{\circ}-75^{\circ}$}
\end{subfigure}
\begin{subfigure}[b]{0.326\linewidth}
\centering
\includegraphics[width=1\linewidth]{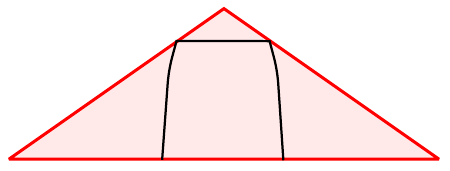}
\caption*{$35^{\circ}-35^{\circ}$}
\end{subfigure}
\begin{subfigure}[b]{0.326\linewidth}
\centering
\includegraphics[width=1\linewidth]{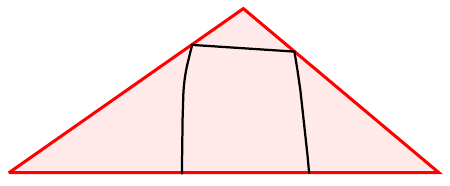}
\caption*{$35^{\circ}-40^{\circ}$}
\end{subfigure}
\begin{subfigure}[b]{0.326\linewidth}
\centering
\includegraphics[width=1\linewidth]{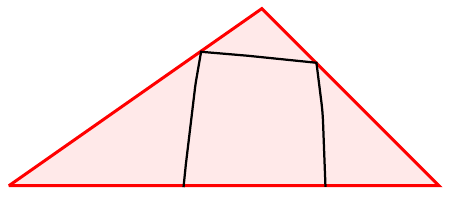}
\caption*{$35^{\circ}-45^{\circ}$}
\end{subfigure}
\begin{subfigure}[b]{0.326\linewidth}
\centering
\includegraphics[width=1\linewidth]{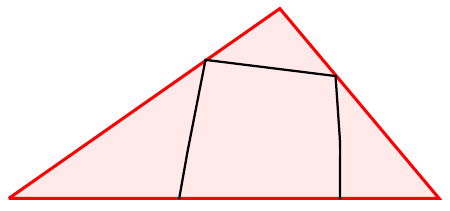}
\caption*{$35^{\circ}-50^{\circ}$}
\end{subfigure}
\begin{subfigure}[b]{0.326\linewidth}
\centering
\includegraphics[width=1\linewidth]{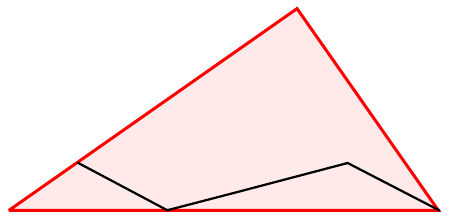}
\caption*{$35^{\circ}-55^{\circ}$}
\end{subfigure}
\begin{subfigure}[b]{0.326\linewidth}
\centering
\includegraphics[width=1\linewidth]{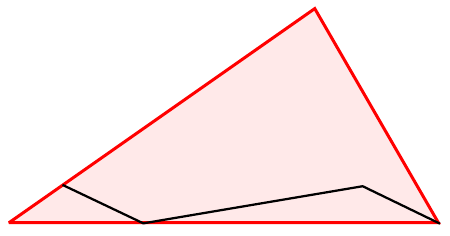}
\caption*{$35^{\circ}-60^{\circ}$}
\end{subfigure}
\begin{subfigure}[b]{0.326\linewidth}
\centering
\includegraphics[width=1\linewidth]{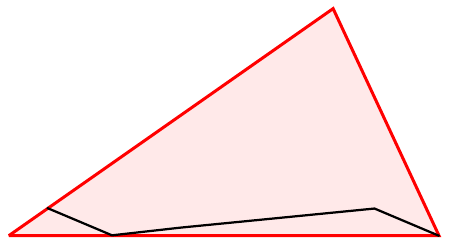}
\caption*{$35^{\circ}-65^{\circ}$}
\end{subfigure}
\begin{subfigure}[b]{0.326\linewidth}
\centering
\includegraphics[width=1\linewidth]{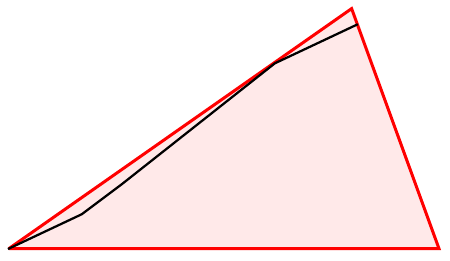}
\caption*{$35^{\circ}-70^{\circ}$}
\end{subfigure}
\begin{subfigure}[b]{0.326\linewidth}
\centering
\includegraphics[width=1\linewidth]{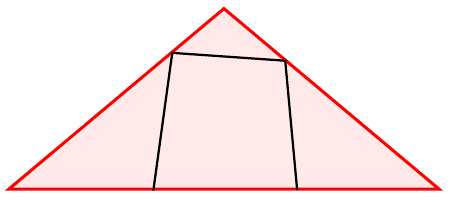}
\caption*{$40^{\circ}-40^{\circ}$}
\end{subfigure}
\begin{subfigure}[b]{0.326\linewidth}
\centering
\includegraphics[width=1\linewidth]{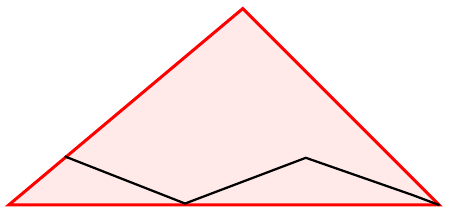}
\caption*{$40^{\circ}-45^{\circ}$}
\end{subfigure}
\begin{subfigure}[b]{0.326\linewidth}
\centering
\includegraphics[width=1\linewidth]{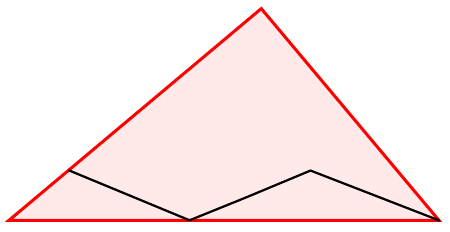}
\caption*{$40^{\circ}-50^{\circ}$}
\end{subfigure}
\begin{subfigure}[b]{0.326\linewidth}
\centering
\includegraphics[width=1\linewidth]{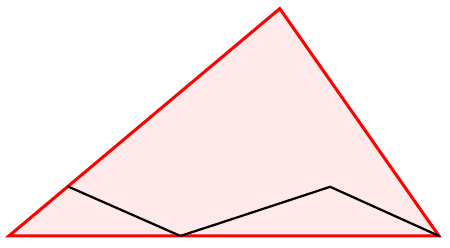}
\caption*{$40^{\circ}-55^{\circ}$}
\end{subfigure}
\begin{subfigure}[b]{0.326\linewidth}
\centering
\includegraphics[width=1\linewidth]{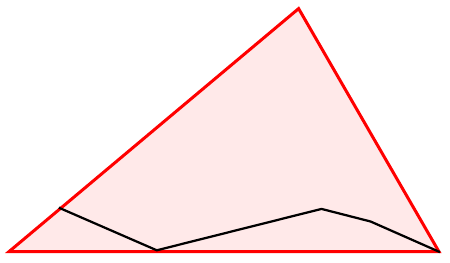}
\caption*{$40^{\circ}-60^{\circ}$}
\end{subfigure}

\caption{Continued results of triangle covering curve (Black curve is escape path, red curve is forest boundary, and caption is the base angle)}
\end{figure}

\begin{figure}[p]
\ContinuedFloat
\centering
\begin{subfigure}[b]{0.326\linewidth}
\centering
\includegraphics[width=1\linewidth]{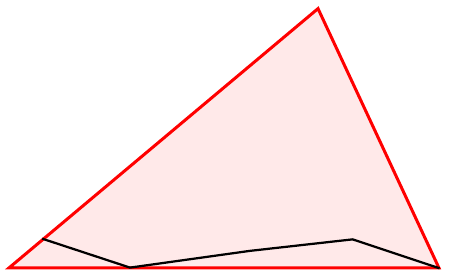}
\caption*{$40^{\circ}-65^{\circ}$}
\end{subfigure}
\begin{subfigure}[b]{0.326\linewidth}
\centering
\includegraphics[width=1\linewidth]{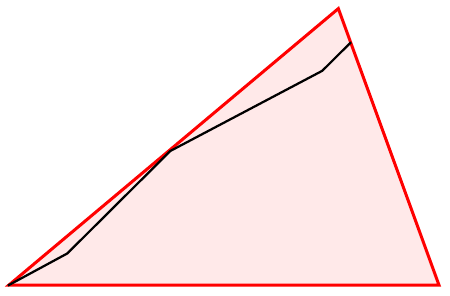}
\caption*{$40^{\circ}-70^{\circ}$}
\end{subfigure}
\begin{subfigure}[b]{0.326\linewidth}
\centering
\includegraphics[width=1\linewidth]{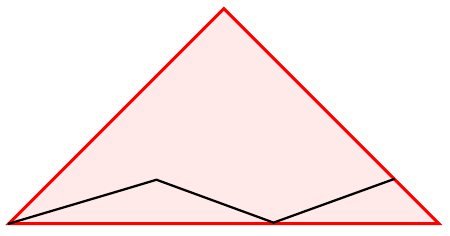}
\caption*{$45^{\circ}-45^{\circ}$}
\end{subfigure}
\begin{subfigure}[b]{0.326\linewidth}
\centering
\includegraphics[width=1\linewidth]{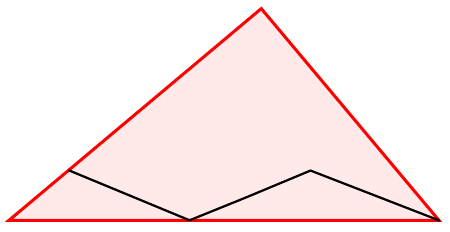}
\caption*{$45^{\circ}-50^{\circ}$}
\end{subfigure}
\begin{subfigure}[b]{0.326\linewidth}
\centering
\includegraphics[width=1\linewidth]{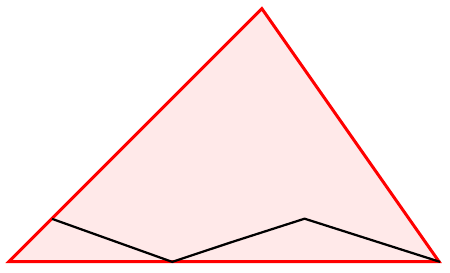}
\caption*{$45^{\circ}-55^{\circ}$}
\end{subfigure}
\begin{subfigure}[b]{0.326\linewidth}
\centering
\includegraphics[width=1\linewidth]{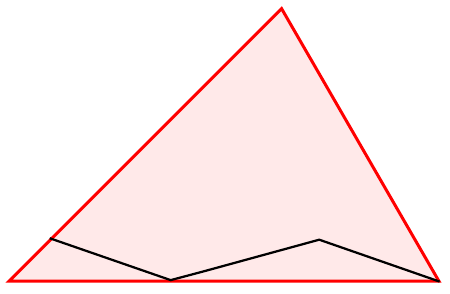}
\caption*{$45^{\circ}-60^{\circ}$}
\end{subfigure}
\begin{subfigure}[b]{0.326\linewidth}
\centering
\includegraphics[width=1\linewidth]{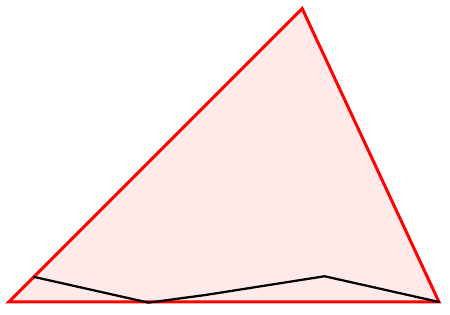}
\caption*{$45^{\circ}-65^{\circ}$}
\end{subfigure}
\begin{subfigure}[b]{0.326\linewidth}
\centering
\includegraphics[width=1\linewidth]{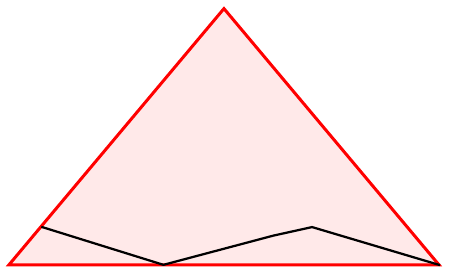}
\caption*{$50^{\circ}-50^{\circ}$}
\end{subfigure}
\begin{subfigure}[b]{0.326\linewidth}
\centering
\includegraphics[width=1\linewidth]{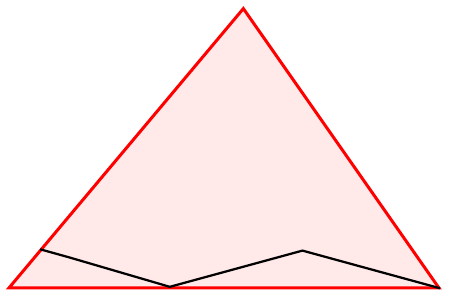}
\caption*{$50^{\circ}-55^{\circ}$}
\end{subfigure}
\begin{subfigure}[b]{0.326\linewidth}
\centering
\includegraphics[width=1\linewidth]{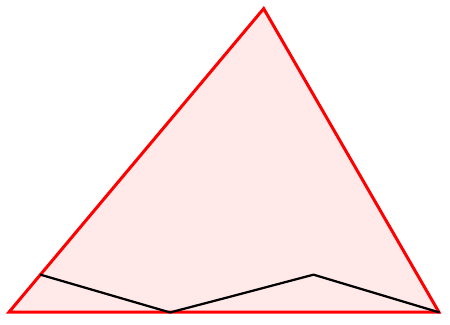}
\caption*{$50^{\circ}-60^{\circ}$}
\end{subfigure}
\begin{subfigure}[b]{0.326\linewidth}
\centering
\includegraphics[width=1\linewidth]{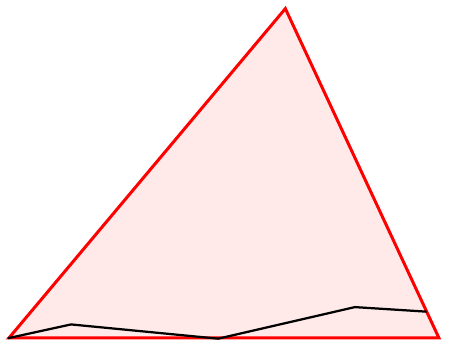}
\caption*{$50^{\circ}-65^{\circ}$}
\end{subfigure}
\begin{subfigure}[b]{0.326\linewidth}
\centering
\includegraphics[width=1\linewidth]{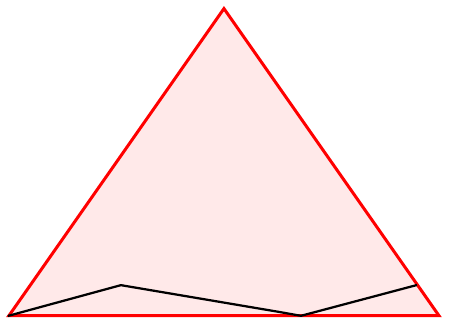}
\caption*{$55^{\circ}-55^{\circ}$}
\end{subfigure}
\begin{subfigure}[b]{0.326\linewidth}
\centering
\includegraphics[width=1\linewidth]{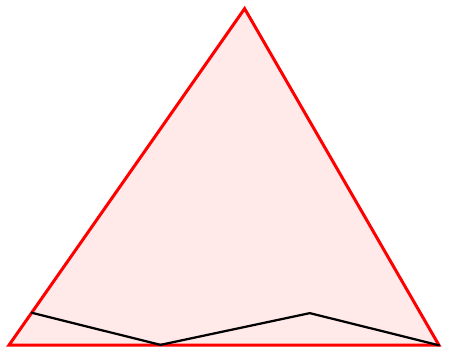}
\caption*{$55^{\circ}-60^{\circ}$}
\end{subfigure}
\begin{subfigure}[b]{0.326\linewidth}
\centering
\includegraphics[width=1\linewidth]{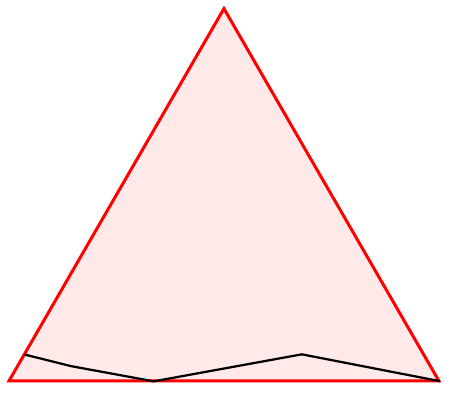}
\caption*{$60^{\circ}-60^{\circ}$}
\end{subfigure}
\caption{Continued results of triangle covering curve (Black curve is escape path, red curve is forest boundary, and caption is the base angle)}
\end{figure}

Using the duality between Bellman's lost-in-a-forest problem and Moser's worm problem \cite{Finch2004} \cite{Deng2024}, we can derive that, for arbitrary triangle, if the optimal curve length solution in Eq (8) is $L$ , the upper bound of Moser's worm problem is 
\begin{equation}
\frac{1}{2 L^2 \left(\frac{1}{\tan \alpha}+\frac{1}{\tan \beta }\right)}
\end{equation}

\begin{figure}
    \centering
    \includegraphics[width=0.56\linewidth]{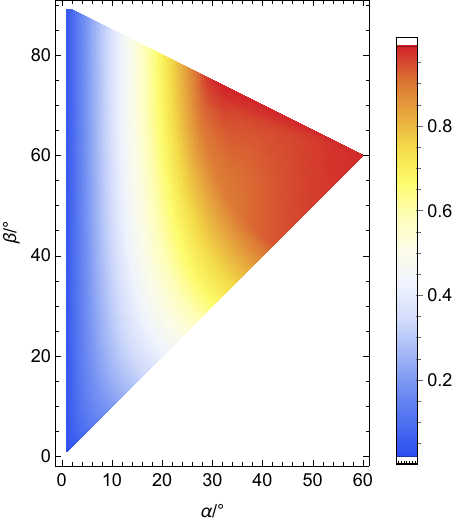}
    \includegraphics[width=0.56\linewidth]{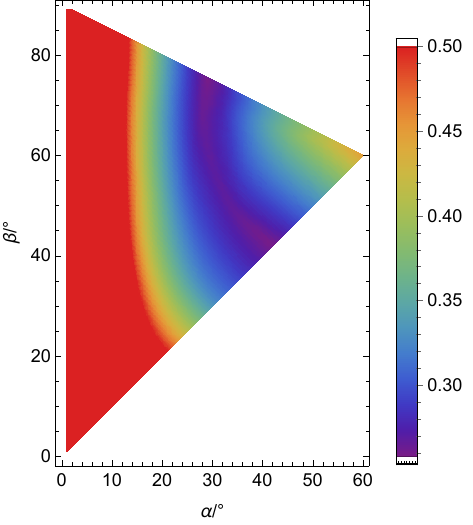}
    \caption{Density plot of escape path length of Bellman's forest (top) and area of Moser’s worm cover (bottom) in various base angles}
    \label{fig:placeholder}
\end{figure}

Figure 3 shows the density plot of escape path length of Bellman's triangle forest and area of Moser’s worm triangle cover calculated from Eq (22) in various base angles. We can observe a band exhibiting a noticeably smaller area for Moser's worm cover. Two isosceles triangles mentioned by Gibbs \cite{Gibbs2016} and 30-60 right triangle mentioned recently \cite{Wichiramala2026} are located in this area. Therefore, more refined numerical results and certificates for each angle can yield better upper bound for Moser's Worm Problem.

\subsection{Triangle covering closed polygonal curve}

Figure 4 shows the numerical results of triangle covering closed polygonal curve including triangle, quadrilateral, pentagon. We solved the optimization in Eq (13) with discretizing interval $[0, 2\pi]$ into 1440 parts. The figure shows the results when base angles are multiples of 10 degrees, and $0<\alpha \le \beta \le \pi - \alpha - \beta$, without duplications. The convergence criteria are the same as those in Section 3.1.

The results can provide insights into fit and cover of closed curves \cite{Brass2005} \cite {Wetzel2003} \cite {Füredi2011}, the general problem of universal covering a polygon with triangle or polygon, and fitting a polygon into triangle or polygon \cite{Wetzel2003}.

Future work of this paper involves using high-precision numerical methods to solve for each distinct angle for arbitrary triangles, and obtaining certificate of each optimized results.

\begin{figure}[p]
\centering
\begin{subfigure}[b]{0.326\linewidth}
\centering
\includegraphics[width=1\linewidth]{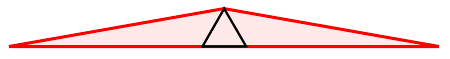}
\caption*{$10^{\circ}-10^{\circ}$}
\end{subfigure}
\begin{subfigure}[b]{0.326\linewidth}
\centering
\includegraphics[width=1\linewidth]{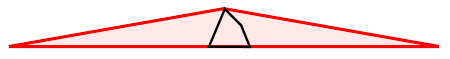}
\caption*{$10^{\circ}-10^{\circ}$}
\end{subfigure}
\begin{subfigure}[b]{0.326\linewidth}
\centering
\includegraphics[width=1\linewidth]{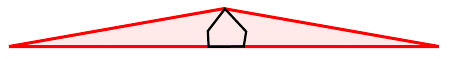}
\caption*{$10^{\circ}-10^{\circ}$}
\end{subfigure}
\begin{subfigure}[b]{0.326\linewidth}
\centering
\includegraphics[width=1\linewidth]{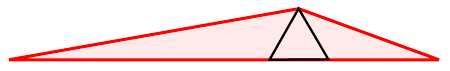}
\caption*{$10^{\circ}-20^{\circ}$}
\end{subfigure}
\begin{subfigure}[b]{0.326\linewidth}
\centering
\includegraphics[width=1\linewidth]{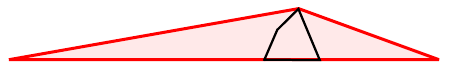}
\caption*{$10^{\circ}-20^{\circ}$}
\end{subfigure}
\begin{subfigure}[b]{0.326\linewidth}
\centering
\includegraphics[width=1\linewidth]{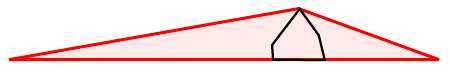}
\caption*{$10^{\circ}-20^{\circ}$}
\end{subfigure}
\begin{subfigure}[b]{0.326\linewidth}
\centering
\includegraphics[width=1\linewidth]{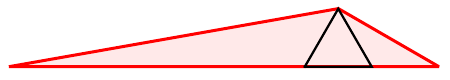}
\caption*{$10^{\circ}-30^{\circ}$}
\end{subfigure}
\begin{subfigure}[b]{0.326\linewidth}
\centering
\includegraphics[width=1\linewidth]{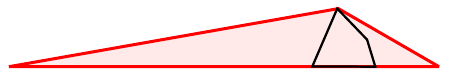}
\caption*{$10^{\circ}-30^{\circ}$}
\end{subfigure}
\begin{subfigure}[b]{0.326\linewidth}
\centering
\includegraphics[width=1\linewidth]{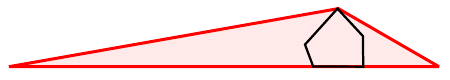}
\caption*{$10^{\circ}-30^{\circ}$}
\end{subfigure}
\begin{subfigure}[b]{0.326\linewidth}
\centering
\includegraphics[width=1\linewidth]{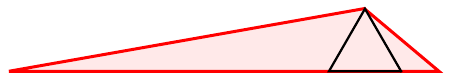}
\caption*{$10^{\circ}-40^{\circ}$}
\end{subfigure}
\begin{subfigure}[b]{0.326\linewidth}
\centering
\includegraphics[width=1\linewidth]{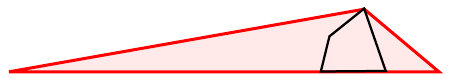}
\caption*{$10^{\circ}-40^{\circ}$}
\end{subfigure}
\begin{subfigure}[b]{0.326\linewidth}
\centering
\includegraphics[width=1\linewidth]{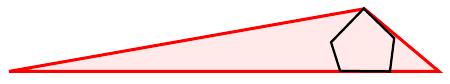}
\caption*{$10^{\circ}-40^{\circ}$}
\end{subfigure}
\begin{subfigure}[b]{0.326\linewidth}
\centering
\includegraphics[width=1\linewidth]{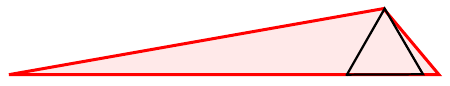}
\caption*{$10^{\circ}-50^{\circ}$}
\end{subfigure}
\begin{subfigure}[b]{0.326\linewidth}
\centering
\includegraphics[width=1\linewidth]{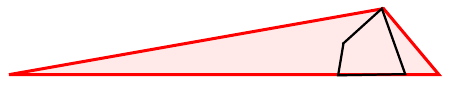}
\caption*{$10^{\circ}-50^{\circ}$}
\end{subfigure}
\begin{subfigure}[b]{0.326\linewidth}
\centering
\includegraphics[width=1\linewidth]{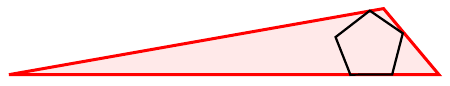}
\caption*{$10^{\circ}-50^{\circ}$}
\end{subfigure}
\begin{subfigure}[b]{0.326\linewidth}
\centering
\includegraphics[width=1\linewidth]{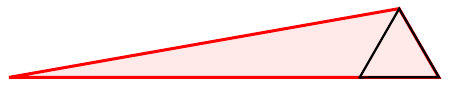}
\caption*{$10^{\circ}-60^{\circ}$}
\end{subfigure}
\begin{subfigure}[b]{0.326\linewidth}
\centering
\includegraphics[width=1\linewidth]{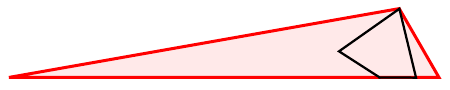}
\caption*{$10^{\circ}-60^{\circ}$}
\end{subfigure}
\begin{subfigure}[b]{0.326\linewidth}
\centering
\includegraphics[width=1\linewidth]{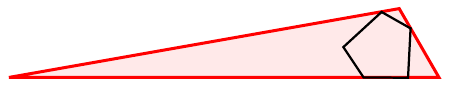}
\caption*{$10^{\circ}-60^{\circ}$}
\end{subfigure}
\begin{subfigure}[b]{0.326\linewidth}
\centering
\includegraphics[width=1\linewidth]{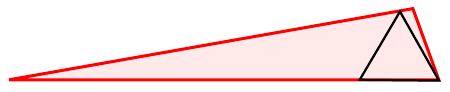}
\caption*{$10^{\circ}-70^{\circ}$}
\end{subfigure}
\begin{subfigure}[b]{0.326\linewidth}
\centering
\includegraphics[width=1\linewidth]{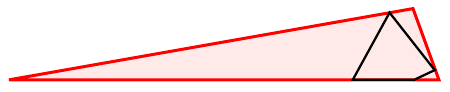}
\caption*{$10^{\circ}-70^{\circ}$}
\end{subfigure}
\begin{subfigure}[b]{0.326\linewidth}
\centering
\includegraphics[width=1\linewidth]{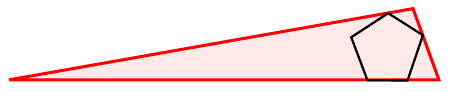}
\caption*{$10^{\circ}-70^{\circ}$}
\end{subfigure}
\begin{subfigure}[b]{0.326\linewidth}
\centering
\includegraphics[width=1\linewidth]{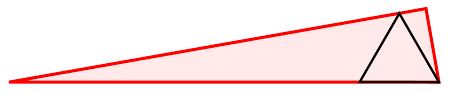}
\caption*{$10^{\circ}-80^{\circ}$}
\end{subfigure}
\begin{subfigure}[b]{0.326\linewidth}
\centering
\includegraphics[width=1\linewidth]{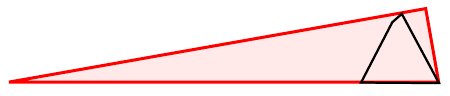}
\caption*{$10^{\circ}-80^{\circ}$}
\end{subfigure}
\begin{subfigure}[b]{0.326\linewidth}
\centering
\includegraphics[width=1\linewidth]{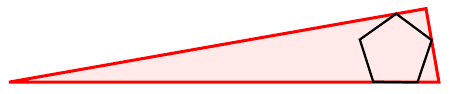}
\caption*{$10^{\circ}-80^{\circ}$}
\end{subfigure}
\begin{subfigure}[b]{0.326\linewidth}
\centering
\includegraphics[width=1\linewidth]{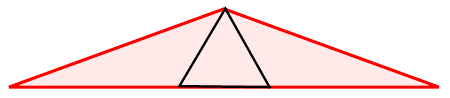}
\caption*{$20^{\circ}-20^{\circ}$}
\end{subfigure}
\begin{subfigure}[b]{0.326\linewidth}
\centering
\includegraphics[width=1\linewidth]{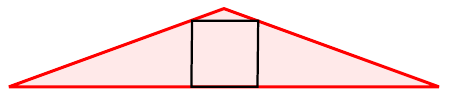}
\caption*{$20^{\circ}-20^{\circ}$}
\end{subfigure}
\begin{subfigure}[b]{0.326\linewidth}
\centering
\includegraphics[width=1\linewidth]{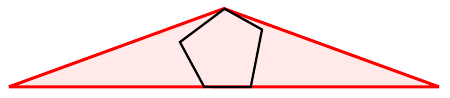}
\caption*{$20^{\circ}-20^{\circ}$}
\end{subfigure}
\begin{subfigure}[b]{0.326\linewidth}
\centering
\includegraphics[width=1\linewidth]{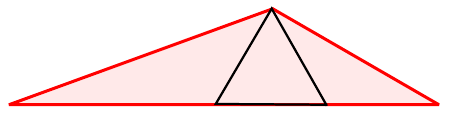}
\caption*{$20^{\circ}-30^{\circ}$}
\end{subfigure}
\begin{subfigure}[b]{0.326\linewidth}
\centering
\includegraphics[width=1\linewidth]{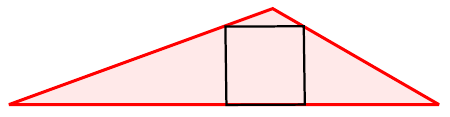}
\caption*{$20^{\circ}-30^{\circ}$}
\end{subfigure}
\begin{subfigure}[b]{0.326\linewidth}
\centering
\includegraphics[width=1\linewidth]{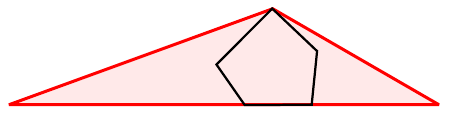}
\caption*{$20^{\circ}-30^{\circ}$}
\end{subfigure}
\begin{subfigure}[b]{0.326\linewidth}
\centering
\includegraphics[width=1\linewidth]{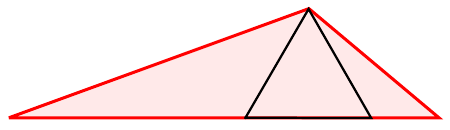}
\caption*{$20^{\circ}-40^{\circ}$}
\end{subfigure}
\begin{subfigure}[b]{0.326\linewidth}
\centering
\includegraphics[width=1\linewidth]{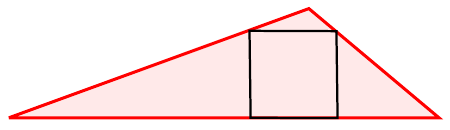}
\caption*{$20^{\circ}-40^{\circ}$}
\end{subfigure}
\begin{subfigure}[b]{0.326\linewidth}
\centering
\includegraphics[width=1\linewidth]{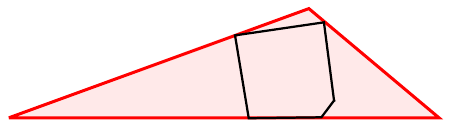}
\caption*{$20^{\circ}-40^{\circ}$}
\end{subfigure}
\caption{Results of triangle covering closed polygonal curve including triangle, quadrilateral, pentagon (Black curve is escape path, red curve is forest boundary, and caption is the base angle)}
\end{figure}

\begin{figure}[p]
\ContinuedFloat
\centering
\begin{subfigure}[b]{0.326\linewidth}
\centering
\includegraphics[width=1\linewidth]{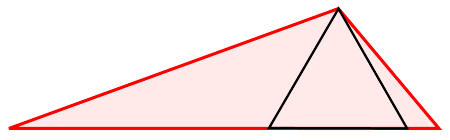}
\caption*{$20^{\circ}-50^{\circ}$}
\end{subfigure}
\begin{subfigure}[b]{0.326\linewidth}
\centering
\includegraphics[width=1\linewidth]{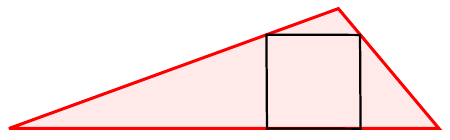}
\caption*{$20^{\circ}-50^{\circ}$}
\end{subfigure}
\begin{subfigure}[b]{0.326\linewidth}
\centering
\includegraphics[width=1\linewidth]{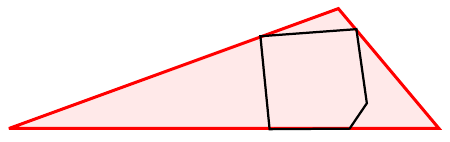}
\caption*{$20^{\circ}-50^{\circ}$}
\end{subfigure}
\begin{subfigure}[b]{0.326\linewidth}
\centering
\includegraphics[width=1\linewidth]{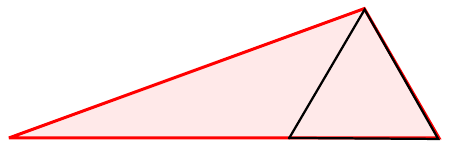}
\caption*{$20^{\circ}-60^{\circ}$}
\end{subfigure}
\begin{subfigure}[b]{0.326\linewidth}
\centering
\includegraphics[width=1\linewidth]{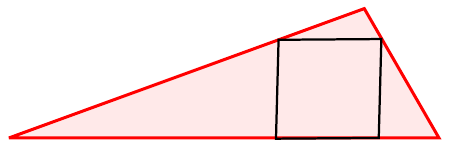}
\caption*{$20^{\circ}-60^{\circ}$}
\end{subfigure}
\begin{subfigure}[b]{0.326\linewidth}
\centering
\includegraphics[width=1\linewidth]{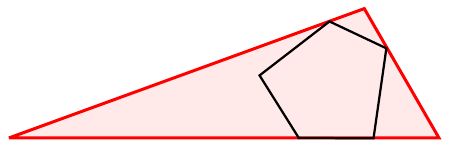}
\caption*{$20^{\circ}-60^{\circ}$}
\end{subfigure}
\begin{subfigure}[b]{0.326\linewidth}
\centering
\includegraphics[width=1\linewidth]{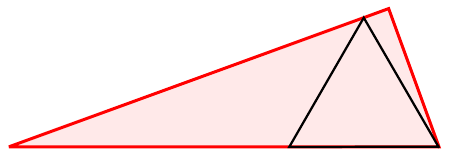}
\caption*{$20^{\circ}-70^{\circ}$}
\end{subfigure}
\begin{subfigure}[b]{0.326\linewidth}
\centering
\includegraphics[width=1\linewidth]{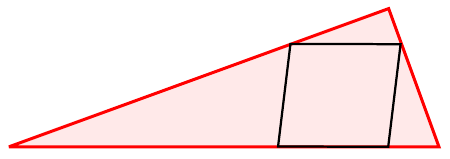}
\caption*{$20^{\circ}-70^{\circ}$}
\end{subfigure}
\begin{subfigure}[b]{0.326\linewidth}
\centering
\includegraphics[width=1\linewidth]{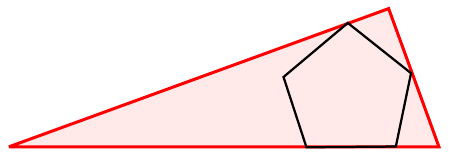}
\caption*{$20^{\circ}-70^{\circ}$}
\end{subfigure}
\begin{subfigure}[b]{0.326\linewidth}
\centering
\includegraphics[width=1\linewidth]{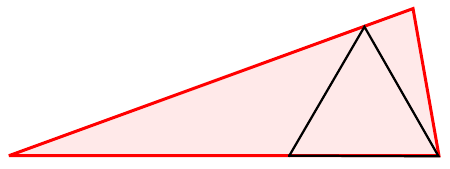}
\caption*{$20^{\circ}-80^{\circ}$}
\end{subfigure}
\begin{subfigure}[b]{0.326\linewidth}
\centering
\includegraphics[width=1\linewidth]{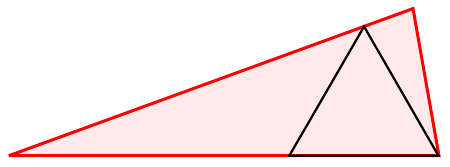}
\caption*{$20^{\circ}-80^{\circ}$}
\end{subfigure}
\begin{subfigure}[b]{0.326\linewidth}
\centering
\includegraphics[width=1\linewidth]{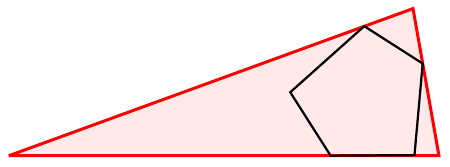}
\caption*{$20^{\circ}-80^{\circ}$}
\end{subfigure}
\begin{subfigure}[b]{0.326\linewidth}
\centering
\includegraphics[width=1\linewidth]{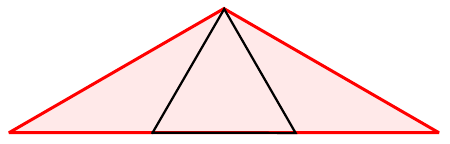}
\caption*{$30^{\circ}-30^{\circ}$}
\end{subfigure}
\begin{subfigure}[b]{0.326\linewidth}
\centering
\includegraphics[width=1\linewidth]{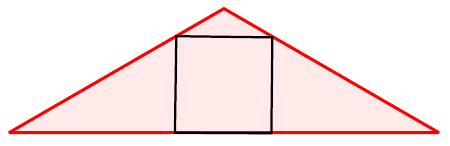}
\caption*{$30^{\circ}-30^{\circ}$}
\end{subfigure}
\begin{subfigure}[b]{0.326\linewidth}
\centering
\includegraphics[width=1\linewidth]{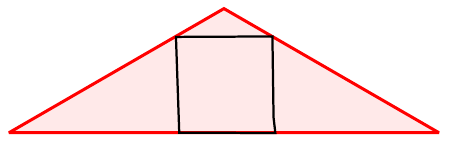}
\caption*{$30^{\circ}-30^{\circ}$}
\end{subfigure}
\begin{subfigure}[b]{0.326\linewidth}
\centering
\includegraphics[width=1\linewidth]{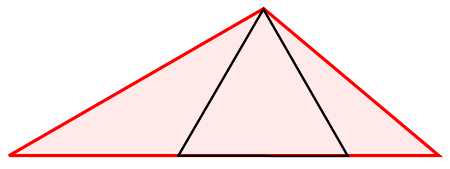}
\caption*{$30^{\circ}-40^{\circ}$}
\end{subfigure}
\begin{subfigure}[b]{0.326\linewidth}
\centering
\includegraphics[width=1\linewidth]{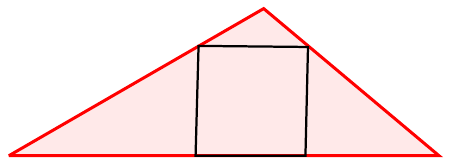}
\caption*{$30^{\circ}-40^{\circ}$}
\end{subfigure}
\begin{subfigure}[b]{0.326\linewidth}
\centering
\includegraphics[width=1\linewidth]{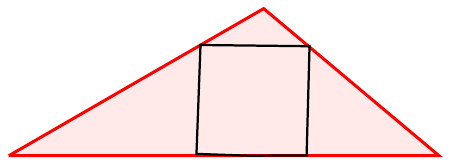}
\caption*{$30^{\circ}-40^{\circ}$}
\end{subfigure}
\begin{subfigure}[b]{0.326\linewidth}
\centering
\includegraphics[width=1\linewidth]{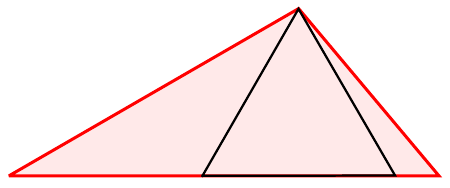}
\caption*{$30^{\circ}-50^{\circ}$}
\end{subfigure}
\begin{subfigure}[b]{0.326\linewidth}
\centering
\includegraphics[width=1\linewidth]{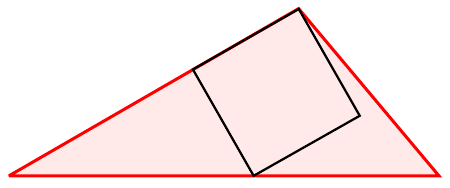}
\caption*{$30^{\circ}-50^{\circ}$}
\end{subfigure}
\begin{subfigure}[b]{0.326\linewidth}
\centering
\includegraphics[width=1\linewidth]{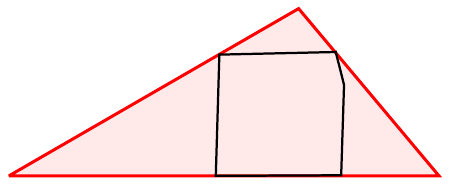}
\caption*{$30^{\circ}-50^{\circ}$}
\end{subfigure}
\caption{Continued results of triangle covering closed polygonal curve including triangle, quadrilateral, pentagon (Black curve is escape path, red curve is forest boundary, and caption is the base angle)}
\end{figure}

\begin{figure}[p]
\ContinuedFloat
\centering
\begin{subfigure}[b]{0.326\linewidth}
\centering
\includegraphics[width=1\linewidth]{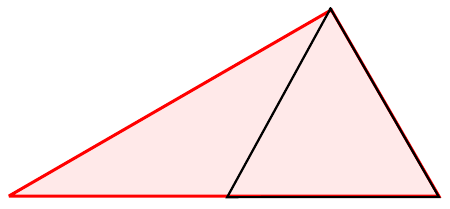}
\caption*{$30^{\circ}-60^{\circ}$}
\end{subfigure}
\begin{subfigure}[b]{0.326\linewidth}
\centering
\includegraphics[width=1\linewidth]{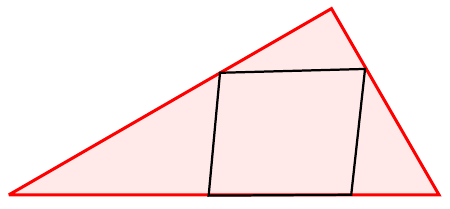}
\caption*{$30^{\circ}-60^{\circ}$}
\end{subfigure}
\begin{subfigure}[b]{0.326\linewidth}
\centering
\includegraphics[width=1\linewidth]{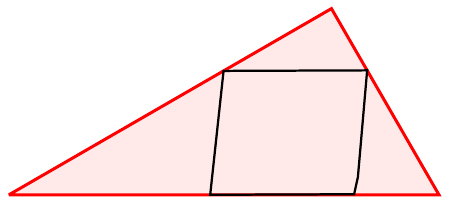}
\caption*{$30^{\circ}-60^{\circ}$}
\end{subfigure}
\begin{subfigure}[b]{0.326\linewidth}
\centering
\includegraphics[width=1\linewidth]{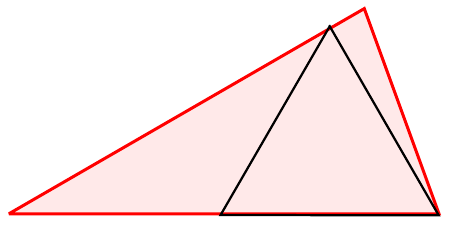}
\caption*{$30^{\circ}-70^{\circ}$}
\end{subfigure}
\begin{subfigure}[b]{0.326\linewidth}
\centering
\includegraphics[width=1\linewidth]{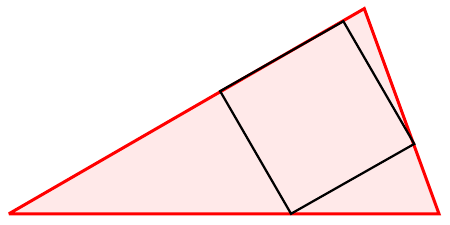}
\caption*{$30^{\circ}-70^{\circ}$}
\end{subfigure}
\begin{subfigure}[b]{0.326\linewidth}
\centering
\includegraphics[width=1\linewidth]{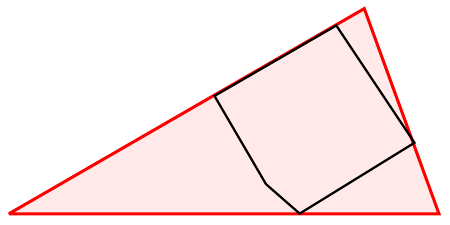}
\caption*{$30^{\circ}-70^{\circ}$}
\end{subfigure}
\begin{subfigure}[b]{0.326\linewidth}
\centering
\includegraphics[width=1\linewidth]{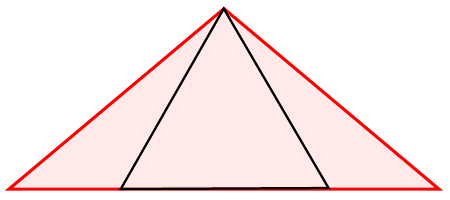}
\caption*{$40^{\circ}-40^{\circ}$}
\end{subfigure}
\begin{subfigure}[b]{0.326\linewidth}
\centering
\includegraphics[width=1\linewidth]{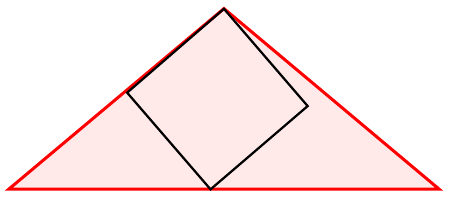}
\caption*{$40^{\circ}-40^{\circ}$}
\end{subfigure}
\begin{subfigure}[b]{0.326\linewidth}
\centering
\includegraphics[width=1\linewidth]{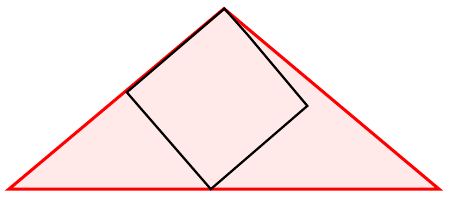}
\caption*{$40^{\circ}-40^{\circ}$}
\end{subfigure}
\begin{subfigure}[b]{0.326\linewidth}
\centering
\includegraphics[width=1\linewidth]{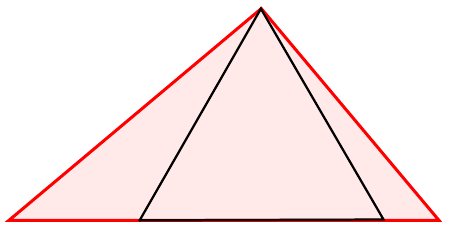}
\caption*{$40^{\circ}-50^{\circ}$}
\end{subfigure}
\begin{subfigure}[b]{0.326\linewidth}
\centering
\includegraphics[width=1\linewidth]{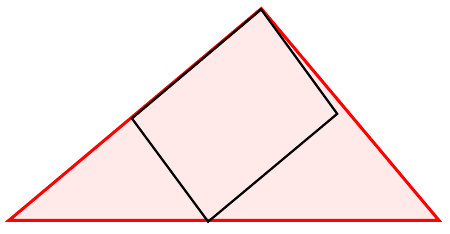}
\caption*{$40^{\circ}-50^{\circ}$}
\end{subfigure}
\begin{subfigure}[b]{0.326\linewidth}
\centering
\includegraphics[width=1\linewidth]{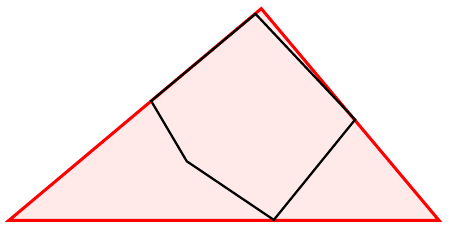}
\caption*{$40^{\circ}-50^{\circ}$}
\end{subfigure}
\begin{subfigure}[b]{0.326\linewidth}
\centering
\includegraphics[width=1\linewidth]{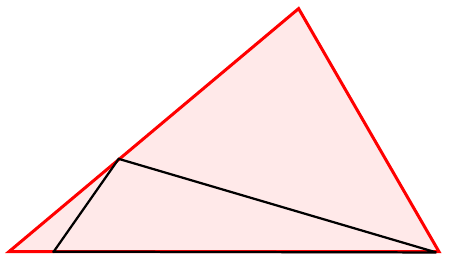}
\caption*{$40^{\circ}-60^{\circ}$}
\end{subfigure}
\begin{subfigure}[b]{0.326\linewidth}
\centering
\includegraphics[width=1\linewidth]{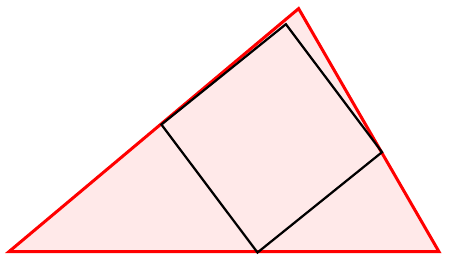}
\caption*{$40^{\circ}-60^{\circ}$}
\end{subfigure}
\begin{subfigure}[b]{0.326\linewidth}
\centering
\includegraphics[width=1\linewidth]{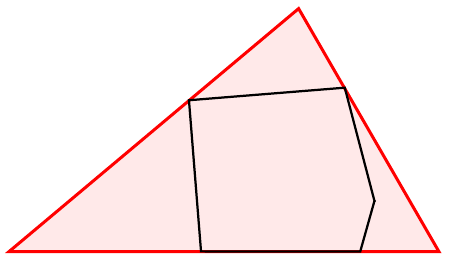}
\caption*{$40^{\circ}-60^{\circ}$}
\end{subfigure}
\caption{Continued results of triangle covering closed polygonal curve including triangle, quadrilateral, pentagon (Black curve is escape path, red curve is forest boundary, and caption is the base angle)}
\end{figure}

\begin{figure}[p]
\ContinuedFloat
\centering
\begin{subfigure}[b]{0.326\linewidth}
\centering
\includegraphics[width=1\linewidth]{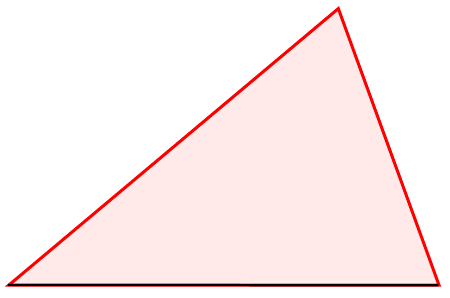}
\caption*{$40^{\circ}-70^{\circ}$}
\end{subfigure}
\begin{subfigure}[b]{0.326\linewidth}
\centering
\includegraphics[width=1\linewidth]{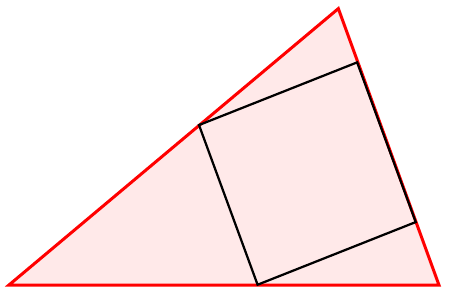}
\caption*{$40^{\circ}-70^{\circ}$}
\end{subfigure}
\begin{subfigure}[b]{0.326\linewidth}
\centering
\includegraphics[width=1\linewidth]{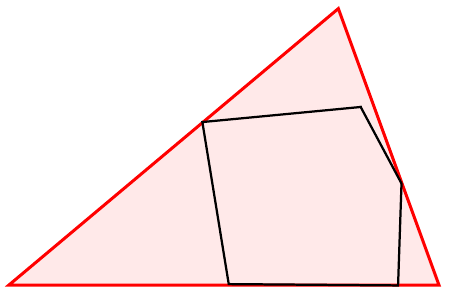}
\caption*{$40^{\circ}-70^{\circ}$}
\end{subfigure}
\begin{subfigure}[b]{0.326\linewidth}
\centering
\includegraphics[width=1\linewidth]{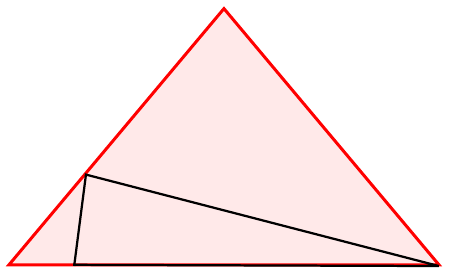}
\caption*{$50^{\circ}-50^{\circ}$}
\end{subfigure}
\begin{subfigure}[b]{0.326\linewidth}
\centering
\includegraphics[width=1\linewidth]{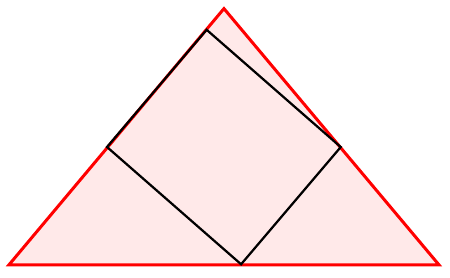}
\caption*{$50^{\circ}-50^{\circ}$}
\end{subfigure}
\begin{subfigure}[b]{0.326\linewidth}
\centering
\includegraphics[width=1\linewidth]{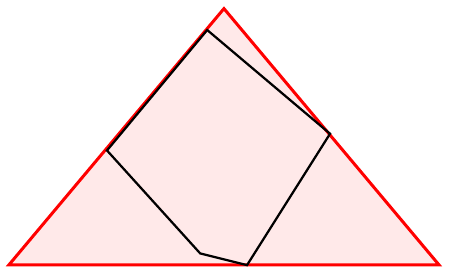}
\caption*{$50^{\circ}-50^{\circ}$}
\end{subfigure}
\begin{subfigure}[b]{0.326\linewidth}
\centering
\includegraphics[width=1\linewidth]{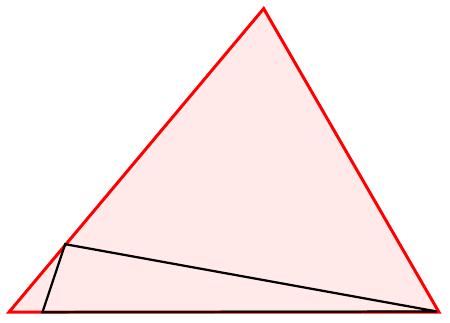}
\caption*{$50^{\circ}-60^{\circ}$}
\end{subfigure}
\begin{subfigure}[b]{0.326\linewidth}
\centering
\includegraphics[width=1\linewidth]{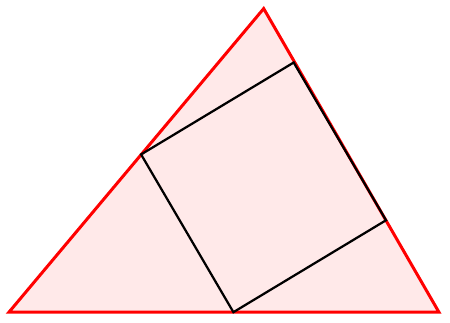}
\caption*{$50^{\circ}-60^{\circ}$}
\end{subfigure}
\begin{subfigure}[b]{0.326\linewidth}
\centering
\includegraphics[width=1\linewidth]{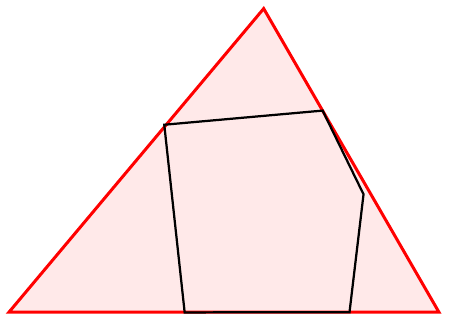}
\caption*{$50^{\circ}-60^{\circ}$}
\end{subfigure}
\begin{subfigure}[b]{0.326\linewidth}
\centering
\includegraphics[width=1\linewidth]{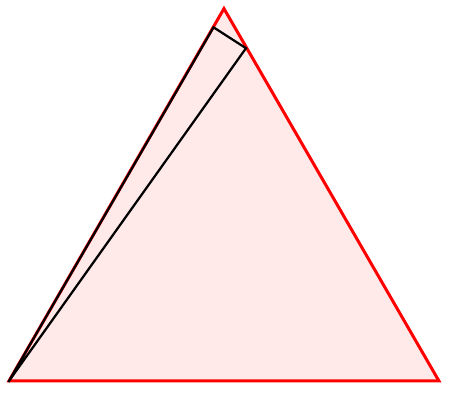}
\caption*{$60^{\circ}-60^{\circ}$}
\end{subfigure}
\begin{subfigure}[b]{0.326\linewidth}
\centering
\includegraphics[width=1\linewidth]{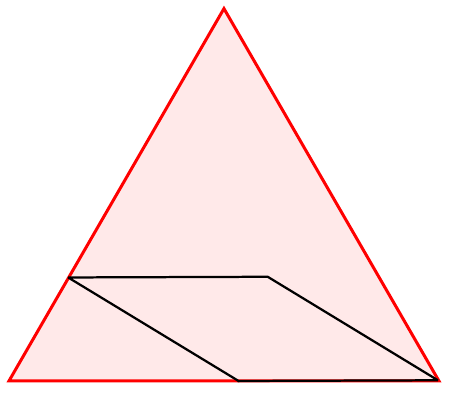}
\caption*{$60^{\circ}-60^{\circ}$}
\end{subfigure}
\begin{subfigure}[b]{0.326\linewidth}
\centering
\includegraphics[width=1\linewidth]{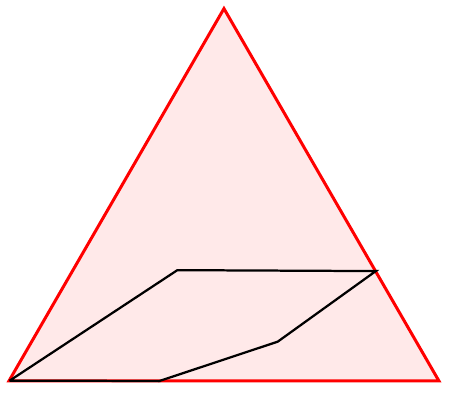}
\caption*{$60^{\circ}-60^{\circ}$}
\end{subfigure}
\caption{Continued results of triangle covering closed polygonal curve including triangle, quadrilateral, pentagon (Black curve is escape path, red curve is forest boundary, and caption is the base angle)}
\end{figure}

\section{Appendix-Formalized proofs in Lean}

The appendix provides formalized proofs of Theorems 4-8 in Lean 4 code.

The following is the Lean 4 code for Theorem 4 (Exact support function constraint characterization):
\begin{lstlisting}[style=LeanCode]
import Mathlib

noncomputable section

open Set

namespace TriangleEscape

/-!
# Exact support characterization for the triangular lost-in-a-forest problem

The formalization is split into four logically independent layers:

1. a trace/support lemma;
2. a weighted-simplex separation lemma;
3. the exact slack-coordinate description of the normalized triangle;
4. the all-starting-points and all-angles support theorem.
-/

section TraceSupport

variable {X : Type*}

/-- A number `H` is an attained support value of `p` on `Γ`. -/
def IsAttainedSupport (Γ : Set X) (p : X → ℝ) (H : ℝ) : Prop :=
  (∀ x ∈ Γ, p x ≤ H) ∧ ∃ x ∈ Γ, p x = H

/-- Strict membership in the intersection of three open half-spaces. -/
def StrictlyInside3
    (p₁ p₂ p₃ : X → ℝ) (d₁ d₂ d₃ : ℝ) (x : X) : Prop :=
  p₁ x < d₁ ∧ p₂ x < d₂ ∧ p₃ x < d₃

/-- The trace reaches the complement of the open three-half-space intersection. -/
def TraceEscapes3
    (Γ : Set X) (p₁ p₂ p₃ : X → ℝ) (d₁ d₂ d₃ : ℝ) : Prop :=
  ∃ x ∈ Γ, ¬ StrictlyInside3 p₁ p₂ p₃ d₁ d₂ d₃ x

/--
For attained support values, escape from the open intersection is equivalent to
one support reaching the corresponding offset.
-/
theorem traceEscapes3_iff_support
    {Γ : Set X} {p₁ p₂ p₃ : X → ℝ}
    {H₁ H₂ H₃ d₁ d₂ d₃ : ℝ}
    (hS₁ : IsAttainedSupport Γ p₁ H₁)
    (hS₂ : IsAttainedSupport Γ p₂ H₂)
    (hS₃ : IsAttainedSupport Γ p₃ H₃) :
    TraceEscapes3 Γ p₁ p₂ p₃ d₁ d₂ d₃ ↔
      d₁ ≤ H₁ ∨ d₂ ≤ H₂ ∨ d₃ ≤ H₃ := by
  constructor
  · rintro ⟨x, hxΓ, hxout⟩
    by_cases h₁ : d₁ ≤ p₁ x
    · exact Or.inl (h₁.trans (hS₁.1 x hxΓ))
    · by_cases h₂ : d₂ ≤ p₂ x
      · exact Or.inr (Or.inl (h₂.trans (hS₂.1 x hxΓ)))
      · by_cases h₃ : d₃ ≤ p₃ x
        · exact Or.inr (Or.inr (h₃.trans (hS₃.1 x hxΓ)))
        · exfalso
          apply hxout
          exact ⟨lt_of_not_ge h₁, lt_of_not_ge h₂, lt_of_not_ge h₃⟩
  · intro h
    rcases h with h₁ | h₂ | h₃
    · rcases hS₁.2 with ⟨x, hxΓ, hmax⟩
      refine ⟨x, hxΓ, ?_⟩
      intro hxinside
      have : H₁ < d₁ := by
        calc
          H₁ = p₁ x := hmax.symm
          _ < d₁ := hxinside.1
      exact (not_lt_of_ge h₁) this
    · rcases hS₂.2 with ⟨x, hxΓ, hmax⟩
      refine ⟨x, hxΓ, ?_⟩
      intro hxinside
      have : H₂ < d₂ := by
        calc
          H₂ = p₂ x := hmax.symm
          _ < d₂ := hxinside.2.1
      exact (not_lt_of_ge h₂) this
    · rcases hS₃.2 with ⟨x, hxΓ, hmax⟩
      refine ⟨x, hxΓ, ?_⟩
      intro hxinside
      have : H₃ < d₃ := by
        calc
          H₃ = p₃ x := hmax.symm
          _ < d₃ := hxinside.2.2
      exact (not_lt_of_ge h₃) this

/-- If the trace contains a point whose projection is zero, its support is nonnegative. -/
theorem support_nonneg_of_zero
    {Γ : Set X} {p : X → ℝ} {H : ℝ}
    (hS : IsAttainedSupport Γ p H)
    (hzero : ∃ x ∈ Γ, p x = 0) :
    0 ≤ H := by
  rcases hzero with ⟨x, hxΓ, hx0⟩
  simpa [hx0] using hS.1 x hxΓ

end TraceSupport

section WeightedSimplex

/-- Weighted sum of a triple. -/
def Weighted3
    (w₁ w₂ w₃ x₁ x₂ x₃ : ℝ) : ℝ :=
  w₁ * x₁ + w₂ * x₂ + w₃ * x₃

/-- The nonnegative weighted distance simplex. -/
def DistanceSimplex3
    (w₁ w₂ w₃ c d₁ d₂ d₃ : ℝ) : Prop :=
  0 ≤ d₁ ∧ 0 ≤ d₂ ∧ 0 ≤ d₃ ∧
    Weighted3 w₁ w₂ w₃ d₁ d₂ d₃ = c

/-- Every admissible distance triple is reached in at least one coordinate. -/
def RobustEscape3
    (w₁ w₂ w₃ c H₁ H₂ H₃ : ℝ) : Prop :=
  ∀ d₁ d₂ d₃,
    DistanceSimplex3 w₁ w₂ w₃ c d₁ d₂ d₃ →
      d₁ ≤ H₁ ∨ d₂ ≤ H₂ ∨ d₃ ≤ H₃

/--
The exact algebraic core: robust coordinatewise escape over a positive weighted
simplex is equivalent to one weighted support inequality.
-/
theorem robustEscape3_iff_weightedSupport
    {w₁ w₂ w₃ c H₁ H₂ H₃ : ℝ}
    (hw₁ : 0 < w₁) (hw₂ : 0 < w₂) (hw₃ : 0 < w₃)
    (hH₁ : 0 ≤ H₁) (hH₂ : 0 ≤ H₂) (hH₃ : 0 ≤ H₃) :
    RobustEscape3 w₁ w₂ w₃ c H₁ H₂ H₃ ↔
      c ≤ Weighted3 w₁ w₂ w₃ H₁ H₂ H₃ := by
  constructor
  · intro hrob
    by_contra hnot
    have hlt : Weighted3 w₁ w₂ w₃ H₁ H₂ H₃ < c :=
      lt_of_not_ge hnot
    let W : ℝ := w₁ + w₂ + w₃
    let δ : ℝ :=
      (c - Weighted3 w₁ w₂ w₃ H₁ H₂ H₃) / W
    have hW : 0 < W := by
      dsimp [W]
      linarith
    have hδ : 0 < δ := by
      dsimp [δ]
      exact div_pos (sub_pos.mpr hlt) hW
    have hδW : δ * W =
        c - Weighted3 w₁ w₂ w₃ H₁ H₂ H₃ := by
      dsimp [δ]
      field_simp [ne_of_gt hW]
    have hadm : DistanceSimplex3 w₁ w₂ w₃ c
        (H₁ + δ) (H₂ + δ) (H₃ + δ) := by
      refine ⟨by linarith, by linarith, by linarith, ?_⟩
      calc
        Weighted3 w₁ w₂ w₃ (H₁ + δ) (H₂ + δ) (H₃ + δ) =
            Weighted3 w₁ w₂ w₃ H₁ H₂ H₃ + δ * W := by
              dsimp [Weighted3, W]
              ring
        _ = Weighted3 w₁ w₂ w₃ H₁ H₂ H₃ +
              (c - Weighted3 w₁ w₂ w₃ H₁ H₂ H₃) := by rw [hδW]
        _ = c := by ring
    rcases hrob (H₁ + δ) (H₂ + δ) (H₃ + δ) hadm with h₁ | h₂ | h₃
    · linarith
    · linarith
    · linarith
  · intro hweighted d₁ d₂ d₃ hd
    rcases hd with ⟨hd₁, hd₂, hd₃, hsum⟩
    by_contra hnone
    push Not at hnone
    have h₁ : w₁ * H₁ < w₁ * d₁ :=
      mul_lt_mul_of_pos_left hnone.1 hw₁
    have h₂ : w₂ * H₂ < w₂ * d₂ :=
      mul_lt_mul_of_pos_left hnone.2.1 hw₂
    have h₃ : w₃ * H₃ < w₃ * d₃ :=
      mul_lt_mul_of_pos_left hnone.2.2 hw₃
    dsimp [Weighted3] at hweighted hsum
    linarith

end WeightedSimplex

section TriangleGeometry

abbrev Point := ℝ × ℝ

/-- Slack from the left supporting line. -/
def slack₁ (α : ℝ) (s : Point) : ℝ :=
  s.1 * Real.sin α - s.2 * Real.cos α

/-- Slack from the right supporting line. -/
def slack₂ (β : ℝ) (s : Point) : ℝ :=
  (1 - s.1) * Real.sin β - s.2 * Real.cos β

/-- Slack from the bottom supporting line. -/
def slack₃ (s : Point) : ℝ := s.2

/-- The normalized triangle, expressed entirely through nonnegative slacks. -/
def InTriangle (α β : ℝ) (s : Point) : Prop :=
  0 ≤ slack₁ α s ∧ 0 ≤ slack₂ β s ∧ 0 ≤ slack₃ s

/-- The weighted slack identity. -/
theorem weighted_slack_identity (α β : ℝ) (s : Point) :
    Real.sin β * slack₁ α s +
      Real.sin α * slack₂ β s +
      Real.sin (α + β) * slack₃ s
      = Real.sin α * Real.sin β := by
  rw [Real.sin_add]
  dsimp [slack₁, slack₂, slack₃]
  ring

/-- Explicit inverse of the first and third slack coordinates. -/
def pointOfSlacks (α d₁ d₃ : ℝ) : Point :=
  ((d₁ + d₃ * Real.cos α) / Real.sin α, d₃)

theorem slack₁_pointOfSlacks
    {α d₁ d₃ : ℝ} (hα : Real.sin α ≠ 0) :
    slack₁ α (pointOfSlacks α d₁ d₃) = d₁ := by
  dsimp [slack₁, pointOfSlacks]
  (field_simp [hα]; ring)

theorem slack₃_pointOfSlacks (α d₁ d₃ : ℝ) :
    slack₃ (pointOfSlacks α d₁ d₃) = d₃ := by
  rfl

theorem slack₂_pointOfSlacks
    {α β d₁ d₂ d₃ : ℝ}
    (hα : Real.sin α ≠ 0)
    (hrel : Real.sin β * d₁ + Real.sin α * d₂ +
      Real.sin (α + β) * d₃ = Real.sin α * Real.sin β) :
    slack₂ β (pointOfSlacks α d₁ d₃) = d₂ := by
  rw [Real.sin_add] at hrel
  dsimp [slack₂, pointOfSlacks]
  field_simp [hα]
  nlinarith [hrel]

/--
Every nonnegative triple satisfying the weighted identity is realized by a
unique triangle point (existence is all that is needed below).
-/
theorem distance_simplex_surjective
    {α β d₁ d₂ d₃ : ℝ}
    (hα : Real.sin α ≠ 0)
    (hd₁ : 0 ≤ d₁) (hd₂ : 0 ≤ d₂) (hd₃ : 0 ≤ d₃)
    (hrel : Real.sin β * d₁ + Real.sin α * d₂ +
      Real.sin (α + β) * d₃ = Real.sin α * Real.sin β) :
    ∃ s : Point,
      InTriangle α β s ∧
      slack₁ α s = d₁ ∧ slack₂ β s = d₂ ∧ slack₃ s = d₃ := by
  let s : Point := pointOfSlacks α d₁ d₃
  have h₁ : slack₁ α s = d₁ := by
    dsimp [s]
    exact slack₁_pointOfSlacks hα
  have h₂ : slack₂ β s = d₂ := by
    dsimp [s]
    exact slack₂_pointOfSlacks hα hrel
  have h₃ : slack₃ s = d₃ := by
    dsimp [s]
    exact slack₃_pointOfSlacks α d₁ d₃
  have hs : InTriangle α β s := by
    refine ⟨?_, ?_, ?_⟩
    · simpa [h₁] using hd₁
    · simpa [h₂] using hd₂
    · simpa [h₃] using hd₃
  exact ⟨s, hs, h₁, h₂, h₃⟩

/-- Positivity of the three triangle weights from the angle hypotheses. -/
theorem triangle_weight_positivity
    {α β : ℝ}
    (hα0 : 0 < α) (hβ0 : 0 < β)
    (hαβπ : α + β < Real.pi) :
    0 < Real.sin β ∧ 0 < Real.sin α ∧ 0 < Real.sin (α + β) := by
  have hαπ : α < Real.pi := by linarith
  have hβπ : β < Real.pi := by linarith
  exact ⟨
    Real.sin_pos_of_pos_of_lt_pi hβ0 hβπ,
    Real.sin_pos_of_pos_of_lt_pi hα0 hαπ,
    Real.sin_pos_of_pos_of_lt_pi (by linarith) hαβπ
  ⟩

end TriangleGeometry

section ExactTriangleTheorem

/-- Every starting point is escaped according to the three support values. -/
def CoversAllTriangleStarts
    (α β H₁ H₂ H₃ : ℝ) : Prop :=
  ∀ s : Point,
    InTriangle α β s →
      slack₁ α s ≤ H₁ ∨ slack₂ β s ≤ H₂ ∨ slack₃ s ≤ H₃

/--
Exact pointwise triangle theorem: coverage of all starting positions is
precisely the weighted support inequality.
-/
theorem coversAllTriangleStarts_iff_weightedSupport
    {α β H₁ H₂ H₃ : ℝ}
    (hα0 : 0 < α) (hβ0 : 0 < β)
    (hαβπ : α + β < Real.pi)
    (hH₁ : 0 ≤ H₁) (hH₂ : 0 ≤ H₂) (hH₃ : 0 ≤ H₃) :
    CoversAllTriangleStarts α β H₁ H₂ H₃ ↔
      Real.sin α * Real.sin β ≤
        Real.sin β * H₁ + Real.sin α * H₂ +
          Real.sin (α + β) * H₃ := by
  rcases triangle_weight_positivity hα0 hβ0 hαβπ with
    ⟨hw₁, hw₂, hw₃⟩
  have hαne : Real.sin α ≠ 0 := ne_of_gt hw₂
  have hstart_iff_distance :
      CoversAllTriangleStarts α β H₁ H₂ H₃ ↔
        RobustEscape3
          (Real.sin β) (Real.sin α) (Real.sin (α + β))
          (Real.sin α * Real.sin β) H₁ H₂ H₃ := by
    constructor
    · intro hstart d₁ d₂ d₃ hd
      rcases hd with ⟨hd₁, hd₂, hd₃, hrel⟩
      rcases distance_simplex_surjective hαne hd₁ hd₂ hd₃ hrel with
        ⟨s, hs, hs₁, hs₂, hs₃⟩
      rcases hstart s hs with h | h | h
      · exact Or.inl (by simpa [hs₁] using h)
      · exact Or.inr (Or.inl (by simpa [hs₂] using h))
      · exact Or.inr (Or.inr (by simpa [hs₃] using h))
    · intro hdist s hs
      apply hdist (slack₁ α s) (slack₂ β s) (slack₃ s)
      exact ⟨hs.1, hs.2.1, hs.2.2, weighted_slack_identity α β s⟩
  rw [hstart_iff_distance]
  simpa [Weighted3] using
    (robustEscape3_iff_weightedSupport hw₁ hw₂ hw₃ hH₁ hH₂ hH₃)

/-- The weighted support expression appearing in the paper. -/
def F (α β : ℝ) (h : ℝ → ℝ) (t : ℝ) : ℝ :=
  Real.sin β * h (t + Real.pi + α) +
    Real.sin α * h (t + Real.pi - β) +
    Real.sin (α + β) * h t

/-- Coverage of all starting points at the orientation phase `t`. -/
def CoversAtPhase (α β : ℝ) (h : ℝ → ℝ) (t : ℝ) : Prop :=
  CoversAllTriangleStarts α β
    (h (t + Real.pi + α))
    (h (t + Real.pi - β))
    (h t)

/--
The exact all-angle support-function characterization.

The theorem is stated on all real phases. Restricting `t` to `[0, 2π)` is
immediate when `h` is `2π`-periodic.
-/
theorem exact_support_function_characterization
    {α β : ℝ} {h : ℝ → ℝ}
    (hα0 : 0 < α) (hβ0 : 0 < β)
    (hαβπ : α + β < Real.pi)
    (h_nonneg : ∀ φ : ℝ, 0 ≤ h φ) :
    (∀ t : ℝ, CoversAtPhase α β h t) ↔
      ∀ t : ℝ, Real.sin α * Real.sin β ≤ F α β h t := by
  constructor
  · intro hcover t
    have ht :=
      (coversAllTriangleStarts_iff_weightedSupport
        hα0 hβ0 hαβπ
        (h_nonneg (t + Real.pi + α))
        (h_nonneg (t + Real.pi - β))
        (h_nonneg t)).mp (hcover t)
    simpa [CoversAtPhase, F] using ht
  · intro hF t
    apply
      (coversAllTriangleStarts_iff_weightedSupport
        hα0 hβ0 hαβπ
        (h_nonneg (t + Real.pi + α))
        (h_nonneg (t + Real.pi - β))
        (h_nonneg t)).mpr
    simpa [F] using hF t

end ExactTriangleTheorem

end TriangleEscape
\end{lstlisting}

The following is the Lean 4 code for Theorem 6 (Convergence of polygonal paths):
\begin{lstlisting}[style=LeanCode]
import Mathlib

open Filter
open scoped Topology

namespace TriangleCovering

noncomputable section

/-!
This file formalizes the algebraic and variational core of the polygonal
recovery argument for triangle-covering paths.

The geometric application uses
  L₁ = sin β / sin (α + β),
  L₂ = sin α / sin (α + β),
  L₃ = 1,
and C = sin α * sin β / sin (α + β).
After multiplication by sin (α + β), the corresponding support certificate
is the one in the paper.
-/


/-! ## Exact triangle identities -/

abbrev Vec2 := ℝ × ℝ

/-- The three side slacks in the normalized half-space representation. -/
def slack₁ (α : ℝ) (s : Vec2) : ℝ :=
  Real.sin α * s.1 - Real.cos α * s.2

def slack₂ (β : ℝ) (s : Vec2) : ℝ :=
  Real.sin β - Real.sin β * s.1 - Real.cos β * s.2

def slack₃ (s : Vec2) : ℝ := s.2

/-- The horizontal component of the edge-normal equilibrium identity. -/
theorem triangle_edge_equilibrium_x
    (α β : ℝ) (hs : Real.sin (α + β) ≠ 0) :
    (Real.sin β / Real.sin (α + β)) * (-Real.sin α) +
      (Real.sin α / Real.sin (α + β)) * Real.sin β = 0 := by
  field_simp [hs]
  ring

/-- The vertical component of the edge-normal equilibrium identity. -/
theorem triangle_edge_equilibrium_y
    (α β : ℝ) (hs : Real.sin (α + β) ≠ 0) :
    (Real.sin β / Real.sin (α + β)) * Real.cos α +
      (Real.sin α / Real.sin (α + β)) * Real.cos β - 1 = 0 := by
  field_simp [hs]
  rw [Real.sin_add]
  ring

/--
The weighted slack sum is independent of the starting point.  It equals twice
the area of the unit-base triangle.
-/
theorem triangle_weighted_slack_constant
    (α β : ℝ) (s : Vec2) (hs : Real.sin (α + β) ≠ 0) :
    (Real.sin β / Real.sin (α + β)) * slack₁ α s +
      (Real.sin α / Real.sin (α + β)) * slack₂ β s +
      slack₃ s =
        Real.sin α * Real.sin β / Real.sin (α + β) := by
  field_simp [hs]
  simp [slack₁, slack₂, slack₃, Real.sin_add]
  ring


/-- Membership in the triangle, expressed entirely by nonnegative side slacks. -/
def InTriangle (α β : ℝ) (s : Vec2) : Prop :=
  0 ≤ slack₁ α s ∧ 0 ≤ slack₂ β s ∧ 0 ≤ slack₃ s

/--
Every nonnegative point of the weighted slack simplex is generated by a point
of the triangle.  This is the missing surjectivity needed for necessity of the
single weighted support inequality.
-/
theorem triangle_slack_simplex_surjective
    {α β d₁ d₂ d₃ : ℝ}
    (hα : Real.sin α ≠ 0) (hαβ : Real.sin (α + β) ≠ 0)
    (hd₁ : 0 ≤ d₁) (hd₂ : 0 ≤ d₂) (hd₃ : 0 ≤ d₃)
    (hsum :
      (Real.sin β / Real.sin (α + β)) * d₁ +
        (Real.sin α / Real.sin (α + β)) * d₂ + d₃ =
          Real.sin α * Real.sin β / Real.sin (α + β)) :
    ∃ s : Vec2,
      InTriangle α β s ∧
      slack₁ α s = d₁ ∧ slack₂ β s = d₂ ∧ slack₃ s = d₃ := by
  let s : Vec2 :=
    ((d₁ + Real.cos α * d₃) / Real.sin α, d₃)
  have hs₁ : slack₁ α s = d₁ := by
    dsimp [s, slack₁]
    field_simp [hα]
    ring
  have hsum' := hsum
  field_simp [hαβ] at hsum'
  rw [Real.sin_add] at hsum'
  have hs₂ : slack₂ β s = d₂ := by
    dsimp [s, slack₂]
    field_simp [hα]
    nlinarith
  have hs₃ : slack₃ s = d₃ := by
    rfl
  refine ⟨s, ?_, hs₁, hs₂, hs₃⟩
  exact ⟨by simpa [hs₁] using hd₁, by simpa [hs₂] using hd₂,
    by simpa [hs₃] using hd₃⟩

/-- Robust domination of every nonnegative slack vector on a weighted simplex. -/
def RobustSlack
    (L₁ L₂ L₃ C h₁ h₂ h₃ : ℝ) : Prop :=
  ∀ d₁ d₂ d₃ : ℝ,
    0 ≤ d₁ → 0 ≤ d₂ → 0 ≤ d₃ →
    L₁ * d₁ + L₂ * d₂ + L₃ * d₃ = C →
    d₁ ≤ h₁ ∨ d₂ ≤ h₂ ∨ d₃ ≤ h₃

/--
The exact finite-dimensional certificate behind the triangle support formula:
for positive weights and nonnegative supports, robust domination of every
slack vector is equivalent to one weighted support inequality.
-/
theorem robustSlack_iff_weighted
    {L₁ L₂ L₃ C h₁ h₂ h₃ : ℝ}
    (hL₁ : 0 < L₁) (hL₂ : 0 < L₂) (hL₃ : 0 < L₃)
    (hh₁ : 0 ≤ h₁) (hh₂ : 0 ≤ h₂) (hh₃ : 0 ≤ h₃) :
    RobustSlack L₁ L₂ L₃ C h₁ h₂ h₃ ↔
      C ≤ L₁ * h₁ + L₂ * h₂ + L₃ * h₃ := by
  constructor
  · intro hrob
    let H : ℝ := L₁ * h₁ + L₂ * h₂ + L₃ * h₃
    let S : ℝ := L₁ + L₂ + L₃
    change C ≤ H
    by_contra hnot
    have hHC : H < C := lt_of_not_ge hnot
    have hS : 0 < S := by
      dsimp [S]
      linarith
    let e : ℝ := (C - H) / S
    have he : 0 < e := by
      dsimp [e]
      exact div_pos (sub_pos.mpr hHC) hS
    have heS : e * S = C - H := by
      dsimp [e]
      field_simp [ne_of_gt hS]
    have hsum :
        L₁ * (h₁ + e) + L₂ * (h₂ + e) + L₃ * (h₃ + e) = C := by
      calc
        L₁ * (h₁ + e) + L₂ * (h₂ + e) + L₃ * (h₃ + e)
            = H + e * S := by
                dsimp [H, S]
                ring
        _ = C := by linarith
    have hor := hrob (h₁ + e) (h₂ + e) (h₃ + e)
      (by linarith) (by linarith) (by linarith) hsum
    rcases hor with h | h | h <;> linarith
  · intro hweighted d₁ d₂ d₃ hd₁ hd₂ hd₃ hsum
    by_contra hnot
    have h₁lt : h₁ < d₁ := lt_of_not_ge (by
      intro h
      exact hnot (Or.inl h))
    have h₂lt : h₂ < d₂ := lt_of_not_ge (by
      intro h
      exact hnot (Or.inr (Or.inl h)))
    have h₃lt : h₃ < d₃ := lt_of_not_ge (by
      intro h
      exact hnot (Or.inr (Or.inr h)))
    have hm₁ : L₁ * h₁ < L₁ * d₁ := mul_lt_mul_of_pos_left h₁lt hL₁
    have hm₂ : L₂ * h₂ < L₂ * d₂ := mul_lt_mul_of_pos_left h₂lt hL₂
    have hm₃ : L₃ * h₃ < L₃ * d₃ := mul_lt_mul_of_pos_left h₃lt hL₃
    linarith


/-! ## Support as a supremum -/

/-- Supremal support of a scalar projection family. -/
noncomputable def supportSup {ι : Type*} (f : ι → ℝ) : ℝ :=
  sSup (Set.range f)

/-- A based path has nonnegative support because the zero projection is present. -/
theorem supportSup_nonneg
    {ι : Type*} {f : ι → ℝ} (i₀ : ι)
    (hf : BddAbove (Set.range f)) (hzero : f i₀ = 0) :
    0 ≤ supportSup f := by
  rw [← hzero]
  exact le_csSup hf ⟨i₀, rfl⟩

/--
Taking a supremum is 1-Lipschitz for the uniform norm.  This is the precise
order-theoretic core of support-function stability.
-/
theorem supportSup_error
    {ι : Type*} {f g : ι → ℝ} {ε : ℝ} (i₀ : ι)
    (hf : BddAbove (Set.range f)) (hg : BddAbove (Set.range g))
    (herr : ∀ i, |f i - g i| ≤ ε) :
    |supportSup f - supportSup g| ≤ ε := by
  have hfg : supportSup f ≤ supportSup g + ε := by
    apply csSup_le
    · exact ⟨f i₀, ⟨i₀, rfl⟩⟩
    · intro b hb
      rcases hb with ⟨i, rfl⟩
      have hgi : g i ≤ supportSup g := le_csSup hg ⟨i, rfl⟩
      have hdiff : f i ≤ g i + ε := by
        have haux := (abs_le.mp (herr i)).2
        linarith
      linarith
  have hgf : supportSup g ≤ supportSup f + ε := by
    apply csSup_le
    · exact ⟨g i₀, ⟨i₀, rfl⟩⟩
    · intro b hb
      rcases hb with ⟨i, rfl⟩
      have hfi : f i ≤ supportSup f := le_csSup hf ⟨i, rfl⟩
      have hdiff : g i ≤ f i + ε := by
        have haux := (abs_le.mp (herr i)).1
        linarith
      linarith
  rw [abs_le]
  constructor <;> linarith

/-- Three-term weighted support functional. -/
def weightedCover
    (w₁ w₂ w₃ : ℝ) (h : ℝ → ℝ) (φ₁ φ₂ φ₃ : ℝ) : ℝ :=
  w₁ * h φ₁ + w₂ * h φ₂ + w₃ * h φ₃

/--
A pointwise support error of at most `ε` produces a weighted-cover deficit of
at most `(w₁ + w₂ + w₃) ε`.
-/
theorem weightedCover_lower_of_pointwise_error
    {w₁ w₂ w₃ c ε : ℝ} {h g : ℝ → ℝ} {φ₁ φ₂ φ₃ : ℝ}
    (hw₁ : 0 ≤ w₁) (hw₂ : 0 ≤ w₂) (hw₃ : 0 ≤ w₃)
    (herr : ∀ φ, |h φ - g φ| ≤ ε)
    (hg : c ≤ weightedCover w₁ w₂ w₃ g φ₁ φ₂ φ₃) :
    c - (w₁ + w₂ + w₃) * ε ≤
      weightedCover w₁ w₂ w₃ h φ₁ φ₂ φ₃ := by
  have hφ₁ : g φ₁ - ε ≤ h φ₁ := by
    have haux := (abs_le.mp (herr φ₁)).1
    linarith
  have hφ₂ : g φ₂ - ε ≤ h φ₂ := by
    have haux := (abs_le.mp (herr φ₂)).1
    linarith
  have hφ₃ : g φ₃ - ε ≤ h φ₃ := by
    have haux := (abs_le.mp (herr φ₃)).1
    linarith
  have hm₁ : w₁ * (g φ₁ - ε) ≤ w₁ * h φ₁ :=
    mul_le_mul_of_nonneg_left hφ₁ hw₁
  have hm₂ : w₂ * (g φ₂ - ε) ≤ w₂ * h φ₂ :=
    mul_le_mul_of_nonneg_left hφ₂ hw₂
  have hm₃ : w₃ * (g φ₃ - ε) ≤ w₃ * h φ₃ :=
    mul_le_mul_of_nonneg_left hφ₃ hw₃
  calc
    c - (w₁ + w₂ + w₃) * ε
        ≤ weightedCover w₁ w₂ w₃ g φ₁ φ₂ φ₃ -
            (w₁ + w₂ + w₃) * ε := sub_le_sub_right hg _
    _ = w₁ * (g φ₁ - ε) + w₂ * (g φ₂ - ε) +
          w₃ * (g φ₃ - ε) := by
          simp [weightedCover]
          ring
    _ ≤ w₁ * h φ₁ + w₂ * h φ₂ + w₃ * h φ₃ :=
          add_le_add (add_le_add hm₁ hm₂) hm₃
    _ = weightedCover w₁ w₂ w₃ h φ₁ φ₂ φ₃ := rfl

/-- Positive homogeneity of the weighted support functional. -/
theorem weightedCover_scale
    (w₁ w₂ w₃ ρ : ℝ) (h : ℝ → ℝ) (φ₁ φ₂ φ₃ : ℝ) :
    weightedCover w₁ w₂ w₃ (fun φ => ρ * h φ) φ₁ φ₂ φ₃ =
      ρ * weightedCover w₁ w₂ w₃ h φ₁ φ₂ φ₃ := by
  simp [weightedCover]
  ring

/-- Scaling by `c / (c - δ)` repairs a uniform deficit `δ < c`. -/
theorem scale_repairs_deficit
    {c δ z : ℝ} (hc : 0 < c) (hδc : δ < c) (hz : c - δ ≤ z) :
    c ≤ (c / (c - δ)) * z := by
  have hden : 0 < c - δ := sub_pos.mpr hδc
  have hρ : 0 ≤ c / (c - δ) := (div_pos hc hden).le
  calc
    c = (c / (c - δ)) * (c - δ) := by
      field_simp [ne_of_gt hden]
    _ ≤ (c / (c - δ)) * z :=
      mul_le_mul_of_nonneg_left hz hρ

/--
The scalar recovery lemma used in the polygonal approximation proof.
It combines the support-error estimate, homogeneity, and the repair factor.
-/
theorem scaled_weighted_cover_feasible
    {w₁ w₂ w₃ c ε : ℝ} {h g : ℝ → ℝ} {φ₁ φ₂ φ₃ : ℝ}
    (hw₁ : 0 ≤ w₁) (hw₂ : 0 ≤ w₂) (hw₃ : 0 ≤ w₃)
    (hc : 0 < c)
    (hsmall : (w₁ + w₂ + w₃) * ε < c)
    (herr : ∀ φ, |h φ - g φ| ≤ ε)
    (hg : c ≤ weightedCover w₁ w₂ w₃ g φ₁ φ₂ φ₃) :
    c ≤ weightedCover w₁ w₂ w₃
      (fun φ => (c / (c - (w₁ + w₂ + w₃) * ε)) * h φ)
      φ₁ φ₂ φ₃ := by
  let δ : ℝ := (w₁ + w₂ + w₃) * ε
  have happrox : c - δ ≤ weightedCover w₁ w₂ w₃ h φ₁ φ₂ φ₃ := by
    dsimp [δ]
    exact weightedCover_lower_of_pointwise_error hw₁ hw₂ hw₃ herr hg
  have hscaled :
      c ≤ (c / (c - δ)) * weightedCover w₁ w₂ w₃ h φ₁ φ₂ φ₃ :=
    scale_repairs_deficit hc (by simpa [δ] using hsmall) happrox
  rw [weightedCover_scale]
  simpa [δ] using hscaled

/--
Abstract squeeze theorem for optimal values.  In the geometric application,
`v ≤ vK K` comes from inclusion of polygonal paths in all rectifiable paths,
and `vK K ≤ ρ K * v` comes from the scaled polygonal recovery sequence.
-/
theorem optimal_values_tendsto
    {v : ℝ} {vK ρ : ℕ → ℝ}
    (hlower : ∀ K, v ≤ vK K)
    (hupper : ∀ᶠ K in atTop, vK K ≤ ρ K * v)
    (hρ : Tendsto ρ atTop (𝓝 1)) :
    Tendsto vK atTop (𝓝 v) := by
  have hupperT : Tendsto (fun K => ρ K * v) atTop (𝓝 v) := by
    simpa using hρ.mul_const v
  exact tendsto_of_tendsto_of_tendsto_of_le_of_le'
    tendsto_const_nhds hupperT (Eventually.of_forall hlower) hupper

/--
Abstract compactness/lower-semicontinuity endpoint.  The geometric application
instantiates `Kset` with a uniformly bounded, equi-Lipschitz family after
constant-speed reparameterization.
-/
theorem compact_subsequence_is_optimal
    {Path : Type*} [TopologicalSpace Path] [FirstCountableTopology Path]
    {A Kset : Set Path} {cost : Path → ℝ} {v : ℝ} {p : ℕ → Path}
    (hcompact : IsCompact Kset)
    (hpK : ∀ n, p n ∈ Kset)
    (hfeasible_limit :
      ∀ (x : Path) (φ : ℕ → ℕ),
        StrictMono φ → Tendsto (p ∘ φ) atTop (𝓝 x) → x ∈ A)
    (hlsc_limit :
      ∀ (x : Path) (φ : ℕ → ℕ),
        StrictMono φ → Tendsto (p ∘ φ) atTop (𝓝 x) → cost x ≤ v)
    (hlower : ∀ x ∈ A, v ≤ cost x) :
    ∃ x ∈ A, cost x = v ∧
      ∃ φ : ℕ → ℕ, StrictMono φ ∧ Tendsto (p ∘ φ) atTop (𝓝 x) := by
  obtain ⟨x, hxK, φ, hφ, hconv⟩ :=
    hcompact.tendsto_subseq (x := p) hpK
  have hxA : x ∈ A := hfeasible_limit x φ hφ hconv
  have hcost_le : cost x ≤ v := hlsc_limit x φ hφ hconv
  have hcost_eq : cost x = v := le_antisymm hcost_le (hlower x hxA)
  exact ⟨x, hxA, hcost_eq, φ, hφ, hconv⟩

/-- Mathlib's lower semicontinuity theorem for total variation under uniform convergence. -/
theorem variation_lowerSemicontinuous_uniform
    {ι E : Type*} [LinearOrder ι] [PseudoEMetricSpace E] (s : Set ι) :
    LowerSemicontinuous
      (fun f : UniformOnFun ι E {s} => eVariationOn f s) :=
  eVariationOn.lowerSemicontinuous_uniformOn s

end

end TriangleCovering
\end{lstlisting}

The following is the Lean 4 code for Lemma 7 and Theorem 8 (Convergence of the fully discrete formulation):
\begin{lstlisting}[style=LeanCode]
import Mathlib

/-!
# Triangle covering curves: formal core

This file is `sorry`-free and introduces no axioms. It formalizes:

1. the scaled equilibrium identity for the three triangle normals;
2. the weighted side-slack identity;
3. the weighted certificate for all nonnegative slack triples;
4. the intermediate-value boundary-crossing lemma;
5. stability of an attained support maximum;
6. the weighted angular Lipschitz estimate;
7. the collocation-error estimate;
8. repair of an approximate support constraint by dilation;
9. fixed-complexity and joint convergence by quantitative squeezing.
-/

noncomputable section

open Filter Set

namespace TriangleCovering

abbrev Vec2 := ℝ × ℝ

def dot (u v : Vec2) : ℝ :=
  u.1 * v.1 + u.2 * v.2

def n₁ (α : ℝ) : Vec2 :=
  (-Real.sin α, Real.cos α)

def n₂ (β : ℝ) : Vec2 :=
  (Real.sin β, Real.cos β)

def n₃ : Vec2 :=
  (0, -1)

def slack₁ (α : ℝ) (s : Vec2) : ℝ :=
  -dot (n₁ α) s

def slack₂ (β : ℝ) (s : Vec2) : ℝ :=
  Real.sin β - dot (n₂ β) s

def slack₃ (s : Vec2) : ℝ :=
  -dot n₃ s

/--
The denominator-free equilibrium relation for the three outward normals.
-/
theorem scaled_normal_equilibrium (α β : ℝ) :
    Real.sin β • n₁ α +
        Real.sin α • n₂ β +
        Real.sin (α + β) • n₃ =
      (0 : Vec2) := by
  ext <;>
    simp [n₁, n₂, n₃, Real.sin_add] <;>
    ring

/--
The weighted sum of the three side slacks is independent of the point.
-/
theorem weighted_slack_identity (α β : ℝ) (s : Vec2) :
    Real.sin β * slack₁ α s +
          Real.sin α * slack₂ β s +
          Real.sin (α + β) * slack₃ s =
        Real.sin α * Real.sin β := by
  rcases s with ⟨x, y⟩
  simp [
    slack₁,
    slack₂,
    slack₃,
    dot,
    n₁,
    n₂,
    n₃,
    Real.sin_add
  ]
  ring

/--
A continuous scalar projection that starts below a level and later reaches
or exceeds it must attain that level.
-/
theorem reaches_level_of_starts_below
    {f : ℝ → ℝ} {d : ℝ}
    (hf : ContinuousOn f (Icc (0 : ℝ) 1))
    (h0 : f 0 ≤ d)
    (hp : ∃ p ∈ Icc (0 : ℝ) 1, d ≤ f p) :
    ∃ q ∈ Icc (0 : ℝ) 1, f q = d := by
  rcases hp with ⟨p, hp, hdp⟩
  have hsub :
      Icc (0 : ℝ) p ⊆ Icc (0 : ℝ) 1 := by
    intro x hx
    exact ⟨hx.1, hx.2.trans hp.2⟩
  have hfp :
      ContinuousOn f (Icc (0 : ℝ) p) :=
    hf.mono hsub
  have hdmem :
      d ∈ Icc (f 0) (f p) :=
    ⟨h0, hdp⟩
  have himg :
      d ∈ f '' Icc (0 : ℝ) p :=
    (intermediate_value_Icc hp.1 hfp) hdmem
  rcases himg with ⟨q, hq, hfq⟩
  exact ⟨q, hsub hq, hfq⟩

/--
Robust coverage of all nonnegative slack triples on a weighted affine
simplex.
-/
def CoversAllSlacks
    (w₁ w₂ w₃ c h₁ h₂ h₃ : ℝ) : Prop :=
  ∀ d₁ d₂ d₃ : ℝ,
    0 ≤ d₁ →
    0 ≤ d₂ →
    0 ≤ d₃ →
    w₁ * d₁ + w₂ * d₂ + w₃ * d₃ = c →
    d₁ ≤ h₁ ∨ d₂ ≤ h₂ ∨ d₃ ≤ h₃

/--
The weighted support inequality is equivalent to coverage of every
nonnegative slack triple with the prescribed weighted sum.
-/
theorem weighted_certificate_iff
    {w₁ w₂ w₃ c h₁ h₂ h₃ : ℝ}
    (hw₁ : 0 < w₁)
    (hw₂ : 0 < w₂)
    (hw₃ : 0 < w₃)
    (hh₁ : 0 ≤ h₁)
    (hh₂ : 0 ≤ h₂)
    (hh₃ : 0 ≤ h₃) :
    c ≤ w₁ * h₁ + w₂ * h₂ + w₃ * h₃ ↔
      CoversAllSlacks w₁ w₂ w₃ c h₁ h₂ h₃ := by
  constructor
  · intro hcert d₁ d₂ d₃ hd₁ hd₂ hd₃ hsum
    by_contra hnone
    have hn₁ : ¬d₁ ≤ h₁ := by
      intro h
      exact hnone (Or.inl h)
    have hn₂ : ¬d₂ ≤ h₂ := by
      intro h
      exact hnone (Or.inr (Or.inl h))
    have hn₃ : ¬d₃ ≤ h₃ := by
      intro h
      exact hnone (Or.inr (Or.inr h))
    have h₁d₁ : h₁ < d₁ :=
      lt_of_not_ge hn₁
    have h₂d₂ : h₂ < d₂ :=
      lt_of_not_ge hn₂
    have h₃d₃ : h₃ < d₃ :=
      lt_of_not_ge hn₃
    have hmul₁ :
        w₁ * h₁ < w₁ * d₁ :=
      mul_lt_mul_of_pos_left h₁d₁ hw₁
    have hmul₂ :
        w₂ * h₂ < w₂ * d₂ :=
      mul_lt_mul_of_pos_left h₂d₂ hw₂
    have hmul₃ :
        w₃ * h₃ < w₃ * d₃ :=
      mul_lt_mul_of_pos_left h₃d₃ hw₃
    linarith
  · intro hrobust
    by_contra hnot
    let H : ℝ :=
      w₁ * h₁ + w₂ * h₂ + w₃ * h₃
    have hltH : H < c := by
      dsimp [H]
      exact lt_of_not_ge hnot
    let W : ℝ :=
      w₁ + w₂ + w₃
    have hW : 0 < W := by
      dsimp [W]
      linarith
    let e : ℝ :=
      (c - H) / W
    have he : 0 < e := by
      dsimp [e]
      exact div_pos (sub_pos.mpr hltH) hW
    have heW :
        e * W = c - H := by
      dsimp [e]
      exact div_mul_cancel₀ (c - H) (ne_of_gt hW)
    have hd₁ :
        0 ≤ h₁ + e := by
      linarith
    have hd₂ :
        0 ≤ h₂ + e := by
      linarith
    have hd₃ :
        0 ≤ h₃ + e := by
      linarith
    have hsum :
        w₁ * (h₁ + e) +
              w₂ * (h₂ + e) +
              w₃ * (h₃ + e) =
            c := by
      calc
        w₁ * (h₁ + e) +
              w₂ * (h₂ + e) +
              w₃ * (h₃ + e) =
            H + e * W := by
              dsimp [H, W]
              ring
        _ = H + (c - H) := by
              rw [heW]
        _ = c := by
              ring
    rcases
        hrobust
          (h₁ + e)
          (h₂ + e)
          (h₃ + e)
          hd₁
          hd₂
          hd₃
          hsum
      with h | h | h <;>
      linarith

/--
If a pointwise family has an attained maximum and every member is
`B`-Lipschitz, then the maximum is `B`-Lipschitz.
-/
theorem attained_support_lipschitz
    {ι Θ : Type*}
    [PseudoMetricSpace Θ]
    (proj : ι → Θ → ℝ)
    (h : Θ → ℝ)
    (B : ℝ)
    (hupper : ∀ i t, proj i t ≤ h t)
    (hattain : ∀ t, ∃ i, h t = proj i t)
    (hproj :
      ∀ i t s,
        |proj i t - proj i s| ≤ B * dist t s)
    (t s : Θ) :
    |h t - h s| ≤ B * dist t s := by
  rcases hattain t with ⟨i, hi⟩
  rcases hattain s with ⟨j, hj⟩
  have hts :
      h t - h s ≤ B * dist t s := by
    calc
      h t - h s =
          proj i t - h s := by
            rw [hi]
      _ ≤ proj i t - proj i s :=
        sub_le_sub_left (hupper i s) _
      _ ≤ |proj i t - proj i s| :=
        le_abs_self _
      _ ≤ B * dist t s :=
        hproj i t s
  have hst' :
      h s - h t ≤ B * dist s t := by
    calc
      h s - h t =
          proj j s - h t := by
            rw [hj]
      _ ≤ proj j s - proj j t :=
        sub_le_sub_left (hupper j t) _
      _ ≤ |proj j s - proj j t| :=
        le_abs_self _
      _ ≤ B * dist s t :=
        hproj j s t
  have hst :
      h s - h t ≤ B * dist t s := by
    simpa [dist_comm] using hst'
  apply abs_le.mpr
  constructor
  · linarith
  · exact hts

/--
Weighted sum of three `B`-Lipschitz support terms.
-/
theorem weighted_support_lipschitz
    {Θ : Type*}
    [PseudoMetricSpace Θ]
    (h₁ h₂ h₃ : Θ → ℝ)
    (w₁ w₂ w₃ B : ℝ)
    (hw₁ : 0 ≤ w₁)
    (hw₂ : 0 ≤ w₂)
    (hw₃ : 0 ≤ w₃)
    (hlip₁ :
      ∀ t s,
        |h₁ t - h₁ s| ≤ B * dist t s)
    (hlip₂ :
      ∀ t s,
        |h₂ t - h₂ s| ≤ B * dist t s)
    (hlip₃ :
      ∀ t s,
        |h₃ t - h₃ s| ≤ B * dist t s)
    (t s : Θ) :
    |(w₁ * h₁ t + w₂ * h₂ t + w₃ * h₃ t) -
        (w₁ * h₁ s + w₂ * h₂ s + w₃ * h₃ s)|
      ≤
        (w₁ + w₂ + w₃) * B * dist t s := by
  let a : ℝ :=
    w₁ * (h₁ t - h₁ s)
  let b : ℝ :=
    w₂ * (h₂ t - h₂ s)
  let c : ℝ :=
    w₃ * (h₃ t - h₃ s)
  have hre :
      (w₁ * h₁ t + w₂ * h₂ t + w₃ * h₃ t) -
          (w₁ * h₁ s + w₂ * h₂ s + w₃ * h₃ s) =
        a + b + c := by
    dsimp [a, b, c]
    ring
  rw [hre]
  have hab :
      |a + b| ≤ |a| + |b| :=
    abs_add_le a b
  have habc :
      |a + b + c| ≤ |a + b| + |c| :=
    abs_add_le (a + b) c
  have htri :
      |a + b + c| ≤ |a| + |b| + |c| := by
    calc
      |a + b + c|
          ≤ |a + b| + |c| :=
        habc
      _ ≤ (|a| + |b|) + |c| :=
        add_le_add_left hab |c|
  have ha :
      |a| ≤ w₁ * (B * dist t s) := by
    dsimp [a]
    rw [abs_mul, abs_of_nonneg hw₁]
    exact
      mul_le_mul_of_nonneg_left
        (hlip₁ t s)
        hw₁
  have hb :
      |b| ≤ w₂ * (B * dist t s) := by
    dsimp [b]
    rw [abs_mul, abs_of_nonneg hw₂]
    exact
      mul_le_mul_of_nonneg_left
        (hlip₂ t s)
        hw₂
  have hc :
      |c| ≤ w₃ * (B * dist t s) := by
    dsimp [c]
    rw [abs_mul, abs_of_nonneg hw₃]
    exact
      mul_le_mul_of_nonneg_left
        (hlip₃ t s)
        hw₃
  calc
    |a + b + c|
        ≤ |a| + |b| + |c| :=
      htri
    _ ≤
        w₁ * (B * dist t s) +
          w₂ * (B * dist t s) +
          w₃ * (B * dist t s) := by
      linarith
    _ =
        (w₁ + w₂ + w₃) * B * dist t s := by
      ring

/--
Generic collocation estimate.

For a uniform `M`-point circular grid, instantiate
`δ = Real.pi / M`.
-/
theorem collocation_error
    {Θ ι : Type*}
    [PseudoMetricSpace Θ]
    (F : Θ → ℝ)
    (grid : ι → Θ)
    (c L δ : ℝ)
    (hL : 0 ≤ L)
    (hnear :
      ∀ t,
        ∃ j,
          dist t (grid j) ≤ δ)
    (hLip :
      ∀ t s,
        |F t - F s| ≤ L * dist t s)
    (hcoll :
      ∀ j,
        c ≤ F (grid j)) :
    ∀ t,
      c - L * δ ≤ F t := by
  intro t
  rcases hnear t with ⟨j, hj⟩
  have hdist :
      L * dist (grid j) t ≤ L * δ := by
    apply mul_le_mul_of_nonneg_left _ hL
    simpa [dist_comm] using hj
  have habs :
      |F (grid j) - F t| ≤ L * δ :=
    (hLip (grid j) t).trans hdist
  have hdiff :
      F (grid j) - F t ≤ L * δ :=
    (le_abs_self (F (grid j) - F t)).trans habs
  linarith [hcoll j]

/--
Dilation repairs a uniform deficit `δ < c`.
-/
theorem dilation_repairs_constraint
    {c δ F : ℝ}
    (hc : 0 < c)
    (hδ : δ < c)
    (hF : c - δ ≤ F) :
    c ≤ (c / (c - δ)) * F := by
  have hden :
      0 < c - δ :=
    sub_pos.mpr hδ
  have hfac :
      0 ≤ c / (c - δ) :=
    div_nonneg
      (le_of_lt hc)
      (le_of_lt hden)
  calc
    c =
        (c / (c - δ)) * (c - δ) := by
      rw [div_mul_cancel₀ c (ne_of_gt hden)]
    _ ≤
        (c / (c - δ)) * F :=
      mul_le_mul_of_nonneg_left hF hfac

/--
Fixed-complexity convergence by a quantitative squeeze estimate.
-/
theorem fixedK_value_convergence
    (vK : ℝ)
    (vKM : ℕ → ℝ)
    (a : ℝ)
    (hlower :
      ∀ᶠ M : ℕ in atTop,
        (1 - a / (M : ℝ)) * vK ≤ vKM M)
    (hupper :
      ∀ᶠ M : ℕ in atTop,
        vKM M ≤ vK) :
    Tendsto vKM atTop (nhds vK) := by
  have hzero :
      Tendsto
        (fun M : ℕ => a / (M : ℝ))
        atTop
        (nhds 0) :=
    tendsto_const_div_atTop_nhds_zero_nat a
  have hone :
      Tendsto
        (fun _ : ℕ => (1 : ℝ))
        atTop
        (nhds 1) :=
    tendsto_const_nhds
  have hsub :
      Tendsto
        (fun M : ℕ => 1 - a / (M : ℝ))
        atTop
        (nhds 1) := by
    simpa using hone.sub hzero
  have hvKconst :
      Tendsto
        (fun _ : ℕ => vK)
        atTop
        (nhds vK) :=
    tendsto_const_nhds
  have hlow :
      Tendsto
        (fun M : ℕ =>
          (1 - a / (M : ℝ)) * vK)
        atTop
        (nhds vK) := by
    simpa using hsub.mul hvKconst
  exact
    tendsto_of_tendsto_of_tendsto_of_le_of_le'
      hlow
      hvKconst
      hlower
      hupper

/--
Joint convergence.

`vK n` is the exact polygonal value, `vKM n` is the collocation
value, and `err n` is the multiplicative repair loss.
-/
theorem joint_value_convergence
    (v : ℝ)
    (vK vKM err : ℕ → ℝ)
    (hvK :
      Tendsto vK atTop (nhds v))
    (herr :
      Tendsto err atTop (nhds 0))
    (hlower :
      ∀ᶠ n : ℕ in atTop,
        (1 - err n) * v ≤ vKM n)
    (hupper :
      ∀ᶠ n : ℕ in atTop,
        vKM n ≤ vK n) :
    Tendsto vKM atTop (nhds v) := by
  have hone :
      Tendsto
        (fun _ : ℕ => (1 : ℝ))
        atTop
        (nhds 1) :=
    tendsto_const_nhds
  have hsub :
      Tendsto
        (fun n : ℕ => 1 - err n)
        atTop
        (nhds 1) := by
    simpa using hone.sub herr
  have hvconst :
      Tendsto
        (fun _ : ℕ => v)
        atTop
        (nhds v) :=
    tendsto_const_nhds
  have hlow :
      Tendsto
        (fun n : ℕ => (1 - err n) * v)
        atTop
        (nhds v) := by
    simpa using hsub.mul hvconst
  exact
    tendsto_of_tendsto_of_tendsto_of_le_of_le'
      hlow
      hvK
      hlower
      hupper

end TriangleCovering
\end{lstlisting}

\vspace{1em}
\textbf{AI usage disclosure:} The language of this paper was polished by GPT-5.6 sol and Gemini 3.1 pro. The formalized proofs in Lean 4 code was assisted by GPT-5.6 sol. The AI tools were not used to generate substantive content and construct logical arguments. All intellectual contributions, theoretical frameworks, and formulas are original work of the author.

~\\
College of Engineering and Computer Science, University of Central Florida, Orlando, FL, USA

Email: \underline{zhipeng.deng@ucf.edu}

\end{document}